\newtheorem{theorem}{Theorem}[section]
\newtheorem{proposition}{Proposition}[section]
\newtheorem{corollary}{Corollary}[section]
\newtheorem{lemma}{Lemma}[section]
\newtheorem{conjecture}{Conjecture}[section]
\theoremstyle{definition}
\newtheorem{definition}{Definition}[section]
\newtheorem{remark}{Remark}[section]
\numberwithin{equation}{section}
\begin{document}

%\linenumbers

\title{Successive approximation of \(p\)-class towers}
%\title{\(p\)-Class tower groups with dense population}

\author{Daniel C. Mayer}
\address{Naglergasse 53 \\ 8010 Graz \\ Austria}
\email{algebraic.number.theory@algebra.at}
\urladdr{http://www.algebra.at}

\thanks{Research supported by the Austrian Science Fund (FWF): P 26008-N25}

\subjclass[2010]{Primary 11R37, 11R34, 11R29, 11R11, 11R20; secondary 20D15, 20F05, 20F12, 20F14, 20-04}
\keywords{\(p\)-Class towers, Galois groups, second \(p\)-class groups, abelian type invariants of \(p\)-class groups, \(p\)-transfer kernel types,
Artin limit pattern, quadratic fields, unramified cyclic extensions of degree \(p\), dihedral fields of degree \(2p\);
finite \(p\)-groups, maximal nilpotency class, maximal subgroups, polycyclic pc-presentations, commutator calculus, central series}

\date{October 11, 2017}

%--------------------------------------------------------------------------------

\begin{abstract}
Let \(F\) be a number field and \(p\) be a prime.
In the successive approximation theorem, we prove that,
for each integer \(n\ge 1\), finitely many candidates for
the Galois group \(\mathrm{G}_p^n{F}\) of the \(n\)th stage \(F_p^{(n)}\)
of the \(p\)-class tower \(F_p^{(\infty)}\) over \(F\)
are determined by abelian type invariants of \(p\)-class groups
\(\mathrm{Cl}_p{E}\) of unramified extensions \(E/F\) with degree \(\lbrack E:F\rbrack=p^{n-1}\).
Illustrated by the most extensive numerical results available currently,
the transfer kernels \(\ker(T_{F,E})\) of the \(p\)-class extensions
\(T_{F,E}:\mathrm{Cl}_p{F}\to\mathrm{Cl}_p{E}\)
from \(F\) to unramified cyclic degree-\(p\) extensions \(E/F\)
are shown to be capable of narrowing down the number of contestants significantly.
By determining the isomorphism type of the maximal subgroups \(S<G\)
of all \(3\)-groups \(G\) with coclass \(\mathrm{cc}(G)=1\),
and establishing a general theorem on the connection between the \(p\)-class towers
of a number field \(F\) and of an unramified abelian \(p\)-extensions \(E/F\),
we are able to provide a theoretical proof of the realization
of certain \(3\)-groups \(S\) with maximal class by \(3\)-tower groups \(\mathrm{G}_3^\infty{E}\)
of dihedral fields \(E\) with degree \(6\),
which could not be realized up to now.
\end{abstract}

\maketitle

%\newpage

%--------------------------------------------------------------------------------

\section{Introduction}
\label{s:Intro}
\noindent
For a prime number \(p\) and an algebraic number field \(F\),
let \(F_p^{(\infty)}\) be the \(p\)-\textit{class tower},
more precisely the unramified Hilbert \(p\)-class field tower,
that is the maximal unramified pro-\(p\) extension, of \(F\).
The individual stages \(F_p^{(n)}\) of the tower
\[F=F_p^{(0)}\le F_p^{(1)}\le F_p^{(2)}\le\ldots\le F_p^{(n)}\le\ldots\le F_p^{(\infty)}\]
are described by the derived quotients
\(\mathfrak{G}/\mathfrak{G}^{(n)}\simeq\mathrm{G}_p^n{F}:=\mathrm{Gal}(F_p^{(n)}/F)\) with \(n\ge 1\),
of the \(p\)-\textit{class tower group} \(\mathfrak{G}:=\mathrm{G}_p^\infty{F}:=\mathrm{Gal}(F_p^{(\infty)}/F)\).
The purpose of this paper is to report on the
most up-to-date theoretical view of \(p\)-class towers
and the state of the art of actual numerical investigations.
After a summary of algebraic and arithmetic foundations in \S\
\ref{s:Foundations},
four crucial concepts will illuminate
recent innovation and progress in a very ostensive way:
\begin{itemize}
\item
the \textit{Artin limit pattern} \((\tau^{(\infty)}{F},\varkappa^{(\infty)}{F})\) of the \(p\)-class tower \(F_p^{(\infty)}\) in \S\
\ref{s:LimitPattern},
\item
\textit{successive approximation} and the current status of computational perspectives in \S\
\ref{s:Approximation},
\item
\textit{maximal subgroups} of \(3\)-class tower groups with coclass one in \S\
\ref{s:MaximalSubgroups},
and
\item
the realization of new \(3\)-class tower groups over \textit{dihedral fields} in \S\
\ref{s:General}.
\end{itemize}

%\newpage

%--------------------------------------------------------------------------------

\section{Algebraic and arithmetic foundations}
\label{s:Foundations}

\subsection{Abelian type invariants}
\label{ss:ATI}
\noindent
First, we recall the concepts of
abelian type invariants and abelian quotient invariants
in the context of finite \(p\)-groups and infinite pro-\(p\) groups,
and we specify our conventions in their notation.

%\noindent
Let \(p\ge 2\) be a prime number.
It is well known that
a finite abelian group \(A\)
with order \(\lvert A\rvert\) a power of \(p\)
possesses a \textit{unique} representation
\begin{equation}
\label{eqn:ATI}
A\simeq\oplus_{i=1}^s(\mathbb{Z}/p^{e_i}\mathbb{Z})^{r_i}
\end{equation}
as a direct sum
with integers \(s\ge 0\),
\(r_i\ge 1\) for \(1\le i\le s\),
and \textit{strictly decreasing} \(e_1>\ldots >e_s\ge 1\).

%--------------------------------------------------------------------------------

\begin{definition}
\label{dfn:ATI}
The \textit{abelian type invariants} of \(A\)
are given either in \textit{power form},
\begin{equation}
\label{eqn:PowATI}
\mathrm{ATI}(A):=
\lbrack\overbrace{p^{e_1},\ldots,p^{e_1}}^{r_1 \text{ times}},\ldots,
\overbrace{p^{e_s},\ldots,p^{e_s}}^{r_s \text{ times}}\rbrack,
\end{equation}
or in \textit{logarithmic form} with formal exponents indicating iteration,
\begin{equation}
\label{eqn:LogATI}
\mathrm{ATI}(A):=\lbrack e_1^{r_1},\ldots,e_s^{r_s}\rbrack.
\end{equation}
\end{definition}

%--------------------------------------------------------------------------------

%\noindent
Let \(G\) be a pro-\(p\) group
with commutator subgroup \(G^\prime\)
and \textit{finite} abelianization \(G^{ab}:=G/G^\prime\).

\begin{definition}
\label{dfn:AQI}
The \textit{abelian quotient invariants} of \(G\)
are the abelian type invariants of the biggest abelian quotient of \(G\)
\begin{equation}
\label{eqn:AQI}
\mathrm{AQI}(G):=\mathrm{ATI}(G^{ab}).
\end{equation}
\end{definition}

%--------------------------------------------------------------------------------

\subsubsection{Higher abelian quotient invariants of a pro-\(p\) group}
\label{sss:TauGrp}
\noindent
Within the frame of group theory,
abelian quotient invariants of \textit{higher order}
are defined recursively in the following manner.

\begin{definition}
\label{dfn:TauGrp}
The set of all maximal subgroups of \(G\)
which contain the commutator subgroup,
\begin{equation}
\label{eqn:LyrGrp}
\mathrm{Lyr}_1{G}:=\lbrace S\triangleleft G\mid G^\prime\le S,\ (G:S)=p\rbrace,
\end{equation}
is called the \textit{first layer} of subgroups of \(G\).
For any positive integer \(n\ge 1\),
\textit{abelian quotient invariants of \(n\)th order} of \(G\)
are defined recursively by 
\begin{equation}
\label{eqn:TauGrp}
\tau^{(1)}{G}:=\mathrm{AQI}(G), \text{ and }
\tau^{(n)}{G}:=(\tau^{(1)}{G};(\tau^{(n-1)}{S})_{S\in\mathrm{Lyr}_1{G}}) \text{ for } n\ge 2.
\end{equation}
\end{definition}

%--------------------------------------------------------------------------------

\subsubsection{Higher abelian type invariants of a number field}
\label{sss:TauFld}
\noindent
Within the frame of algebraic number theory,
abelian type invariants of \textit{higher order}
are defined recursively in the following way.

%\noindent
Let \(F\) be an algebraic number field,
denote by \(\mathrm{Cl}_p{F}\) the \(p\)-class group of \(F\),
and by \(F_p^{(1)}\) the first Hilbert \(p\)-class field of \(F\),
that is,
the maximal abelian unramified \(p\)-extension of \(F\).

%--------------------------------------------------------------------------------

\begin{definition}
\label{dfn:TauFld}
The set of all unramified cyclic extensions \(E/F\) of degree \(p\)
which are contained in the \(p\)-class field,
\begin{equation}
\label{eqn:LyrFld}
\mathrm{Lyr}_1{F}:=\lbrace E>F\mid E\le F_p^{(1)},\ \lbrack E:F\rbrack=p\rbrace
\end{equation}
is called the \textit{first layer} of extension fields of \(F\).
For any positive integer \(n\ge 1\),
\textit{abelian type invariants of \(n\)th order} of \(F\)
are defined recursively by 
\begin{equation}
\label{eqn:TauFld}
\tau^{(1)}{F}:=\mathrm{ATI}(\mathrm{Cl}_p{F}), \text{ and }
\tau^{(n)}{F}:=(\tau^{(1)}{F};(\tau^{(n-1)}{E})_{E\in\mathrm{Lyr}_1{F}}) \text{ for } n\ge 2.
\end{equation}
\end{definition}

%--------------------------------------------------------------------------------

\subsection{Transfer kernel type}
\label{ss:TKT}
\noindent
Next, we explain the concept of transfer kernel type
of finite \(p\)-groups and infinite pro-\(p\) groups.

\subsubsection{Transfer kernel type of a pro-\(p\) group}
\label{sss:TKTGrp}

\noindent
Denote by \(p\ge 2\) a prime number.
Let \(G\) be a pro-\(p\) group
with commutator subgroup \(G^\prime\)
and \textit{finite} abelianization \(G^{ab}=G/G^\prime\).

\begin{definition}
\label{dfn:TKTGrp}
By the \textit{transfer kernel type} of \(G\),
we understand the finite family
\begin{equation}
\label{eqn:TKTGrp}
\varkappa(G):=(\ker(T_{G,S}))_{S\in\mathrm{Lyr}_1{G}},
\end{equation}
where \(T_{G,S}:\,G/G^\prime\to S/S^\prime\) denotes
the transfer homomorphism from \(G\) to the normal subgroup \(S\) of finite index \((G:S)=p\).
\end{definition}

%--------------------------------------------------------------------------------

\noindent
More specifically,
suppose that \(G^{ab}\simeq C_p\times C_p\) is elementary abelian of rank two.
Then \(\mathrm{Lyr}_1{G}\) has \(p+1\) elements \(S_1,\ldots,S_{p+1}\),
the transfer kernel type of \(G\) is described briefly
by a family of non-negative integers
\(\varkappa(G)=(\varkappa_i)_{1\le i\le p+1}\in\lbrack 0,p+1\rbrack^{p+1}\)
such that
\begin{equation}
\label{eqn:TKT11Grp}
\varkappa_i:=
\begin{cases}
0 & \text{ if } \ker(T_{G,S_i})=G/G^\prime, \\
j & \text{ if } \ker(T_{G,S_i})=S_j/G^\prime \text{ for some } 1\le j\le p+1,
\end{cases}
\end{equation}
and the symmetric group \(\mathrm{S}_{p+1}\) of degree \(p+1\) acts on \(\lbrack 0,p+1\rbrack^{p+1}\)
via \(\varkappa\mapsto\varkappa^\pi:=\pi_0^{-1}\circ\varkappa\circ\pi\), for each \(\pi\in\mathrm{S}_{p+1}\),
where the extension \(\pi_0\) of \(\pi\) to \(\lbrack 0,p+1\rbrack\) fixes the zero.

%--------------------------------------------------------------------------------

\begin{definition}
\label{dfn:TKT11Grp}
The orbit \(\varkappa(G)^{\mathrm{S}_{p+1}}\) is called the \textit{invariant type} of \(G\),
but it is actually given by one of the orbit representatives \((\varkappa_i)_{1\le i\le p+1}\).
Any two distinct orbit representatives \(\lambda_1,\lambda_2\in\varkappa(G)^{\mathrm{S}_{p+1}}\)
are called \textit{equivalent}, denoted by the symbol \(\lambda_1\sim\lambda_2\).
\end{definition}

%--------------------------------------------------------------------------------

\subsubsection{Transfer kernel type of a number field}
\label{sss:TKTFld}
\noindent
Let \(F\) be an algebraic number field,
and denote by \(\mathrm{Cl}_p{F}\) the \(p\)-class group of \(F\).

\begin{definition}
\label{dfn:TKTFld}
By the \textit{transfer kernel type} of \(F\),
we understand the finite family
\begin{equation}
\label{eqn:TKTFld}
\varkappa(F):=(\ker(T_{F,E}))_{E\in\mathrm{Lyr}_1{F}},
\end{equation}
where \(T_{F,E}:\,\mathrm{Cl}_p{F}\to\mathrm{Cl}_p{E}\) denotes
the transfer of \(p\)-classes from \(F\) to the unramified cyclic extension \(E\) of degree \(\lbrack E:F\rbrack=p\),
which is also known as the \(p\)-class extension homomorphism.
\end{definition}

%--------------------------------------------------------------------------------

\noindent
More specifically,
suppose that \(\mathrm{Cl}_p{F}\simeq C_p\times C_p\) is elementary abelian of rank two.
Then \(\mathrm{Lyr}_1{F}\) has \(p+1\) elements \(E_1,\ldots,E_{p+1}\),
the transfer kernel type of \(F\) is described briefly
by a family of non-negative integers
\(\varkappa(F)=(\varkappa_i)_{1\le i\le p+1}\in\lbrack 0,p+1\rbrack^{p+1}\)
such that
\begin{equation}
\label{eqn:TKT11Fld}
\varkappa_i:=
\begin{cases}
0 & \text{ if } \ker(T_{F,E_i})=\mathrm{Cl}_p{F}, \\
j & \text{ if } \ker(T_{F,E_i})=\mathrm{Norm}_{E_j/F}(\mathrm{Cl}_p{E_j}) \text{ for some } 1\le j\le p+1,
\end{cases}
\end{equation}
and the symmetric group \(\mathrm{S}_{p+1}\) of degree \(p+1\) acts on \(\lbrack 0,p+1\rbrack^{p+1}\)
via \(\varkappa\mapsto\varkappa^\pi:=\pi_0^{-1}\circ\varkappa\circ\pi\), for each \(\pi\in\mathrm{S}_{p+1}\),
where the extension \(\pi_0\) of \(\pi\) to \(\lbrack 0,p+1\rbrack\) fixes the zero.

%--------------------------------------------------------------------------------

\begin{definition}
\label{dfn:TKT11Fld}
The orbit \(\varkappa(F)^{\mathrm{S}_{p+1}}\) is called the \textit{invariant type} of \(F\),
but it is actually given by one of the orbit representatives \((\varkappa_i)_{1\le i\le p+1}\).
Any two distinct orbit representatives \(\lambda_1,\lambda_2\in\varkappa(F)^{\mathrm{S}_{p+1}}\)
are called \textit{equivalent}, denoted by the symbol \(\lambda_1\sim\lambda_2\).
\end{definition}

%\newpage

%--------------------------------------------------------------------------------

\section{The Artin limit pattern}
\label{s:LimitPattern}
\noindent
Let \(p\) be a prime number.
For the recursive construction of the Artin limit pattern
of a pro-\(p\) group \(G\)
with commutator subgroup \(G^\prime\)
and \textit{finite} abelianization \(G^{ab}=G/G^\prime\),
we need the following considerations.

%--------------------------------------------------------------------------------

\subsection{Mappings of the Artin limit pattern}
\label{ss:LimitPatternMaps}
%\noindent
Due to our assumptions, the first layer \(\mathrm{Lyr}_1{G}\) of subgroups of \(G\) is a finite set
consisting of maximal normal subgroups \(S\) of \(G\)
with abelian quotients \(G/S\).
Consequently, the \textit{Artin transfer homomorphism}
from \(G\) to \(S\in\mathrm{Lyr}_1{G}\)
is distinguished by a very simple mapping law:
\begin{equation}
\label{eqn:Transfer}
T_{G,S}:\,G/G^\prime\to S/S^\prime,\ g\cdot G^\prime\mapsto
\begin{cases}
g^p\cdot S^\prime                        & \text{ if } g\in G/G^\prime\setminus S/G^\prime, \\
g^{1+h+h^2+\ldots+h^{p-1}}\cdot S^\prime & \text{ if } g\in S/G^\prime,
\end{cases}
\end{equation}
where \(h\) denotes an arbitrary element in \(G\setminus S\)
\cite[\S\ 4.1, p. 76]{Ma9}.

%\noindent
The Artin limit pattern encapsulates particular group theoretic information
(connected with Artin transfers)
about the lattice of subgroups of \(G\),
where each element \(U\) has at least one predecessor,
except the root \(G\) itself.
We select a unique predecessor in the following way:
for \(U\in\mathrm{Lyr}_1{S}\) we put \(\pi(U):=S\),
and we add the formal definition \(\pi(G):=G\).
This enables a recursive construction, as follows:

\begin{definition}
\label{dfn:Collection}
The \textit{collection of Artin transfers up to order} \(n\) of \(G\) is defined recursively by
\begin{equation}
\label{eqn:MapRecursion}
\alpha^{(1)}{G}:=T_{\pi(G),G}, \text{ and }
\alpha^{(n)}{G}:=\lbrack\alpha^{(1)}{G};(\alpha^{(n-1)}{S})_{S\in\mathrm{Lyr}_1{G}}\rbrack \text{ for } n\ge 2.
\end{equation}
The limit of this infinite recursive nesting process
is denoted by
\begin{equation}
\label{eqn:MapLimit}
\alpha^{(\infty)}{G}:=\lim_{n\to\infty}\,\alpha^{(n)}{G}
\end{equation}
and is called the \textit{Artin transfer collection} of \(G\).
\end{definition}

%--------------------------------------------------------------------------------

\begin{remark}
\label{rmk:Collection}
By means of the collection of Artin transfers up to order three,
\[\alpha^{(3)}{G}=\lbrack T_{G,G};(\alpha^{(2)}{S})_{S\in\mathrm{Lyr}_1{G}}\rbrack
=\lbrack T_{G,G};(\lbrack T_{G,S};(T_{S,U})_{U\in\mathrm{Lyr}_1{S}}\rbrack)_{S\in\mathrm{Lyr}_1{G}}\rbrack,\]
it should be emphasized that our definition of stepwise relative mappings \(T_{G,S}\) and \(T_{S,U}\)
admits finer information than the corresponding absolute mappings \(T_{G,U}=T_{S,U}\circ T_{G,S}\)
\cite[Thm. 3.3, p. 72]{Ma9},
since in general the kernel of \(T_{S,U}\) cannot be reconstructed from \(T_{G,U}\) and \(T_{G,S}\).
\end{remark}

%--------------------------------------------------------------------------------

\subsection{Objects of the Artin limit pattern}
\label{ss:LimitPatternObjects}
%\noindent
The infinite collection of mappings \(\alpha^{(\infty)}{G}\)
is only the foundation for the objects \(\tau^{(\infty)}{G}\) and \(\varkappa^{(\infty)}{G}\)
we are really interested in.

\begin{definition}
\label{dfn:Limit}
The \textit{iterated abelian quotient invariants up to order} \(n\) of \(G\) are defined recursively by
\begin{equation}
\label{eqn:TargetRecursion}
\tau^{(1)}{G}:=\mathrm{AQI}(G), \text{ and }
\tau^{(n)}{G}:=\lbrack\tau^{(1)}{G};(\tau^{(n-1)}{S})_{S\in\mathrm{Lyr}_1{G}}\rbrack \text{ for } n\ge 2.
\end{equation}
Similarly, the \textit{iterated transfer kernels up to order} \(n\) of \(G\) are defined recursively by
\begin{equation}
\label{eqn:KernelRecursion}
\varkappa^{(1)}{G}:=\ker(T_{\pi(G),G}), \text{ and }
\varkappa^{(n)}{G}:=\lbrack\varkappa^{(1)}{G};(\varkappa^{(n-1)}{S})_{S\in\mathrm{Lyr}_1{G}}\rbrack \text{ for } n\ge 2.
\end{equation}
Both are collected in the \(n\)th \textit{order Artin pattern} \(\mathrm{AP}^{(n)}{G}:=(\tau^{(n)}{G},\varkappa^{(n)}{G})\) of \(G\).
The limits of these infinite recursive nesting processes
are called the \textit{abelian invariant collection} of \(G\),
\begin{equation}
\label{eqn:TargetLimit}
\tau^{(\infty)}{G}:=\lim_{n\to\infty}\,\tau^{(n)}{G},
\end{equation}
and the \textit{transfer kernel collection} of \(G\),
\begin{equation}
\label{eqn:KernelLimit}
\varkappa^{(\infty)}{G}:=\lim_{n\to\infty}\,\varkappa^{(n)}{G}.
\end{equation}
Finally, the pair
\(\mathrm{ALP}(G):=(\tau^{(\infty)}{G},\varkappa^{(\infty)}{G})\)
is called the \textit{Artin limit pattern} of \(G\).
\end{definition}

%--------------------------------------------------------------------------------

\begin{remark}
\label{rmk:Limit}
For a finite \(p\)-group \(G\),
the recursive nesting processes in the definition of the Artin limit pattern
are actually finite.

The abelian quotient invariants are a \textit{unary} concept,
since \(\tau^{(1)}{G}=\mathrm{AQI}(G)=\mathrm{ATI}(G/G^\prime)\) depends on \(G\) only.
The first order abelian quotient invariants \(\tau^{(1)}{G}\) already
contain non-trivial information on the abelianization of \(G\).

The transfer kernels are a \textit{binary} concept for \(S<G\),
since \(\varkappa^{(1)}{S}=\ker(T_{\pi(S),S})\) depends on \(\pi(S)\) and \(S\).
The first order transfer kernel of \(G\) is trivial:
\(\varkappa^{(1)}{G}=\ker(T_{\pi(G),G})=\ker(T_{G,G})=\ker(\mathrm{id}_{G/G^\prime})=1\),
and non-trivial information starts with the transfer kernels of second order
\(\varkappa^{(1)}{S}=\ker(T_{\pi(S),S})=\ker(T_{G,S})\) for \(S\in\mathrm{Lyr}_1{G}\)
which are members of \(\varkappa^{(2)}{G}\).

The analogous constructions for a \textit{number field} \(F\) instead of a pro-\(p\) group \(G\),
along the lines of \S\S\
\ref{sss:TauFld}
and
\ref{sss:TKTFld},
lead to the \textit{Artin limit pattern} \(\mathrm{ALP}(F):=(\tau^{(\infty)}{F},\varkappa^{(\infty)}{F})\) of \(F\).
\end{remark}

%--------------------------------------------------------------------------------

\subsection{Connection between pro-\(p\) groups and number fields}
\label{ss:GrpFld}
\noindent
Let \(F_p^{(\infty)}\) be the Hilbert \(p\)-class tower of the number field \(F\),
that is, the maximal unramified pro-\(p\) extension of \(F\),
and denote by \(\mathrm{G}_p^\infty{F}=\mathrm{Gal}(F_p^{(\infty)}/F)\)
its Galois group, which is briefly called the \textit{\(p\)-tower group} of \(F\).
Now we are going to employ the
abelian type invariant collection \(\tau^{(\infty)}{F}\) of \(F\),
and the abelian quotient invariant collection
\(\tau^{(\infty)}(\mathrm{G}_p^\infty{F})\) of \(\mathrm{G}_p^\infty{F}\),
i.e., the first component of the respective Artin limit pattern.
The transfer kernel collections \(\varkappa^{(\infty)}\) will be considered further in \S\
\ref{s:MaximalSubgroups}.

\begin{theorem}
\label{thm:TauGrpFld}
For each integer \(n\ge 1\),
the abelian quotient invariants of \(n\)th order of the \(p\)-tower group \(\mathrm{G}_p^\infty{F}\) of \(F\)
are equal to the abelian type invariants of \(n\)th order of the number field \(F\)
\begin{equation}
\label{eqn:TauGrpFld}
(\forall n\ge 1) \quad \tau^{(n)}(\mathrm{G}_p^\infty{F})=\tau^{(n)}{F}, \text{ and thus } \tau^{(\infty)}(\mathrm{G}_p^\infty{F})=\tau^{(\infty)}{F}.
\end{equation}
The invariant type of the \(p\)-tower group \(\mathrm{G}_p^\infty{F}\) of \(F\)
coincides with the invariant type of the number field \(F\)
\begin{equation}
\label{eqn:TKTGrpFld}
\varkappa(\mathrm{G}_p^\infty{F})^{S_{p+1}}=\varkappa(F)^{S_{p+1}}.
\end{equation}
Even the orbit representatives of the transfer kernel types of \(\mathrm{G}_p^\infty{F}\) and \(F\) coincide,
\begin{equation}
\label{eqn:TKT11GrpFld}
\varkappa(\mathrm{G}_p^\infty{F})=(\ker(T_{\mathrm{G}_p^\infty{F},U_i}))_{1\le i\le p+1}=
(\ker(T_{F,E_i}))_{1\le i\le p+1}=\varkappa(F),
\end{equation}
provided that the \(U_i\in\mathrm{Lyr}_1(\mathrm{G}_p^\infty{F})\) and the \(E_i\in\mathrm{Lyr}_1{F}\)
are connected by
\(U_i=\mathrm{Gal}(F_p^{(\infty)}/E_i)\), for each \(1\le i\le p+1\).
Otherwise, we only have equivalence
\(\varkappa(\mathrm{G}_p^\infty{F})\sim\varkappa(F)\).
\end{theorem}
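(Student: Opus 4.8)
The strategy is to translate every object into class field theory and then run two parallel inductions on $n$, one for $\tau$ and one for $\varkappa$, the external inputs being Artin reciprocity, the stability of the $p$-class tower inside itself, and Artin's arithmetic interpretation of the group-theoretic transfer. First I would set up the dictionary. By the main theorem of class field theory, for every number field $K$ the Artin map gives an isomorphism $\mathrm{Cl}_p K \cong \mathrm{Gal}(K_p^{(1)}/K)$; since $K_p^{(1)}$ is the fixed field of the commutator subgroup of $\mathrm{G}_p^\infty K := \mathrm{Gal}(K_p^{(\infty)}/K)$, this reads $\mathrm{Cl}_p K \cong \mathrm{G}_p^\infty K/(\mathrm{G}_p^\infty K)'$, that is, $\tau^{(1)}K = \tau^{(1)}(\mathrm{G}_p^\infty K)$. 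Next I would invoke the standard stability fact: if $E/K$ is a cyclic unramified degree-$p$ extension with $E \le K_p^{(\infty)}$ — and, more generally, for any subfield $E$ of $K_p^{(\infty)}$ with $E/K$ Galois — then $E_p^{(\infty)} = K_p^{(\infty)}$, because $K_p^{(\infty)}/E$ is unramified pro-$p$ while $E_p^{(\infty)}/K$ is unramified, is Galois (as $E/K$ is), and has all its finite subextensions of $p$-power degree over $K$ (since $[LE:E]$ is a $p$-power and $[E:K]=p$ for any finite $K \le L \le E_p^{(\infty)}$), so that maximality of $K_p^{(\infty)}$ forces equality. Hence $\mathrm{G}_p^\infty E = \mathrm{Gal}(K_p^{(\infty)}/E)$ is literally an open subgroup of $\mathfrak{G} := \mathrm{G}_p^\infty K$, and Galois theory together with $\mathrm{Gal}(K_p^{(\infty)}/K_p^{(1)}) = \mathfrak{G}'$ yields a bijection $\mathrm{Lyr}_1 K \to \mathrm{Lyr}_1 \mathfrak{G}$, $E \mapsto U_E := \mathrm{Gal}(K_p^{(\infty)}/E)$, which is compatible with iteration: the analogous bijection with $E$ in place of $K$ is just the restriction of this one to the subgroups lying between $U_E'$ and $U_E$.

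With the dictionary in place, the equality $\tau^{(n)}(\mathrm{G}_p^\infty K) = \tau^{(n)} K$ follows by induction on $n$, uniformly in the number field $K$. The case $n=1$ is the isomorphism $\mathrm{Cl}_p K \cong \mathrm{G}_p^\infty K/(\mathrm{G}_p^\infty K)'$ just recalled. For the inductive step one unfolds the recursive definitions,
\[
\tau^{(n)} K = \bigl(\tau^{(1)} K; (\tau^{(n-1)} E)_{E \in \mathrm{Lyr}_1 K}\bigr) = \bigl(\tau^{(1)} \mathfrak{G}; (\tau^{(n-1)}(\mathrm{G}_p^\infty E))_{E \in \mathrm{Lyr}_1 K}\bigr),
\]
using the base case and the induction hypothesis applied to each $E \in \mathrm{Lyr}_1 K$; replacing $\mathrm{G}_p^\infty E$ by $U_E$ and re-indexing along $E \mapsto U_E$ turns the right-hand side into $\tau^{(n)} \mathfrak{G}$. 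Passing to the limit $n \to \infty$ gives $\tau^{(\infty)}(\mathrm{G}_p^\infty F) = \tau^{(\infty)} F$, and specializing $K = F$ proves the first assertion.

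For the transfer-kernel assertions I would assume $\mathrm{Cl}_p F \simeq C_p \times C_p$, so that $\mathfrak{G}^{ab} \simeq C_p \times C_p$ and both $\mathrm{Lyr}_1 F$ and $\mathrm{Lyr}_1 \mathfrak{G}$ consist of $p+1$ members $E_1, \dots, E_{p+1}$, resp.\ $U_i := U_{E_i}$. The crux is Artin's translation of the transfer into arithmetic: under the Artin isomorphisms $\mathfrak{G}/\mathfrak{G}' \cong \mathrm{Cl}_p F$ and $U_i/U_i' \cong \mathrm{Cl}_p E_i$, the group-theoretic transfer $T_{\mathfrak{G}, U_i}$ of \eqref{eqn:Transfer} corresponds to the $p$-class extension homomorphism $T_{F, E_i}$, so $\ker(T_{\mathfrak{G}, U_i})$ is carried onto $\ker(T_{F, E_i})$ by $\mathfrak{G}/\mathfrak{G}' \cong \mathrm{Cl}_p F$. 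It then remains to check that the two labelling conventions \eqref{eqn:TKT11Grp} and \eqref{eqn:TKT11Fld} match: a proper nontrivial kernel on the group side is some $U_j/\mathfrak{G}'$, its image in $\mathrm{Cl}_p F$ is $\mathrm{Gal}(F_p^{(1)}/E_j)$, and by class field theory this subgroup is exactly the norm subgroup $\mathrm{Norm}_{E_j/F}(\mathrm{Cl}_p E_j)$ named in \eqref{eqn:TKT11Fld}. Therefore $\varkappa(\mathrm{G}_p^\infty F) = \varkappa(F)$ once the layers are matched by $U_i = \mathrm{Gal}(F_p^{(\infty)}/E_i)$; an arbitrary relabelling changes the family only by the induced permutation in $\mathrm{S}_{p+1}$, whence $\varkappa(\mathrm{G}_p^\infty F) \sim \varkappa(F)$ in general and the invariant types $\varkappa(\mathrm{G}_p^\infty F)^{\mathrm{S}_{p+1}} = \varkappa(F)^{\mathrm{S}_{p+1}}$ coincide unconditionally.

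I expect the main obstacle to be the careful justification of Artin's transfer–translation step — the commutative square relating the group-theoretic transfer $T_{\mathfrak{G}, U_i}$ to the ideal-class extension $T_{F, E_i}$ — together with the verification that under the Artin isomorphism the norm subgroup $\mathrm{Norm}_{E_j/F}(\mathrm{Cl}_p E_j)$ corresponds to $U_j/\mathfrak{G}'$. Once these are available, the rest is the routine unfolding of the recursive definitions above plus the stability $E_p^{(\infty)} = F_p^{(\infty)}$; this is also why it is prudent to arrange the recursion so that each successive field extension is cyclic of degree $p$, so that the Galois hypothesis used in the stability argument remains available at every level.
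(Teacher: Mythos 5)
Your argument is correct and takes the same route as the paper, whose entire proof is the single sentence that the claims are \textquotedblleft well-known consequences of the Artin reciprocity law\textquotedblright\ with citations to Artin's two papers; you have simply supplied the details that the author leaves implicit, namely the Artin-map dictionary \(\mathrm{Cl}_p K\cong \mathfrak{G}/\mathfrak{G}'\), the stability \(E_p^{(\infty)}=F_p^{(\infty)}\) for unramified cyclic \(E/F\) inside the tower, the induction on \(n\) unfolding the two recursions in parallel, and Artin's identification of the group-theoretic Verlagerung with the \(p\)-class extension homomorphism together with the matching of \(U_j/\mathfrak{G}'\) with \(\mathrm{Norm}_{E_j/F}(\mathrm{Cl}_p E_j)\). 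I see no gaps.
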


\begin{proof}
The claims are well-known consequences of the Artin reciprocity law of class field theory
\cite{Ar1,Ar2}.
\end{proof}

%--------------------------------------------------------------------------------

%\noindent
In contrast to the full \(p\)-tower group \(\mathfrak{G}=\mathrm{G}_p^\infty{F}\),
the Galois groups \(\mathrm{G}_p^m{F}:=\mathrm{Gal}(F_p^{(m)}/F)\simeq\mathfrak{G}/\mathfrak{G}^{(m)}\)
of the finite stages \(F_p^{(m)}\) of the \(p\)-class tower of \(F\),
that is, of the higher Hilbert \(p\)-class fields of the number field \(F\),
in general fail to reveal the abelian type invariants of \(n\)th order of the number field \(F\).
More precisely, there is a strict upper bound on the order \(n\) of the ATI of \(F\)
which coincide with the AQI of order \(n\) of the \(m\)th \(p\)-class group \(\mathrm{G}_p^m{F}\) of \(F\)
with a fixed integer \(m\ge 0\), namely the bound \(n\le m\).

\begin{theorem}
\label{thm:SuccessiveApproximation}
\textbf{(Successive Approximation Theorem.)}\\
Let \(F\) be a number field, \(p\) a prime, and \(m,n\) integers.
The abelian invariant collection \(\tau^{(\infty)}{F}\) of \(F\) is approximated successively
by the iterated AQI of sufficiently high \(p\)-class groups of \(F\):
\begin{equation}
\label{eqn:TauSuccessiveApproximation}
(\forall n\ge 1) \quad (\forall m\ge n) \quad \tau^{(n)}(\mathrm{G}_p^m{F})=\tau^{(n)}{F}.
\end{equation}
However, the transfer kernel type is a phenomenon of second order:
\begin{equation}
\label{eqn:TKTSuccessiveApproximation}
(\forall m\ge 2) \quad \varkappa(\mathrm{G}_p^m{F})\sim\varkappa(F),
\end{equation}
in particular, the metabelian second \(p\)-class group
\(\mathfrak{M}:=\mathrm{G}_p^2{F}\simeq\mathfrak{G}/\mathfrak{G}^{\prime\prime}\) of \(F\)
is sufficient for determining the transfer kernel type of \(F\).
\end{theorem}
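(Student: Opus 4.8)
The plan is to deduce both assertions from Theorem~\ref{thm:TauGrpFld} by comparing the pro-\(p\) group \(\mathfrak{G}=\mathrm{G}_p^\infty{F}\) with its derived quotients \(\mathrm{G}_p^m{F}\simeq\mathfrak{G}/\mathfrak{G}^{(m)}\), and by controlling how far down the subgroup tree rooted at \(\mathfrak{G}\) the recursive patterns \(\tau^{(n)}\) and \(\varkappa\) actually look: once the truncating subgroup \(\mathfrak{G}^{(m)}\) lies deep enough in the derived series, it does not affect any of the data entering these patterns.

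First I would do the combinatorial bookkeeping. Put \(N:=\mathfrak{G}^{(m)}\). Since \(N\le\mathfrak{G}^\prime\) whenever \(m\ge 1\), the correspondence theorem provides a canonical bijection \(\mathrm{Lyr}_1(\mathfrak{G}/N)\leftrightarrow\mathrm{Lyr}_1(\mathfrak{G})\), \(S\mapsto\widetilde{S}\) with \(S=\widetilde{S}/N\). Iterating, one proves by induction on the depth \(k\) that every subgroup \(\widetilde{S}_k\) of \(\mathfrak{G}\) reachable by \(k\) successive steps \(\widetilde{S}_{j+1}\in\mathrm{Lyr}_1(\widetilde{S}_j)\) satisfies \(\widetilde{S}_k\supseteq\mathfrak{G}^{(k)}\), hence \(\widetilde{S}_k^\prime\supseteq\mathfrak{G}^{(k+1)}\); the inductive step uses only that a member of \(\mathrm{Lyr}_1\) contains the commutator subgroup. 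Consequently, as soon as \(m\ge k+1\) we have \(N=\mathfrak{G}^{(m)}\le\mathfrak{G}^{(k+1)}\le\widetilde{S}_k^\prime\), so that \((\widetilde{S}_k/N)^\prime=\widetilde{S}_k^\prime/N\), the layer bijection \(\mathrm{Lyr}_1(\widetilde{S}_k/N)\leftrightarrow\mathrm{Lyr}_1(\widetilde{S}_k)\) persists, and the abelianization is unchanged, \((\widetilde{S}_k/N)^{ab}\simeq\widetilde{S}_k^{ab}\); in particular \(\tau^{(1)}(\widetilde{S}_k/N)=\tau^{(1)}(\widetilde{S}_k)\).

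For \eqref{eqn:TauSuccessiveApproximation}: unfolding the recursion \eqref{eqn:TauGrp} exhibits \(\tau^{(n)}{G}\) as the datum of the tree of maximal subgroups of \(G\) from depth \(0\) down to depth \(n-1\), each node labelled by its first-order invariants \(\tau^{(1)}\). For \(G=\mathfrak{G}/\mathfrak{G}^{(m)}\) with \(m\ge n\), the previous paragraph, applied at every depth \(k\le n-1\) (where indeed \(m\ge n\ge k+1\)), identifies this labelled tree with the one for \(\mathfrak{G}\); hence \(\tau^{(n)}(\mathfrak{G}/\mathfrak{G}^{(m)})=\tau^{(n)}(\mathfrak{G})\), and Theorem~\ref{thm:TauGrpFld} yields \(\tau^{(n)}(\mathfrak{G})=\tau^{(n)}{F}\). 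For \eqref{eqn:TKTSuccessiveApproximation} one needs in addition that the transfer maps are insensitive to the quotient by \(N\): for \(\widetilde{S}\in\mathrm{Lyr}_1(\mathfrak{G})\) with \(N\le\widetilde{S}^\prime\), evaluating the mapping law \eqref{eqn:Transfer} on a lift \(\bar g\in\mathfrak{G}\) of a given element of \(\mathfrak{G}/N\), with the auxiliary element \(h\) replaced by a lift \(\bar h\in\mathfrak{G}\setminus\widetilde{S}\), shows that \(T_{\mathfrak{G}/N,\widetilde{S}/N}\) is carried onto \(T_{\mathfrak{G},\widetilde{S}}\) by the canonical identifications \((\mathfrak{G}/N)^{ab}\simeq\mathfrak{G}^{ab}\) and \((\widetilde{S}/N)^{ab}\simeq\widetilde{S}^{ab}\); hence the kernels correspond. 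As \(m\ge 2\) forces \(N=\mathfrak{G}^{(m)}\le\mathfrak{G}^{(2)}\le\widetilde{S}^\prime\) for every \(\widetilde{S}\in\mathrm{Lyr}_1(\mathfrak{G})\), this gives \(\varkappa(\mathrm{G}_p^m{F})=\varkappa(\mathfrak{G})\) as families indexed by the bijectively matched first layers, and Theorem~\ref{thm:TauGrpFld} gives \(\varkappa(\mathfrak{G})\sim\varkappa(F)\), with equality exactly when the \(E_i\) and \(U_i\) are matched as there. The case \(m=2\) is the assertion about \(\mathfrak{M}=\mathfrak{G}/\mathfrak{G}^{(2)}\).

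I expect the main obstacle to be the careful verification of this transfer--quotient compatibility: one must check that the value of \(T_{\mathfrak{G}/N,\widetilde{S}/N}\) computed via \eqref{eqn:Transfer} is independent of the chosen coset representatives and genuinely corresponds to \(T_{\mathfrak{G},\widetilde{S}}\) under both abelianization isomorphisms, all the while tracking the fact that \(N\) must lie inside \(\widetilde{S}^\prime\), not merely inside \(\mathfrak{G}^\prime\) -- this is precisely what forces the threshold \(m\ge 2\) for the transfer kernels, in contrast with \(m\ge 1\) for \(\tau^{(1)}\). A secondary point needing care is running the depth induction \(\widetilde{S}_k\supseteq\mathfrak{G}^{(k)}\) in tandem with the persistence of the layer bijections down to depth \(n-1\), since this is exactly what pins the bound in \eqref{eqn:TauSuccessiveApproximation} to \(m\ge n\).
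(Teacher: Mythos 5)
Your proposal is correct, but it cannot be compared line-by-line with the paper, because the paper offers no proof at all: it simply cites \cite[Thm.~1.19, p.~78]{Ma15} and \cite[p.~13]{Ma15b}. What you have written is a self-contained derivation of exactly the kind that those references encapsulate, namely that the order-\(n\) Artin pattern of a pro-\(p\) group is invariant under passage to the quotient by any term \(\mathfrak{G}^{(m)}\) of the derived series with \(m\ge n\), combined with Theorem~\ref{thm:TauGrpFld} to transport the statement from \(\mathfrak{G}=\mathrm{G}_p^\infty{F}\) to the field \(F\). Your two key steps check out: the depth induction \(\widetilde{S}_k\supseteq\mathfrak{G}^{(k)}\), hence \(\widetilde{S}_k^\prime\supseteq\mathfrak{G}^{(k+1)}\), uses only \(\widetilde{S}_{k+1}\supseteq\widetilde{S}_k^\prime\) and is what makes both the layer bijection and the abelianization at depth \(k\) survive the quotient precisely when \(m\ge k+1\); and the compatibility of the transfer with the quotient map follows from the explicit mapping law \eqref{eqn:Transfer} once \(N\le\widetilde{S}^\prime\), which for \(\widetilde{S}\in\mathrm{Lyr}_1(\mathfrak{G})\) is guaranteed by \(m\ge 2\) since \(\widetilde{S}^\prime\supseteq\mathfrak{G}^{\prime\prime}\). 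The bookkeeping correctly yields the thresholds \(m\ge n\) for \(\tau^{(n)}\) (abelianizations are needed down to depth \(n-1\)) and \(m\ge 2\) for \(\varkappa\) (a second-order datum). The only points you gloss over are routine in this setting: in the pro-\(p\) category the derived subgroups must be taken topologically closed, the members of \(\mathrm{Lyr}_1\) are automatically normal of index \(p\), and the finiteness of the abelianizations of all subgroups in the tree (needed for the recursive definitions to make sense) is supplied arithmetically by the finiteness of the class groups \(\mathrm{Cl}_p{E}\). None of these affects the validity of the argument.
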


\begin{proof}
This is one of the main results of
\cite[Thm. 1.19, p. 78]{Ma15}
and
\cite[p. 13]{Ma15b}.
\end{proof}

%\noindent
In general, the upper bound on the order \(n\) of the ATI of \(F\) in Theorem
\ref{thm:SuccessiveApproximation}
seems to be sharp, in the following sense,
where \(m=n-1\).

\begin{conjecture}
\label{cnj:StageSeparation}
\textbf{(Stage Separation Criterion.)}\\
Denote by \(\ell_p{F}\) the length of the \(p\)-class tower of \(F\),
that is the derived length \(\mathrm{dl}(\mathrm{G}_p^\infty{F})\) of the \(p\)-tower group of \(F\).
It is determined in terms of iterated AQI of higher \(p\)-class groups of \(F\) by the following condition: 
\begin{equation}
\label{eqn:TauStageSeparation}
(\forall n\ge 1) \quad \lbrack\ \ell_p{F}\ge n\ \Longleftrightarrow\ \tau^{(n)}(\mathrm{G}_p^{n-1}{F})<\tau^{(n)}{F}\ \rbrack.
\end{equation}
\end{conjecture}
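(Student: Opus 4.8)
The plan is to recast Conjecture~\ref{cnj:StageSeparation} group-theoretically, in terms of the \(p\)-tower group \(\mathfrak{G}=\mathrm{G}_p^\infty{F}\) and its derived quotient \(\bar{\mathfrak{G}}:=\mathfrak{G}/\mathfrak{G}^{(n-1)}\simeq\mathrm{G}_p^{n-1}{F}\), and to isolate the single genuinely open point. Since \(\ell_p{F}=\mathrm{dl}(\mathfrak{G})\), the hypothesis \(\ell_p{F}\ge n\) reads \(\mathfrak{G}^{(n-1)}\neq 1\). The preliminary observation is that, along any admissible chain \(\mathfrak{G}=S_0\ge S_1\ge\cdots\ge S_{n-1}\) with \(S_i\in\mathrm{Lyr}_1{S_{i-1}}\) (so \(S_{i-1}^\prime\le S_i\) and \((S_{i-1}:S_i)=p\)), an immediate induction gives \(\mathfrak{G}^{(i)}\le S_i\), hence \(\mathfrak{G}^{(n-1)}\le S_{n-1}\); moreover, for every \(k\le n-2\) one has \(\mathfrak{G}^{(n-1)}\le\mathfrak{G}^{(k+1)}=[\mathfrak{G}^{(k)},\mathfrak{G}^{(k)}]\le S_k^\prime\). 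It follows that the iterated first-layer structure of \(\bar{\mathfrak{G}}\) is canonically identified with that of \(\mathfrak{G}\), that all abelianizations at levels \(0,\ldots,n-2\) agree, and that \(\tau^{(n)}(\bar{\mathfrak{G}})\) and \(\tau^{(n)}{F}=\tau^{(n)}(\mathfrak{G})\) can differ only in their deepest entries, namely \(\mathrm{ATI}\bigl(S_{n-1}/(\mathfrak{G}^{(n-1)}\,S_{n-1}^\prime)\bigr)\) versus \(\mathrm{ATI}(S_{n-1}/S_{n-1}^\prime)\). In particular \(\tau^{(n)}(\mathrm{G}_p^{n-1}{F})\) is dominated by \(\tau^{(n)}{F}\), and the conjecture becomes equivalent to the statement that \(\mathfrak{G}^{(n-1)}\neq 1\) if and only if some admissible chain of length \(n-1\) satisfies \(\mathfrak{G}^{(n-1)}\not\le S_{n-1}^\prime\).

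One half is then immediate: if \(\mathfrak{G}^{(n-1)}=1\), i.e., \(\ell_p{F}\le n-1\), then \(\bar{\mathfrak{G}}=\mathfrak{G}\), hence \(F_p^{(n-1)}=F_p^{(\infty)}\) and \(\tau^{(n)}(\mathrm{G}_p^{n-1}{F})=\tau^{(n)}(\mathfrak{G})=\tau^{(n)}{F}\) by Theorem~\ref{thm:TauGrpFld}, so no strict drop can occur; this proves the ``\(\Leftarrow\)'' direction of the biconditional. The remaining task, by the reduction above, is the converse: if \(\mathfrak{G}^{(n-1)}\neq 1\) --- equivalently, if \(\mathfrak{G}^{(n-2)}\) is non-abelian --- then there exists an admissible chain of length \(n-1\) with \(\mathfrak{G}^{(n-1)}\not\le S_{n-1}^\prime\).

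I would attempt this last point by induction on \(n\), the base case \(n=1\) being the tautology \(\ell_p{F}\ge 1\Leftrightarrow\mathrm{Cl}_p{F}\neq 1\Leftrightarrow\tau^{(1)}{F}\neq\lbrack\,\rbrack\). For the inductive step one descends one maximal subgroup at a time, choosing \(S_{i+1}\in\mathrm{Lyr}_1{S_i}\) at each stage so as to keep \(\mathfrak{G}^{(n-1)}\) from being absorbed into the commutator subgroup \(S_{i+1}^\prime\), with the aim that after \(n-1\) steps \(\mathfrak{G}^{(n-1)}\not\le S_{n-1}^\prime\). The hard part is the lack of any closed description of \(S_{i+1}^\prime\) as \(S_{i+1}\) ranges over \(\mathrm{Lyr}_1{S_i}\): a priori it is conceivable that \(\mathfrak{G}^{(n-1)}\) is so small that it lies in \(\bigcap_{S\in\mathrm{Lyr}_1{S_i}}S^\prime\) at every stage, and hence survives inside \(S_{n-1}^\prime\) for \emph{every} admissible chain, which would contradict the conjecture. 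A uniform argument therefore seems to need a structural input of the shape ``\(\bigcap S_{n-1}^\prime=1\), the intersection being over all admissible chains of length \(n-1\), whenever \(\mathrm{dl}(\mathfrak{G})>n-1\)'', and no such input is currently available. In the meantime the statement can be confirmed throughout the tabulated ranges of \S\ \ref{s:Approximation}, and established in rigid families --- most notably for \(3\)-groups of coclass one, where the isomorphism types of the maximal subgroups and of their commutator subgroups are determined in \S\ \ref{s:MaximalSubgroups}, so that the witness chain can be exhibited explicitly. I expect that a general proof, once found, will rest on a quantitative refinement of the Successive Approximation Theorem~\ref{thm:SuccessiveApproximation}, controlling how the \(n\)th order Artin pattern stabilizes on passing from \(\mathfrak{G}/\mathfrak{G}^{(n-1)}\) to \(\mathfrak{G}/\mathfrak{G}^{(n)}\).
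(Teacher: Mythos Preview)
Your analysis is sound and matches the paper's own assessment: the statement is presented as a \emph{conjecture}, not a theorem, and the paper explicitly remarks immediately afterward that only the sufficiency of the condition --- your ``\(\Leftarrow\)'' direction --- is a proven theorem, citing \cite{Ma15b} without reproducing the argument. Your self-contained proof of that direction, via the identification \(\mathrm{G}_p^{n-1}{F}\simeq\mathfrak{G}/\mathfrak{G}^{(n-1)}\) and the trivial observation that \(\mathfrak{G}^{(n-1)}=1\) forces \(\tau^{(n)}(\mathrm{G}_p^{n-1}{F})=\tau^{(n)}(\mathfrak{G})=\tau^{(n)}{F}\), is correct and more explicit than anything the paper offers.

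Your group-theoretic reduction of the open ``\(\Rightarrow\)'' direction --- showing that \(\tau^{(n)}(\mathfrak{G}/\mathfrak{G}^{(n-1)})\) and \(\tau^{(n)}(\mathfrak{G})\) can differ only at the deepest leaves, and that the question is whether some admissible chain of length \(n-1\) has \(\mathfrak{G}^{(n-1)}\not\le S_{n-1}^\prime\) --- is a genuine contribution beyond the paper, which does not attempt the converse at all. The obstacle you isolate (that \(\mathfrak{G}^{(n-1)}\) might lie in \(\bigcap S_{n-1}^\prime\) over all admissible chains) is precisely the reason the statement remains conjectural; your honest acknowledgement that no structural input currently rules this out is the right conclusion.
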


\noindent
The sufficiency of the condition in Conjecture
\ref{cnj:StageSeparation}
is a proven theorem
\cite[p. 13]{Ma15b}.

%\newpage

%--------------------------------------------------------------------------------

\section{Successive approximation of the \(p\)-class tower}
\label{s:Approximation}

%--------------------------------------------------------------------------------

\subsection{Computational perspectives}
\label{ss:Computation}
\noindent
Our first attempt to find sound asymptotic tendencies
in the distribution of higher non-abelian \(p\)-class groups
\(\mathrm{G}_p^n{F}=\mathrm{Gal}(F_p^{(n)}/F)\), with \(n\ge 2\),
among the finite \(p\)-groups
was planned in \(1991\) already
\cite[\S\ 3, Remark, p. 77]{Ma}.
However, the insurmountable obstacles in the required computations
limited the progress for twenty years.
In \(2012\), we finally succeeded in the significant break-through
of computing the second \(3\)-class groups
\(\mathfrak{M}=\mathrm{G}_3^2{F}\),
that is, the metabelianizations \(\mathfrak{G}/\mathfrak{G}^{(2)}\)
of the \(3\)-class tower groups \(\mathfrak{G}=\mathrm{Gal}(F_3^{(\infty)}/F)\)
of all \(4596\) quadratic fields \(F=\mathbb{Q}(\sqrt{d})\)
with fundamental discriminants in the remarkable range \(-10^6<d<10^7\)
and elementary bicyclic \(3\)-class group \(\mathrm{Cl}_3{F}\simeq C_3\times C_3\) of rank two
\cite[\S\ 6, pp. 495--499]{Ma1}.
The underlying computational techniques were based on the
\textit{principalization algorithm via class group structure}
which we had invented in \(2009\) and implemented by means of
the number theoretic computer algebra system PARI/GP
\cite{PARI}
in \(2010\), as described in
\cite[\S\S\ 5--6, pp. 446--455]{Ma3}.

Throughout this paper,
isomorphism classes of finite groups \(G\) are characterized uniquely
by their identifier in the SmallGroups Database
\cite{BEO1,BEO2},
which is denoted by a pair \(\langle o,i\rangle\)
consisting of the order \(o=\mathrm{ord}(G)\) and a positive integer \(i\),
delimited with angle brackets.
The counter \(1\le i\le N(o)\) is unique for a fixed value of the order \(o\).
In the computational algebra system MAGMA
\cite{BCP,BCFS,MAGMA},
the upper bound \(N(o)\) can be obtained as return value of the function
\texttt{NumberOfSmallGroups}\((o)\),
provided that
\texttt{IsInSmallGroupDatabase}\((o)\)
returns \texttt{true}.
The identifier of a given finite group \(G\)
can be retrieved as return value of the function
\texttt{IdentifyGroup}\((G)\),
provided that
\texttt{CanIdentifyGroup}\((o)\)
returns \texttt{true}.

%\newpage

%--------------------------------------------------------------------------------

\subsection{Trivial towers with \(\ell_p{F}=0\)}
\label{ss:Trivial}
\noindent
For the decision if the \(p\)-class tower of a number field \(F\) is trivial
with length \(\ell_p{F}=0\)
it suffices to compute the class number \(h(F)\) of the field.

\begin{theorem}
\label{thm:Trivial}
\textbf{(Trivial \(p\)-class tower.)}\\
The \(p\)-class tower of a number field \(F\) is trivial, \(F_p^{(\infty)}=F\),
with length \(\ell_p{F}=0\),
if and only if the class number \(h(F)=\#\mathrm{Cl}(F)\) is not divisible by \(p\),
i. e., the \(p\)-class number is \(h_p{F}=1\).
\end{theorem}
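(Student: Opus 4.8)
The plan is to observe that this is an immediate consequence of the definition of the Hilbert $p$-class field tower together with a standard fact about the relationship between the $p$-class group and unramified abelian $p$-extensions.

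\textbf{Forward direction.} Suppose the $p$-class number satisfies $h_p{F}=1$, i.e.\ $\mathrm{Cl}_p{F}=1$. By class field theory, the first Hilbert $p$-class field $F_p^{(1)}$ is the maximal abelian unramified $p$-extension of $F$, and $\mathrm{Gal}(F_p^{(1)}/F)\simeq\mathrm{Cl}_p{F}$. Hence $\mathrm{Cl}_p{F}=1$ forces $F_p^{(1)}=F$. I would then argue by induction on the stages: if $F_p^{(n)}=F$, then $F_p^{(n+1)}$ is the first Hilbert $p$-class field of $F_p^{(n)}=F$, which again equals $F$. Therefore every stage coincides with $F$, and passing to the union gives $F_p^{(\infty)}=F$, so $\ell_p{F}=\mathrm{dl}(\mathrm{G}_p^\infty{F})=\mathrm{dl}(1)=0$.

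\textbf{Reverse direction.} Conversely, suppose $\ell_p{F}=0$, i.e.\ $F_p^{(\infty)}=F$. Then in particular $F_p^{(1)}\le F_p^{(\infty)}=F$, so $F_p^{(1)}=F$ and $\mathrm{Gal}(F_p^{(1)}/F)=1$. Again by the Artin reciprocity law, $\mathrm{Cl}_p{F}\simeq\mathrm{Gal}(F_p^{(1)}/F)=1$, which is equivalent to $p\nmid h(F)$, i.e.\ $h_p{F}=1$. Since the abstract derived length of the trivial group is $0$ and the derived length of any nontrivial pro-$p$ group is at least $1$, the condition $\ell_p{F}=0$ is literally the same as $\mathrm{G}_p^\infty{F}=1$, which closes the equivalence.

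\textbf{Main obstacle.} There is essentially no obstacle here: every step rests on the Artin reciprocity isomorphism $\mathrm{Cl}_p{F}\simeq\mathrm{Gal}(F_p^{(1)}/F)$ and the tautological observation that a pro-$p$ group is trivial if and only if it has derived length $0$. The only point requiring a word of care is the induction showing that $F_p^{(1)}=F$ propagates up the entire tower, so that the single condition $\mathrm{Cl}_p{F}=1$ kills not merely the first stage but all of $F_p^{(\infty)}$; this is where one uses that $F_p^{(n+1)}$ is \emph{defined} as the first Hilbert $p$-class field of $F_p^{(n)}$.
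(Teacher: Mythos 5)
Your proof is correct and follows essentially the same route as the paper: both reduce the statement to the triviality of $\mathrm{Cl}_p{F}$ via the Artin isomorphism $\mathrm{Cl}_p{F}\simeq\mathrm{Gal}(F_p^{(1)}/F)$ and then observe that a trivial first stage kills the entire tower (the paper phrases this as a single chain of equivalences passing through the $p$-class rank and the count of unramified cyclic degree-$p$ extensions, whereas you make the propagation up the tower explicit by induction on the stages). No gap; the two arguments are interchangeable.
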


\begin{proof}
The proof consists of a sequence of equivalent statements:
The class number satisfies \(p\nmid h(F)\). \(\Longleftrightarrow\)
The \(p\)-valuation of \(h(F)\) is \(v_p(h(F))=0\). \(\Longleftrightarrow\)
The \(p\)-class number is \(\#\mathrm{Cl}_p{F}=h_p{F}=p^{v_p(h(F))}=1\). \(\Longleftrightarrow\)
The \(p\)-class group \(\mathrm{Cl}_p{F}=1\) is trivial. \(\Longleftrightarrow\)
The \(p\)-class rank \(\rho_p=\dim_{\mathbb{F}_p}(\mathrm{Cl}(F)/\mathrm{Cl}(F)^p)\) is equal to zero. \(\Longleftrightarrow\)
The number of unramified cyclic extensions \(E/F\) of degree \(p\) is \(\frac{p^{\rho_p}-1}{p-1}=\frac{p^0-1}{p-1}=\frac{1-1}{p-1}=0\). \(\Longleftrightarrow\)
The maximal unramified \(p\)-extension \(F_p^{(\infty)}\) of \(F\) coincides with \(F\). \(\Longleftrightarrow\)
The Galois group \(\mathrm{G}_p^\infty{F}=\mathrm{Gal}(F_p^{(\infty)}/F)=\mathrm{Gal}(F/F)=1\) is trivial. \(\Longleftrightarrow\)
The length of the \(p\)-class tower is \(\ell_p{F}=\mathrm{dl}(\mathrm{G}_p^\infty{F})=\mathrm{dl}(1)=0\).
\end{proof}

%--------------------------------------------------------------------------------

\noindent
Already C. F. Gauss was able to compute class numbers \(h(F)\) of quadratic fields \(F=\mathbb{Q}(\sqrt{d})\),
at a time when the concept of class field theory was not yet coined.
Nowadays, there exist extensive tables of quadratic class numbers
which even contain the structures of the associated class groups \(\mathrm{Cl}(F)\).
In \(1998\), Jacobson
\cite{Js}
covered all real quadratic fields
with positive discriminants in the range \(0<d<10^9\),
and in \(2016\), Mosunov and Jacobson
\cite{MsJs}
investigated all imaginary quadratic fields
with negative discriminants \(-10^{12}<d<0\).
Now we apply these results to class field theory.

\begin{corollary}
\label{cor:ImaginaryTrivial}
\textbf{(Statistics for \(p=3\).)}
The asymptotic proportion of imaginary quadratic fields \(F=\mathbb{Q}(\sqrt{d})\),
with negative discriminants \(d<0\),
whose class number \(h(F)\) is, respectively is not, divisible by \(p=3\)
is given as \(43.99\%\), respectively \(56.01\%\), by the heuristics of Cohen, Lenstra and Martinet.
In Table
\ref{tbl:ImaginaryTrivial},
the approximations of these theoretical limits
by relative frequencies in various ranges \(L<d<0\)
are shown.
\end{corollary}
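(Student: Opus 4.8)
The plan is to treat the statement as the combination of two independent ingredients: the theoretical limit predicted by the Cohen--Lenstra--Martinet heuristics, and the empirical approximations that populate Table \ref{tbl:ImaginaryTrivial}.

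First I would recall the precise form of the heuristic prediction. For imaginary quadratic fields, ordered by the absolute value of the discriminant, the Cohen--Lenstra philosophy asserts that the $p$-class group $\mathrm{Cl}_p{F}$ behaves like a random finite abelian $p$-group weighted inversely by the order of its automorphism group, so that the asymptotic density of fields with $p\nmid h(F)$, equivalently with trivial $p$-class group $\mathrm{Cl}_p{F}=1$, is the Euler product
\begin{equation*}
\prod_{k=1}^{\infty}\left(1-\frac{1}{p^k}\right).
\end{equation*}
Next I would specialize to $p=3$ and evaluate this product numerically: the partial products $\tfrac{2}{3},\ \tfrac{2}{3}\cdot\tfrac{8}{9},\ \tfrac{2}{3}\cdot\tfrac{8}{9}\cdot\tfrac{26}{27},\ldots$ converge rapidly, since the tail is dominated by a geometric series, to the value $0.56012\ldots$, whence the complementary density of fields with $3\mid h(F)$ equals $1-0.56012\ldots=0.43988\ldots$, which rounds to the announced $56.01\%$ and $43.99\%$.

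For the second ingredient I would invoke Theorem \ref{thm:Trivial}: the condition $3\mid h(F)$ is equivalent to $\mathrm{Cl}_3{F}\neq 1$, i.e. to $\ell_3{F}\ge 1$, a property that is read off directly from a class-number table. I would then apply this criterion to the complete list of imaginary quadratic fields with $-10^{12}<d<0$ produced by Mosunov and Jacobson \cite{MsJs}: for each cutoff $L$ appearing in Table \ref{tbl:ImaginaryTrivial}, counting the fundamental discriminants in the range $L<d<0$ whose class number is divisible by $3$ and dividing by the total count of fundamental discriminants in that range yields the tabulated relative frequency, which is then seen to approach the limit $0.43988\ldots$ as $L\to-\infty$.

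The main obstacle is conceptual rather than computational: the Cohen--Lenstra--Martinet heuristics are, at the prime $3$ and in the form invoked here, still an unproven conjecture, so the assertion can only be a derivation of the predicted limit from the heuristic model together with its empirical corroboration; no unconditional proof of convergence of the relative frequencies to $43.99\%$ is available (Davenport--Heilbronn type theorems control only the average size of the $3$-torsion, not the full distribution). Apart from that, the verification is entirely routine: assembling the class-number data, applying Theorem \ref{thm:Trivial} field by field, and carrying out the tallies that fill Table \ref{tbl:ImaginaryTrivial}.
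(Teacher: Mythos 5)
Your proposal is correct and follows essentially the same route as the paper: the corollary is not a theorem to be proved but a juxtaposition of the Cohen--Lenstra--Martinet prediction (which the paper simply cites from \cite{ChMt}, whereas you helpfully make it explicit as the Euler product \(\prod_{k\ge 1}(1-3^{-k})\approx 0.5601\)) with the empirical counts drawn from the class-number tabulations. The only minor divergence is in data provenance --- the paper attributes the \(L=-10^{6}\) row to the author's earlier computations \cite{Ma0,Ma} rather than to Mosunov--Jacobson --- and your explicit caveat that the heuristic limit is conjectural, not proven, is an accurate and welcome clarification that the paper leaves implicit.
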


%--------------------------------------------------------------------------------

\renewcommand{\arraystretch}{1.2}

\begin{table}[ht]
\caption{Imaginary quadratic fields \(F\) with non-trivial, resp. trivial, \(3\)-class tower}
\label{tbl:ImaginaryTrivial}
\begin{center}
\begin{tabular}{|c||r|c||r|c||c|}
\hline
 \(L\)        & \(\#(3\mid h(F))\)     & rel. fr.    & \(\#(3\nmid h(F))\)    & rel. fr.    & w. r. t. \(\#\)total   \\
\hline
 \(-10^6\)    &           \(121\,645\) & \(40.02\%\) &           \(182\,323\) & \(59.98\%\) &           \(303\,968\) \\
 \(-10^{11}\) &  \(13\,206\,088\,529\) & \(43.45\%\) &  \(17\,190\,266\,523\) & \(56.55\%\) &  \(30\,396\,355\,052\) \\
 \(-10^{12}\) & \(132\,584\,350\,621\) & \(43.62\%\) & \(171\,379\,200\,091\) & \(56.38\%\) & \(303\,963\,550\,712\) \\
\hline
\end{tabular}
\end{center}
\end{table}

%--------------------------------------------------------------------------------

\begin{proof}
The heuristic asymptotic limits are given in
\cite[\S\ 2, (1.1.c), p. 126]{ChMt}.
Their approximation by discriminants \(L<d<0\) with \(L=-10^6\) in
\cite[Example, p. 843]{Ma0}
and
\cite[\S\ 2, Remark, and \S\ 3, Remark, p. 77]{Ma},
where \(118\,455+3\,190=121\,645\),
is still rather far away from the limits.
In contrast, the approximations associated with the bounds \(L=-10^{11}\) and \(L=-10^{12}\) in
\cite[p. 2001]{MsJs}
are very close already.
\end{proof}

%\newpage

%--------------------------------------------------------------------------------

\subsection{Abelian single-stage towers with \(\ell_p{F}=1\)}
\label{ss:SingleStage}
\noindent
The first stage of the \(p\)-class tower of a number field \(F\)
is determined by the structure of the \(p\)-class group \(\mathrm{Cl}_p{F}\) of \(F\)
as a finite abelian \(p\)-group.
This is exactly the first order Artin pattern
\begin{equation}
\label{eqn:AP1}
\mathrm{AP}^{(1)}{F}=(\tau^{(1)}{F},\varkappa^{(1)}{F})=(\mathrm{ATI}(\mathrm{Cl}_p{F}),\ker(T_{F,F})),
\end{equation}
since the trivial \(\ker(T_{F,F})=1\) does not contain information.
However, only in the case of \(p\)-class rank one,
\(\rho_p=\dim_{\mathbb{F}_p}(\mathrm{Cl}(F)/\mathrm{Cl}(F)^p)=1\),
it is warranted that the exact length of the tower is \(\ell_p{F}=1\).
A statistical example
\cite[\S\ 2, Remark, p. 77]{Ma}
is shown in Table
\ref{tbl:ImaginaryCyclic}. 

\begin{theorem}
\label{thm:Abelian}
A number field \(F\) with non-trivial cyclic \(p\)-class group \(\mathrm{Cl}_p{F}\)
has an abelian \(p\)-class tower of exact length \(\ell_p{F}=1\),
in fact, the Galois group \(\mathrm{G}_p^\infty{F}\simeq\mathrm{G}_p^1{F}\simeq\mathrm{Cl}_p{F}\) is cyclic.
\end{theorem}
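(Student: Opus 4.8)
The plan is to exploit the fact that the $p$-tower group $\mathfrak{G}=\mathrm{G}_p^\infty{F}$ is a pro-$p$ group whose abelianization is controlled by the hypothesis. First I would invoke the Artin reciprocity law (as in Theorem \ref{thm:TauGrpFld}) to identify $\mathfrak{G}/\mathfrak{G}^\prime=\mathfrak{G}^{ab}\simeq\mathrm{Gal}(F_p^{(1)}/F)=\mathrm{G}_p^1{F}\simeq\mathrm{Cl}_p{F}$, which by assumption is a non-trivial \emph{cyclic} $p$-group.

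The key step is then a purely group-theoretic observation: a pro-$p$ group whose abelianization is procyclic is itself procyclic, hence abelian. This follows from the Burnside basis theorem for pro-$p$ groups: the minimal cardinality of a topological generating set of $\mathfrak{G}$ equals $\dim_{\mathbb{F}_p}\bigl(\mathfrak{G}/\Phi(\mathfrak{G})\bigr)$, where $\Phi(\mathfrak{G})=\overline{\mathfrak{G}^p\lbrack\mathfrak{G},\mathfrak{G}\rbrack}$ is the Frattini subgroup. Since $\mathfrak{G}/\mathfrak{G}^\prime$ is cyclic and non-trivial, its further quotient $\mathfrak{G}/\Phi(\mathfrak{G})$ is cyclic, hence of $\mathbb{F}_p$-dimension exactly one, so $\mathfrak{G}$ is topologically generated by a single element. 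Every finite quotient of a one-generated group is cyclic, hence abelian, so $\mathfrak{G}$ is abelian and $\mathfrak{G}=\mathfrak{G}^{ab}\simeq\mathrm{Cl}_p{F}$. Equivalently, one may argue stage by stage: each $\mathrm{G}_p^n{F}$ is a finite $p$-group with cyclic abelianization, hence cyclic by the finite Burnside basis theorem, so $\mathrm{G}_p^2{F}=\mathrm{G}_p^1{F}$ and the tower stabilizes at its first stage.

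Finally I would conclude the numerical assertions. Since $\mathfrak{G}$ is abelian, $\mathfrak{G}^{(2)}=\mathfrak{G}^\prime=1$, whence $F_p^{(\infty)}=F_p^{(1)}$ and $\mathrm{G}_p^\infty{F}\simeq\mathrm{G}_p^1{F}\simeq\mathrm{Cl}_p{F}$ is cyclic. The length is exactly $1$: it is at least $1$ because $\mathrm{Cl}_p{F}$ is non-trivial, so the tower is non-trivial by Theorem \ref{thm:Trivial}; and $\ell_p{F}=\mathrm{dl}(\mathfrak{G})=1$ since $\mathfrak{G}$ is abelian and non-trivial. I do not anticipate a serious obstacle here; the only point requiring a little care is that one must apply the \emph{pro-$p$} version of the Burnside basis theorem to $\mathfrak{G}$ (or, to avoid it entirely, run the stage-by-stage induction just indicated), rather than the finite-group version to $\mathfrak{G}$ itself.
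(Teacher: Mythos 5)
Your proposal is correct and rests on the same key fact as the paper's proof, namely that a nilpotent (here pro-$p$) group with cyclic abelianization is abelian; the paper cites this as well known and argues by contradiction via the second $p$-class group $\mathrm{G}_p^2{F}$, whereas you derive it from the Burnside basis theorem and apply it directly to $\mathfrak{G}$ (or stage by stage). This is essentially the same argument, just with the key lemma proved rather than quoted.
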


\begin{proof}
Suppose that \(\mathrm{Cl}_p{F}>1\) is non-trivial and cyclic.
If the \(p\)-class tower had a length \(\ell_p{F}\ge 2\),
the second \(p\)-class group \(\mathfrak{M}=\mathrm{G}_p^2{F}\)
would be a non-abelian finite \(p\)-group
with cyclic abelianization \(\mathfrak{M}/\mathfrak{M}^\prime\simeq\mathrm{Cl}_p{F}\).
However, it is well known that a nilpotent group with cyclic abelianization is abelian,
which contradicts the assumption of a length \(\ell_p{F}\ge 2\).
\end{proof}

%--------------------------------------------------------------------------------

\renewcommand{\arraystretch}{1.2}

\begin{table}[ht]
\caption{Imaginary quadratic fields \(F\) with cyclic \(3\)-class tower for \(-10^6<d<0\)}
\label{tbl:ImaginaryCyclic}
\begin{center}
\begin{tabular}{|c||r|c||c|}
\hline
 \(\mathrm{Cl}_3{F}\simeq\) & abs. fr.    & rel. fr.    & w. r. t. \(\#(\rho_3=1)\) \\
\hline
 \(C_3\)                    & \(80\,115\) & \(67.63\%\) &              \(118\,455\) \\
 \(C_9\)                    & \(26\,458\) & \(22.34\%\) &              \(118\,455\) \\
 \(C_{27}\)                 &  \(8\,974\) &  \(7.58\%\) &              \(118\,455\) \\
 \(C_{81}\)                 &  \(2\,472\) &  \(2.09\%\) &              \(118\,455\) \\
 \(C_{243}\)                &     \(393\) &  \(0.33\%\) &              \(118\,455\) \\
 \(C_{729}\)                &      \(43\) &  \(0.04\%\) &              \(118\,455\) \\
\hline
\end{tabular}
\end{center}
\end{table}

%--------------------------------------------------------------------------------

\begin{remark}
\label{rmk:SingleStage}
We interpret
the computation of abelian type invariants \(\tau^{(1)}{F}\) of the Sylow \(3\)-subgroup \(\mathrm{Cl}_3{F}\)
of the ideal class group \(\mathrm{Cl}(F)\) of a quadratic field \(F=\mathbb{Q}(\sqrt{d})\)
as the determination of the single-stage approximation \(\mathfrak{G}/\mathfrak{G}^\prime\simeq\mathrm{G}_3^1{F}\simeq\mathrm{Cl}_3{F}\)
of the \(3\)-class tower group \(\mathfrak{G}=\mathrm{G}_3^\infty{F}\) of \(F\).
This step yields complete information about the lattice of all
unramified abelian \(3\)-extensions \(E/F\) within the Hilbert \(3\)-class field \(\mathrm{F}_3^1{F}\) of \(F\).
\end{remark}

%\newpage

%--------------------------------------------------------------------------------

\subsection{Metabelian two-stage towers with \(\ell_p{F}=2\)}
\label{ss:TwoStage}
\noindent
According to the Successive Approximation Theorem
\ref{thm:SuccessiveApproximation},
the second stage \(F_p^{(2)}\) of the \(p\)-class tower of a number field \(F\)
is determined by the second order Artin pattern
\begin{equation}
\label{eqn:AP2}
\mathrm{AP}^{(2)}{F}=(\tau^{(2)}{F},\varkappa^{(2)}{F})=
(\lbrack\mathrm{ATI}(\mathrm{Cl}_p{F});(\mathrm{ATI}(\mathrm{Cl}_p{E}))_{E\in\mathrm{Lyr}_1{F}}\rbrack,\lbrack\ker(T_{F,F});(\ker(T_{F,E}))_{E\in\mathrm{Lyr}_1{F}}\rbrack).
\end{equation}

The determination of \(\mathrm{AP}^{(2)}{F}\) for a quadratic field \(F\) with \(3\)-class rank \(\rho_3=2\)
requires the computation of four \(3\)-class groups \(\mathrm{Cl}_3{E_i}\) of unramified cyclic cubic extensions \(E_1,\ldots,E_4\)
and of four transfer kernels \(\ker(T_{F,E_i})\).

Whereas Mosunov and Jacobson
\cite{MsJs}
were able to determine the class groups \(\mathrm{Cl}(F)\) of more than \(300\) billion,
precisely \(303\,963\,550\,712\),
imaginary quadratic fields \(F\) with discriminants \(-10^{12}<d<0\)
by parallel processes on multiple cores of a supercomputer
in several years of total CPU time,
it is currently definitely out of scope to compute the class groups \(\mathrm{Cl}(E_i)\), \(1\le i\le 4\),
for the \(22\,757\,307\,168\) unramified cyclic cubic extensions \(E_i/F\), of absolute degree six,
of the \(5\,689\,326\,792\) imaginary quadratic fields \(F\) with discriminants \(-10^{12}<d<0\)
and \(3\)-class rank \(\rho_3=2\).

Therefore, it must not be underestimated that Boston, Bush and Hajir
\cite{BBH}
succeeded in completing this task for the smaller range \(-10^8<d<0\)
with \(461\,925\) imaginary quadratic fields \(F\) having \(3\)-class rank \(\rho_3=2\),
and \(1\,847\,700\) associated \textit{totally complex dihedral fields} \(E_i\) of degree six
\cite[Prp. 4.1, p. 482]{Ma1}.
For this purpose the authors used the computational algebra system MAGMA
\cite{BCP,BCFS,MAGMA}
in a distributed process involving several processors with multiple cores.
\(276\,375\) of these quadratic fields \(F\)
have a \(3\)-class group \(\mathrm{Cl}_3{F}\simeq C_3\times C_3\).

Imaginary quadratic fields \(F=\mathbb{Q}(\sqrt{d})\) with negative discriminants \(d<0\) 
are the simplest number fields with respect to their unit group \(U_F\),
which is a finite torsion group of Dirichlet unit rank zero.
This fact has considerable consequences for their \(p\)-class tower groups,
according to the Shafarevich theorem
\cite{Sh},
corrected in
\cite[Thm. 5.1, p. 28]{Ma10},
\cite{Ma10a}.

\begin{theorem}
\label{thm:ImaginaryTwoStage}
Among the finite \(3\)-groups \(G\)
with elementary bicyclic abelianization \(G/G^\prime\simeq C_3\times C_3\) of rank two,
there exist only two metabelian groups with GI-action and relation rank \(d_2{G}=2\)
(so-called Schur \(\sigma\)-groups
\cite{KoVe,BBH}),
namely \(\langle 243,5\rangle\) and \(\langle 243,7\rangle\).
\begin{enumerate}
\item
These are the groups of smallest order
which are admissible as \(3\)-class tower groups \(G\simeq\mathrm{G}_3^\infty{F}\)
of imaginary quadratic fields \(F\) with \(3\)-class group \(\mathrm{Cl}_3{F}\simeq C_3\times C_3\).
\item
Generally, for any number field \(F\), these groups are determined uniquely by the second order Artin pattern.
\begin{enumerate}
\item
If \(\mathrm{AP}^{(2)}{F}=(\lbrack 1^2;(21,21,1^3,21)\rbrack,\lbrack 1;(2241)\rbrack)\)
then \(\mathrm{G}_3^\infty{F}\simeq\langle 243,5\rangle\).
\item
If \(\mathrm{AP}^{(2)}{F}=(\lbrack 1^2;(1^3,21,1^3,21)\rbrack,\lbrack 1;(4224)\rbrack)\)
then \(\mathrm{G}_3^\infty{F}\simeq\langle 243,7\rangle\).
\end{enumerate}
\item
The actual distribution of these \(3\)-class tower groups \(G\)
among the \(276\,375\) imaginary quadratic fields \(F=\mathbb{Q}(\sqrt{d})\)
with \(3\)-class group \(\mathrm{Cl}_3{F}\simeq C_3\times C_3\) and
discriminants \(-10^8<d<0\) is presented in
Table
\ref{tbl:ImaginaryTwoStage}.
\end{enumerate}
\end{theorem}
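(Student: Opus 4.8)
The statement is proved by combining one arithmetic input with one purely group-theoretic classification. The arithmetic input is the theorem of Shafarevich, in the form corrected by Koch and Venkov \cite{Sh,Ma10,Ma10a,KoVe}: an imaginary quadratic field $F$ has torsion-free unit rank zero, so its $3$-tower group $\mathfrak{G}=\mathrm{G}_3^\infty{F}$, which satisfies $\mathfrak{G}/\mathfrak{G}^\prime\simeq\mathrm{Cl}_3{F}\simeq C_3\times C_3$, has generator rank $d_1{\mathfrak{G}}=2$, relation rank $d_2{\mathfrak{G}}=2$, and admits a generator-inverting automorphism; equivalently, $\mathfrak{G}$ is a Schur $\sigma$-group with balanced presentation \cite{BBH}. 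The plan is: (i) classify abstractly the finite $3$-groups relevant here; (ii) intersect that list with the $\sigma$-constraint to obtain the opening claim and Part (1); (iii) compute the second order Artin patterns and run the pattern-recognition argument for Part (2); (iv) evaluate that pattern on the available data for Part (3).

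For step (i) I would enumerate the finite $3$-groups $G$ with $G/G^\prime\simeq C_3\times C_3$ layer by layer in the descendant tree rooted at the elementary abelian group $\langle 9,2\rangle$, using the $p$-group generation algorithm (or the known classification of these groups), and for each vertex read off the relation rank $d_2{G}$ from a polycyclic presentation and test for a generator-inverting automorphism. One finds that $\langle 9,2\rangle$ has $d_2=3$; that the two groups of order $27$ and every group of order $81$ with this abelianization still has $d_2\ge 3$ or admits no such automorphism; that at order $243$ exactly the two groups $\langle 243,5\rangle$ and $\langle 243,7\rangle$, both of nilpotency class $3$ and coclass $2$, satisfy $d_2=2$ together with the GI-property, while the remaining order-$243$ groups with $G/G^\prime\simeq C_3\times C_3$ do not; and, finally, that no metabelian example of larger order occurs, because along every infinite branch of the tree the relation rank eventually exceeds $2$. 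This proves the opening sentence. Combined with the arithmetic input it proves Part (1): every $3$-group of order $<243$ with abelianization $C_3\times C_3$ is metabelian and was just seen not to be a Schur $\sigma$-group, so $\lvert\mathfrak{G}\rvert\ge 243$; and $\langle 243,5\rangle$, $\langle 243,7\rangle$ are themselves admissible and are realized by fields counted in Table \ref{tbl:ImaginaryTwoStage}.

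For step (iii), in the forward direction I would compute $\mathrm{AP}^{(2)}$ of each of the two groups by applying the explicit transfer law \eqref{eqn:Transfer} to the four maximal subgroups $S\in\mathrm{Lyr}_1{G}$ and to their maximal subgroups; this finite computation returns $\mathrm{AP}^{(2)}\langle 243,5\rangle=(\lbrack 1^2;(21,21,1^3,21)\rbrack,\lbrack 1;(2241)\rbrack)$ and $\mathrm{AP}^{(2)}\langle 243,7\rangle=(\lbrack 1^2;(1^3,21,1^3,21)\rbrack,\lbrack 1;(4224)\rbrack)$. For the converse, suppose $F$ is any number field with, say, $\mathrm{AP}^{(2)}{F}=(\lbrack 1^2;(21,21,1^3,21)\rbrack,\lbrack 1;(2241)\rbrack)$. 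By the Successive Approximation Theorem \ref{thm:SuccessiveApproximation}, the metabelianization $\mathfrak{M}=\mathrm{G}_3^2{F}\simeq\mathfrak{G}/\mathfrak{G}^{\prime\prime}$ is a metabelian $3$-group with $\mathfrak{M}/\mathfrak{M}^\prime\simeq C_3\times C_3$ whose second order Artin pattern agrees with that of $F$ (exactly in the target component, up to equivalence in the transfer-kernel component); by step (i) the unique metabelian $3$-group carrying this pattern is $\langle 243,5\rangle$, so $\mathfrak{M}\simeq\langle 243,5\rangle$. It remains to show $\mathfrak{G}=\mathfrak{M}$. The pro-$3$ group $\mathfrak{G}$ is a Schur $\sigma$-group with $\mathfrak{G}/\mathfrak{G}^{\prime\prime}\simeq\langle 243,5\rangle$; since the descendant tree of $\langle 243,5\rangle$ contains no Schur $\sigma$-group with balanced presentation besides its root, $\mathfrak{G}$ cannot lie strictly above $\langle 243,5\rangle$, whence $\mathfrak{G}^{\prime\prime}=1$ and $\mathrm{G}_3^\infty{F}\simeq\langle 243,5\rangle$. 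The case $\mathrm{AP}^{(2)}{F}=(\lbrack 1^2;(1^3,21,1^3,21)\rbrack,\lbrack 1;(4224)\rbrack)$, leading to $\mathrm{G}_3^\infty{F}\simeq\langle 243,7\rangle$, is entirely parallel.

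Finally, step (iv), i.e.\ Part (3), is purely computational: for each of the $276\,375$ quadratic fields $F$ with $\mathrm{Cl}_3{F}\simeq C_3\times C_3$ and $-10^8<d<0$ one evaluates $\mathrm{AP}^{(2)}{F}$ by the principalization algorithm via class group structure (equivalently, reads it off from \cite{BBH}) and sorts by the two patterns of Part (2), which yields the counts in Table \ref{tbl:ImaginaryTwoStage}. I expect the main obstacle to be the group-theoretic input invoked twice in the preceding paragraph: that inside the descendant trees of $\langle 243,5\rangle$ and of $\langle 243,7\rangle$ no finite $3$-group --- metabelian or not --- admits a generator-inverting automorphism together with a balanced presentation, so that the tower is forced to terminate already at the metabelian stage; and, relatedly, that the classification in step (i) genuinely produces no metabelian $\sigma$-group of order exceeding $243$ with $d_2=2$. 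Establishing these terminality statements rigorously is the crux; the Artin-transfer computations and the arithmetic bookkeeping in Parts (1) and (3) are routine.
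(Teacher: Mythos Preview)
Your overall strategy matches the paper's: search the descendant tree of $\langle 9,2\rangle$ for metabelian vertices with $d_2=2$, filter by the GI-action, invoke Shafarevich for Part~(1), and cite the computations of \cite{BBH} for Part~(3). The paper in fact finds \emph{three} metabelian hits with $d_2=2$, namely $\langle 27,4\rangle$, $\langle 243,5\rangle$, $\langle 243,7\rangle$, and then discards $\langle 27,4\rangle$ for lack of a GI-automorphism; your disjunctive phrasing (``$d_2\ge 3$ or no GI'') is logically equivalent but obscures that $\langle 27,4\rangle$ already has a balanced presentation.

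There is, however, a genuine slip in your argument for Part~(2). You write ``The pro-$3$ group $\mathfrak{G}$ is a Schur $\sigma$-group'' while $F$ is assumed to be an \emph{arbitrary} number field; but the Schur $\sigma$-constraint $d_2\mathfrak{G}=d_1\mathfrak{G}$ comes from Shafarevich with unit rank zero and is available only for imaginary quadratic $F$. The termination $\mathfrak{G}^{\prime\prime}=1$ must instead be deduced from an intrinsic property of the \emph{target} groups: $\langle 243,5\rangle$ and $\langle 243,7\rangle$ are Schur groups in the absolute sense (balanced presentation, trivial $p$-multiplicator), hence they are terminal leaves of the descendant tree with no immediate descendants whatsoever. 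Consequently any finite $3$-group $G$ with $G/G^{\prime\prime}$ isomorphic to one of them is forced to coincide with it (a central-extension argument using $H_2=0$), independently of any arithmetic hypothesis on $F$. Once you replace ``$\mathfrak{G}$ is a Schur $\sigma$-group'' by ``$\langle 243,5\rangle$ is a Schur group, hence has trivial cover'', your paragraph for Part~(2) goes through; as you yourself note, making this terminality statement precise is exactly the crux.
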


%--------------------------------------------------------------------------------

\renewcommand{\arraystretch}{1.2}

\begin{table}[ht]
\caption{Frequencies of metabelian \(3\)-class tower groups \(G\) for \(-10^8<d<0\)}
\label{tbl:ImaginaryTwoStage}
\begin{center}
\begin{tabular}{|c|r||r|c||r|c||c|r|}
\hline
 \(G\simeq\)              & abs. fr.    & rel. fr.    & w. r. t.     & rel. fr.    & w. r. t.      & measure \cite{BBH}         & \(\lvert d\rvert_{\text{min}}\) \\
\hline
 \(\langle 243,5\rangle\) & \(83\,353\) & \(30.16\%\) & \(276\,375\) & \(18.04\%\) & \(461\,925\)  & \(128/729\approx 17.56\%\) &  \(4\,027\)                     \\
 \(\langle 243,7\rangle\) & \(41\,398\) & \(14.98\%\) & \(276\,375\) &  \(8.96\%\) & \(461\,925\)  & \(64/729\approx 8.78\%\)   & \(12\,131\)                     \\
\hline
\end{tabular}
\end{center}
\end{table}

%--------------------------------------------------------------------------------

\begin{proof}
All finite \(3\)-groups \(G\) with abelianization \(G/G^\prime\simeq C_3\times C_3\)
are vertices of the descendant tree \(\mathcal{T}(R)\)
with abelian root \(R=\langle 9,2\rangle\simeq C_3\times C_3\).
A search for metabelian vertices with relation rank \(d_2{G}=2\)
in this tree yields three hits
\(\langle 27,4\rangle\), \(\langle 243,5\rangle\), and \(\langle 243,7\rangle\),
but only the latter two of them possess a GI-action.

The abelianization \(G/G^\prime\) of a finite \(3\)-group \(G\)
which is realized as the \(3\)-class tower group \(\mathrm{G}_p^\infty{F}\)
of an algebraic number field \(F\)
is isomorphic to the \(3\)-class group \(\mathrm{Cl}_3{F}\) of \(F\).
When \(F\) is imaginary quadratic, it possesses signature \((r_1,r_2)=(0,1)\)
and torsionfree Dirichlet unit rank \(r=r_1+r_2-1=0\).
If \(G/G^\prime\simeq\mathrm{Cl}_3{F}\simeq C_3\times C_3\),
then the generator rank of \(G\) is \(d_1{G}=2\)
and the Shafarevich theorem implies bounds for the relation rank
\(2=d_1{G}\le d_2{G}\le d_1{G}+r=2\).

The entries of Table
\ref{tbl:ImaginaryTwoStage}
have been taken from
\cite{BBH}.
\end{proof}

%--------------------------------------------------------------------------------

More recently, Boston, Bush and Hajir
\cite{BBH2}
used  MAGMA
\cite{MAGMA}
for computing the class groups of the \(481\,756\) real quadratic fields \(F\)
having \(3\)-class rank \(\rho_3=2\) and discriminants in the range \(0<d<10^9\),
and the class groups of the \(1\,927\,024\) associated \textit{totally real dihedral fields} \(E_i\) of degree six,
arising from unramified cyclic cubic extensions \(E_i/F\)
\cite[Prp. 4.1, p. 482]{Ma1}.
\(415\,698\) of these quadratic fields \(F\)
have a \(3\)-class group \(\mathrm{Cl}_3{F}\simeq C_3\times C_3\)
(\(415\,699\) according to
\cite[Tbl. 7]{Js}).

Real quadratic fields \(F=\mathbb{Q}(\sqrt{d})\) with positive discriminants \(d>0\) 
are the second simplest number fields with respect to their unit group \(U_F\),
which is an infinite group of torsionfree Dirichlet unit rank one.
Again, there are remarkable consequences for their \(p\)-tower groups,
by the Shafarevich theorem
\cite[Thm. 5.1, p. 28]{Ma10}.

\begin{theorem}
\label{thm:RealTwoStage}
Among the finite \(3\)-groups \(G\)
with elementary bicyclic abelianization \(G/G^\prime\simeq C_3\times C_3\) of rank two,
there exist infinitely many metabelian groups with GI-action and relation rank \(d_2{G}=3\)
(so-called Schur\(+1\) \(\sigma\)-groups
\cite{BBH2}),
but only three of minimal order \(3^4\),
namely \(\langle 81,7\rangle\),\(\langle 81,8\rangle\) and \(\langle 81,10\rangle\).
\begin{enumerate}
\item
These are the groups of smallest order
which are admissible as \(3\)-class tower groups \(G\simeq\mathrm{G}_3^\infty{F}\)
of real quadratic fields \(F\) with \(3\)-class group \(\mathrm{Cl}_3{F}\simeq C_3\times C_3\).
\item
Generally, for any number field \(F\), these groups are determined uniquely by the second order Artin pattern.
\begin{enumerate}
\item
If \(\mathrm{AP}^{(2)}{F}=(\lbrack 1^2;(1^3,1^2,1^2,1^2)\rbrack,\lbrack 1;(2000)\rbrack)\)
then \(\mathrm{G}_3^\infty{F}\simeq\langle 81,7\rangle\).
\item
If \(\mathrm{AP}^{(2)}{F}=(\lbrack 1^2;(21,1^2,1^2,1^2)\rbrack,\lbrack 1;(2000)\rbrack)\)
then \(\mathrm{G}_3^\infty{F}\simeq\langle 81,8\rangle\).
\item
If \(\mathrm{AP}^{(2)}{F}=(\lbrack 1^2;(21,1^2,1^2,1^2)\rbrack,\lbrack 1;(1000)\rbrack)\)
then \(\mathrm{G}_3^\infty{F}\simeq\langle 81,10\rangle\).
\end{enumerate}
\item
The actual distribution of these \(3\)-class tower groups \(G\)
among the \(415\,698\) real quadratic fields \(F=\mathbb{Q}(\sqrt{d})\)
with \(3\)-class group \(\mathrm{Cl}_3{F}\simeq C_3\times C_3\) and
discriminants \(0<d<10^9\) is presented in
Table
\ref{tbl:RealTwoStage9}.
Additionally, the frequencies of the groups \(\langle 243,5\rangle\) and \(\langle 243,7\rangle\) in Theorem
\ref{thm:ImaginaryTwoStage}
are given.
\end{enumerate}
\end{theorem}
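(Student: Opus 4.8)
The plan is to re-run the argument of Theorem~\ref{thm:ImaginaryTwoStage}, with the Shafarevich bound adapted to torsionfree Dirichlet unit rank one, and then to add the step that separates, by their transfer kernels, the two groups that share a second order target invariant. First I would recall, as in the proof of Theorem~\ref{thm:ImaginaryTwoStage}, that every finite \(3\)-group \(G\) with \(G/G^\prime\simeq C_3\times C_3\) has maximal class and occurs as a vertex of the descendant tree \(\mathcal{T}(\langle 9,2\rangle)\). For a real quadratic field \(F\) with \(\mathrm{Cl}_3{F}\simeq C_3\times C_3\) the signature is \((r_1,r_2)=(2,0)\), hence the torsionfree unit rank is \(r=r_1+r_2-1=1\) and \(\zeta_3\notin F\); the Shafarevich theorem \cite{Sh}, in the corrected form \cite[Thm. 5.1, p. 28]{Ma10}, then forces \(2=d_1(G)\le d_2(G)\le d_1(G)+r=3\) for \(G=\mathrm{G}_3^\infty{F}\). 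Since the case \(d_2(G)=2\) is the Schur \(\sigma\)-group situation already treated in Theorem~\ref{thm:ImaginaryTwoStage}, the new phenomenon is \(d_2(G)=3\), i.e.\ \(G\) is a Schur\(+1\) \(\sigma\)-group. Scanning the small orders along \(\mathcal{T}(\langle 9,2\rangle)\), working from polycyclic power-commutator presentations or with \cite{MAGMA}, one finds \(d_2(\langle 27,3\rangle)=4\) and \(d_2(\langle 27,4\rangle)=2\), so no vertex of order \(27\) qualifies; among the four vertices of order \(81\) the groups \(\langle 81,7\rangle\), \(\langle 81,8\rangle\), \(\langle 81,10\rangle\) carry a GI-automorphism of order two (inverting the abelianization) and satisfy \(d_2=3\), whereas \(\langle 81,9\rangle\) fails one of these two conditions. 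This yields the minimality statement; the existence of infinitely many Schur\(+1\) \(\sigma\)-groups is established in \cite{BBH2} by exhibiting an infinite descending chain on the coclass tree.

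Next I would verify the characterization by the second order Artin pattern. For each of \(\langle 81,7\rangle\), \(\langle 81,8\rangle\), \(\langle 81,10\rangle\) the four maximal subgroups \(S_1,\dots,S_4\in\mathrm{Lyr}_1{G}\) are listed, their abelian type invariants \(\mathrm{ATI}(S_i/S_i^\prime)\) read off, and the transfer kernels \(\ker(T_{G,S_i})\) computed through the mapping law \eqref{eqn:Transfer}; this reproduces the three displayed patterns \eqref{eqn:AP2} and makes explicit that \(\langle 81,8\rangle\) and \(\langle 81,10\rangle\) agree in the target component \([1^2;(21,1^2,1^2,1^2)]\) and are told apart only by the inequivalent transfer kernel types \((2000)\) and \((1000)\). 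For the converse, let \(F\) be any number field whose \(\mathrm{AP}^{(2)}{F}\) is one of the three. By the Successive Approximation Theorem~\ref{thm:SuccessiveApproximation}, together with Theorem~\ref{thm:TauGrpFld}, the metabelianization \(\mathfrak{M}=\mathrm{G}_3^2{F}\) then has exactly this Artin pattern of second order. Using the classification of metabelian maximal-class \(3\)-groups one checks that, among all finite \(3\)-groups with abelianization \(C_3\times C_3\), the target component \([1^2;(1^3,1^2,1^2,1^2)]\) is realized only by \(\langle 81,7\rangle\), and that \([1^2;(21,1^2,1^2,1^2)]\) together with the transfer kernel type \((2000)\), resp.\ \((1000)\), is realized only by \(\langle 81,8\rangle\), resp.\ \(\langle 81,10\rangle\) (larger members of these families have a strictly larger second order target invariant, one maximal subgroup acquiring a bigger abelianization). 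Since each of the three groups is metabelian and admits no descendant sharing its second order Artin pattern, it coincides with \(\mathrm{G}_3^\infty{F}\), so that \(\ell_3{F}=2\).

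For the numerical part I would take the class groups of the \(415\,698\) real quadratic fields \(F\) with \(\mathrm{Cl}_3{F}\simeq C_3\times C_3\) and \(0<d<10^9\), and of the \(1\,927\,024\) associated unramified cyclic cubic extensions, as computed in \cite{BBH2} with \cite{MAGMA}, read off \(\tau^{(2)}{F}\) — and, whenever the target component equals \([1^2;(21,1^2,1^2,1^2)]\), also \(\varkappa^{(2)}{F}\) — for each field, and sort the fields according to the trichotomy above; the frequencies of \(\langle 243,5\rangle\) and \(\langle 243,7\rangle\) fall out of the same computation and are cross-checked against Theorem~\ref{thm:ImaginaryTwoStage}. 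This produces Table~\ref{tbl:RealTwoStage9}.

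I expect the hard part to be the converse in the second step: one must be certain that the three second order Artin patterns occur for \emph{no} other finite \(3\)-group with abelianization \(C_3\times C_3\) — in particular for no high-order vertex deep in \(\mathcal{T}(\langle 9,2\rangle)\) — and that each of the three groups is genuinely terminal with respect to its pattern. The Shafarevich bound \(d_2\le 3\) is not available for an arbitrary number field, so this must rest on the commutator calculus governing how \(\tau^{(2)}\) grows along the coclass tree, supplemented if necessary by an exhaustive finite search with \cite{MAGMA}.
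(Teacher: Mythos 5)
Your proposal follows essentially the same route as the paper: a search of the descendant tree \(\mathcal{T}(\langle 9,2\rangle)\) for metabelian vertices of minimal order with GI-action and relation rank \(d_2=3\), the Shafarevich bound \(2=d_1\le d_2\le d_1+r=3\) coming from unit rank \(r=1\) for real quadratic fields, and the citation of \cite{BBH2} for the statistics in Table~\ref{tbl:RealTwoStage9}. (The paper's printed proof lists the three hits as \(\langle 27,7\rangle\), \(\langle 27,8\rangle\), \(\langle 27,10\rangle\), an evident typo for the order-\(81\) groups you correctly name.) In fact you go further than the paper on item (2): the paper's proof never argues that the three second order Artin patterns single out these groups among \emph{all} finite \(3\)-groups with abelianization \(C_3\times C_3\), whereas you explicitly isolate this converse as the delicate step and sketch how it rests on the growth of \(\tau^{(2)}\) along the tree plus a finite search; that is a genuine improvement in rigor over the published argument.

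One sentence of yours is false and should be removed: it is not true that every finite \(3\)-group \(G\) with \(G/G^\prime\simeq C_3\times C_3\) has maximal class — the Schur \(\sigma\)-groups \(\langle 243,5\rangle\) and \(\langle 243,7\rangle\) of Theorem~\ref{thm:ImaginaryTwoStage} already have coclass \(2\), and the paper only asserts that all such groups are vertices of \(\mathcal{T}(\langle 9,2\rangle)\). The error is not load-bearing here (at order \(81\) the groups with bicyclic abelianization do all have maximal class, so your minimality scan is unaffected), but as stated it would undermine the uniqueness argument in your second paragraph, which must range over groups of arbitrary coclass, not just the Blackburn groups of coclass one.
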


%--------------------------------------------------------------------------------

\renewcommand{\arraystretch}{1.2}

\begin{table}[ht]
\caption{Frequencies of metabelian \(3\)-class tower groups \(G\) for \(0<d<10^9\)}
\label{tbl:RealTwoStage9}
\begin{center}
\begin{tabular}{|c|r||r|c||r|c||c|r|}
\hline
 \(G\simeq\)                & abs. fr.     & rel. fr.    & w. r. t.     & rel. fr.    & w. r. t.      & measure  \cite{BBH2}          & \(d_{\text{min}}\) \\
\hline
 \(\langle 81,7\rangle\)    & \(122\,955\) & \(29.58\%\) & \(415\,698\) & \(25.52\%\) & \(481\,756\)  & \(1664/6561\approx 25.36\%\)  & \(142\,097\)       \\
\hline
 \(\langle 81,8\rangle\) or & \(208\,236\) & \(50.09\%\) & \(415\,698\) & \(43.22\%\) & \(481\,756\)  & \(8320/19683\approx 42.27\%\) &  \(32\,009\)       \\
 \(\langle 81,10\rangle\)   &              &             &              &             &               &                               &                    \\
\hline
 \(\langle 243,5\rangle\)   &  \(13\,712\) &  \(3.30\%\) & \(415\,698\) &  \(2.85\%\) & \(481\,756\)  & \(1664/59049\approx 2.82\%\)  & \(422\,573\)       \\
\hline
 \(\langle 243,7\rangle\)   &   \(6\,691\) &  \(1.61\%\) & \(415\,698\) &  \(1.39\%\) & \(481\,756\)  & \(832/59049\approx 1.41\%\)   & \(631\,769\)       \\
\hline
\end{tabular}
\end{center}
\end{table}

%--------------------------------------------------------------------------------

\begin{proof}
A search for metabelian vertices \(G\) of minimal order with relation rank \(d_2{G}=3\)
in the descendant tree \(\mathcal{T}(R)\)
with abelian root \(R=\langle 9,2\rangle\simeq C_3\times C_3\)
yields three hits
\(\langle 27,7\rangle\), \(\langle 27,8\rangle\), and \(\langle 27,10\rangle\).
All of them possess a GI-action.

The abelianization \(G/G^\prime\) of a finite \(3\)-group \(G\)
which is realized as the \(3\)-class tower group \(\mathrm{G}_p^\infty{F}\)
of an algebraic number field \(F\)
is isomorphic to the \(3\)-class group \(\mathrm{Cl}_3{F}\) of \(F\).
When \(F\) is real quadratic, it possesses signature \((r_1,r_2)=(2,0)\)
and torsionfree Dirichlet unit rank \(r=r_1+r_2-1=1\).
If \(G/G^\prime\simeq\mathrm{Cl}_3{F}\simeq C_3\times C_3\),
then the generator rank of \(G\) is \(d_1{G}=2\)
and the Shafarevich theorem implies bounds for the relation rank
\(2=d_1{G}\le d_2{G}\le d_1{G}+r=3\).

The entries of Table
\ref{tbl:RealTwoStage9}
have been taken from
\cite{BBH2}.
\end{proof}

%--------------------------------------------------------------------------------

In
\cite{BBH2},
Boston, Bush and Hajir only computed the first component of the second order Artin pattern
\(\mathrm{AP}^{(2)}{F}=(\tau^{(2)}{F},\varkappa^{(2)}{F})\) in Formula
\eqref{eqn:AP2},
that is, the abelian type invariants \(\tau^{(2)}{F}\) of second order
of real quadratic fields \(F\) with discriminants \(0<d<10^9\).
Determining the second component \(\varkappa^{(2)}{F}\), the transfer kernel type of \(F\),
is considerably harder with respect to the computational expense.
Consequently, the most extensive numerical results on transfer kernels available currently,
have been computed by ourselves for the smaller ranges \(0<d<10^8\) in
\cite{Ma14,Ma14b},
and, even computing third order Artin patterns, for \(0<d<10^7\) in
\cite{Ma17,Ma17b}.
With the aid of these results, we now illustrate that
the transfer kernels \(\ker(T_{F,E})\) of \(3\)-class extensions
\(T_{F,E}:\mathrm{Cl}_3{F}\to\mathrm{Cl}_3{E}\)
from real quadratic fields \(F\) to unramified cyclic cubic extensions \(E/F\)
are capable of narrowing down the number of contestants
for the \(3\)-tower group \(\mathrm{G}_3^\infty{F}\) significantly,
and thus of refining the statistics in
\cite{BBH2}.

%--------------------------------------------------------------------------------

\renewcommand{\arraystretch}{1.2}

\begin{table}[ht]
\caption{Frequencies of metabelian \(3\)-class tower groups \(G\) for \(0<d<10^8\) resp. \(10^7\)}
\label{tbl:RealTwoStage8}
\begin{center}
\begin{tabular}{|c|r||r|c||r|}
\hline
 \(G\simeq\)                  & abs. fr.     & rel. fr.    & w. r. t.   & \(d_{\text{min}}\) \\
\hline
 \(\langle 81,7\rangle\)      & \(10\,244\) & \(29.58\%\) & \(34\,631\) & \(142\,097\)       \\
 \(\langle 81,8\rangle\)      & \(10\,514\) & \(30.36\%\) & \(34\,631\) &  \(32\,009\)       \\
 \(\langle 81,10\rangle\)     &  \(7\,104\) & \(20.51\%\) & \(34\,631\) &  \(72\,329\)       \\
\hline
 \(\langle 729,96\rangle\)    &     \(242\) &  \(0.70\%\) & \(34\,631\) & \(790\,085\)       \\
\hline
 \(\langle 729,97\rangle\) or &     \(713\) &  \(2.06\%\) & \(34\,631\) & \(494\,236\)       \\
 \(\langle 729,98\rangle\)    &             &             &             &                    \\
\hline
 \(\langle 729,99\rangle\)    &      \(66\) &  \(2.56\%\) &  \(2\,576\) &  \(62\,501\)       \\
 \(\langle 729,100\rangle\)   &      \(42\) &  \(1.63\%\) &  \(2\,576\) & \(152\,949\)       \\
 \(\langle 729,101\rangle\)   &      \(42\) &  \(1.63\%\) &  \(2\,576\) & \(252\,977\)       \\
\hline
\end{tabular}
\end{center}
\end{table}

%--------------------------------------------------------------------------------

\begin{corollary}
\label{cor:RealTwoStage}
\begin{enumerate}
\item
If \(\mathrm{AP}^{(2)}{F}=(\lbrack 1^2;(32,1^2,1^2,1^2)\rbrack,\lbrack 1;(1000)\rbrack)\)
then \(\mathrm{G}_3^\infty{F}\simeq\langle 729,96\rangle\).
\item
If \(\mathrm{AP}^{(2)}{F}=(\lbrack 1^2;(32,1^2,1^2,1^2)\rbrack,\lbrack 1;(2000)\rbrack)\)
then \(\mathrm{G}_3^\infty{F}\simeq\langle 729,i\rangle\) with \(i\in\lbrace 97,98\rbrace\).
\item
If \(\mathrm{AP}^{(2)}{F}=(\lbrack 1^2;(2^2,1^2,1^2,1^2)\rbrack,\lbrack 1;(0000)\rbrack)\)
then \(\mathrm{G}_3^\infty{F}\simeq\langle 729,i\rangle\) with \(i\in\lbrace 99,100,101\rbrace\).
\end{enumerate}
\item
The actual distribution of these \(3\)-class tower groups \(G\)
among the \(34\,631\), respectively \(2\,576\), real quadratic fields \(F=\mathbb{Q}(\sqrt{d})\)
with \(3\)-class group \(\mathrm{Cl}_3{F}\simeq C_3\times C_3\) and
discriminants \(0<d<10^8\), respectively \(0<d<10^7\), is presented in
Table
\ref{tbl:RealTwoStage8}.
\end{corollary}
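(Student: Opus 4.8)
The plan is to reduce the identification of the \(3\)-tower group \(\mathrm{G}_3^\infty{F}\) to a finite search in the descendant tree of finite \(3\)-groups with elementary bicyclic abelianization, exactly along the lines of the proofs of Theorems \ref{thm:ImaginaryTwoStage} and \ref{thm:RealTwoStage}, but now using the transfer kernel type \(\varkappa^{(2)}{F}\) as an additional separating invariant. First, by the Successive Approximation Theorem \ref{thm:SuccessiveApproximation}, the hypotheses of items (1)--(3) prescribe the full second order Artin pattern \(\mathrm{AP}^{(2)}{\mathfrak{M}}\) of the second \(3\)-class group \(\mathfrak{M}:=\mathrm{G}_3^2{F}\simeq\mathfrak{G}/\mathfrak{G}^{\prime\prime}\) of \(F\), where \(\mathfrak{G}:=\mathrm{G}_3^\infty{F}\): indeed \(\tau^{(2)}{\mathfrak{M}}=\tau^{(2)}{F}\) and \(\varkappa(\mathfrak{M})\sim\varkappa(F)\). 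Since \(\mathrm{Cl}_3{F}\simeq C_3\times C_3\), the metabelian group \(\mathfrak{M}\) is a vertex of the descendant tree \(\mathcal{T}(\langle 9,2\rangle)\), and I would enumerate all metabelian vertices whose first Artin-pattern component \(\tau^{(2)}\) equals the prescribed one: the abelian type invariants of the maximal subgroups bound the nilpotency class and thereby pin the order of \(\mathfrak{M}\) down to \(3^6\) in each case, and sieving the resulting finitely many candidates by the prescribed transfer kernel type leaves exactly \(\langle 729,96\rangle\) in (1), \(\langle 729,97\rangle\) and \(\langle 729,98\rangle\) in (2), and \(\langle 729,99\rangle,\langle 729,100\rangle,\langle 729,101\rangle\) in (3).

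It then remains to pass from \(\mathfrak{M}\) to the full tower group \(\mathfrak{G}\), that is, to show the tower has length exactly \(\ell_3{F}=2\) so that \(\mathfrak{G}\simeq\mathfrak{M}\). Here I would invoke the Shafarevich theorem \cite{Ma10} in the form already used for Theorem \ref{thm:RealTwoStage}: a real quadratic field has torsionfree Dirichlet unit rank \(r=1\), so \(\mathfrak{G}\) satisfies \(2=d_1{\mathfrak{G}}\le d_2{\mathfrak{G}}\le d_1{\mathfrak{G}}+r=3\) and carries a GI-action. Each of the six listed groups is a metabelian \(\sigma\)-group of relation rank \(3\), i.e.\ a Schur\(+1\) \(\sigma\)-group, hence admissible; to conclude \(\mathfrak{G}=\mathfrak{M}\) one verifies that in every one of the six cases \(\mathfrak{M}\) admits no proper non-metabelian cover with generator rank \(2\), relation rank \(\le 3\), and a GI-action, which is a finite computation with the \(p\)-group generation algorithm carried out in \cite{BBH2}. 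This termination step is the one I expect to be the main obstacle: in contrast to the imaginary quadratic situation of Theorem \ref{thm:ImaginaryTwoStage}, where the balanced relation rank \(d_2=d_1\) of a Schur \(\sigma\)-group already closes the tower, a Schur\(+1\) \(\sigma\)-group need not even be finite, so the fact that the tower stops at stage \(2\) has to be checked for each metabelianization separately rather than being automatic.

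Finally, for the distribution recorded in Table \ref{tbl:RealTwoStage8} I would match the computed Artin patterns against the classification of (1)--(3): computing \(\mathrm{AP}^{(2)}{F}\) for all real quadratic \(F=\mathbb{Q}(\sqrt{d})\) with \(\mathrm{Cl}_3{F}\simeq C_3\times C_3\) and \(0<d<10^8\) separates \(\langle 729,96\rangle\) from the pair \(\{\langle 729,97\rangle,\langle 729,98\rangle\}\) and from the triple \(\{\langle 729,99\rangle,\langle 729,100\rangle,\langle 729,101\rangle\}\), while the finer third order Artin pattern \(\mathrm{AP}^{(3)}{F}\), available for \(0<d<10^7\), splits that triple into its three individual members; these are precisely the computations of \cite{Ma14,Ma14b} and \cite{Ma17,Ma17b}, from which the absolute and relative frequencies and the minimal discriminants \(d_{\text{min}}\) in the table are read off.
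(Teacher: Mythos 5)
The paper itself supplies no proof of this corollary: it is offered as a by-product of the tree searches behind Theorems \ref{thm:ImaginaryTwoStage} and \ref{thm:RealTwoStage} together with the computations cited in \cite{Ma14,Ma14b,Ma17,Ma17b}, so your reconstruction can only be measured against that implied methodology. Your skeleton (recover \(\mathfrak{M}=\mathrm{G}_3^2{F}\) from \(\mathrm{AP}^{(2)}{F}\) via Theorem \ref{thm:SuccessiveApproximation}, search \(\mathcal{T}(\langle 9,2\rangle)\), then pass to \(\mathfrak{G}=\mathrm{G}_3^\infty{F}\)) is the right one, but two steps have genuine gaps.

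First, in case (3) the claim that \(\tau^{(2)}{F}\) ``pins the order of \(\mathfrak{M}\) down to \(3^6\)'' is not correct: the capable coclass-one group \(G_0^5(0,0)\) of order \(3^5\), whose two-step centralizer is the abelian group \(C_9\times C_9\), has exactly the same second order Artin pattern \((\lbrack 1^2;(2^2,1^2,1^2,1^2)\rbrack,\lbrack 1;(0000)\rbrack)\) --- indeed a group with precisely these invariants reappears in \S\ \ref{sss:TotallyRealDihedral} as a \(3\)-tower group of totally real dihedral fields. It survives your \(\tau\)-then-\(\varkappa\) sieve and can only be eliminated by the Shafarevich relation-rank bound \(d_2\le 3\) for real quadratic fields (its relation rank is \(4\), like that of \(\langle 27,3\rangle\) in Theorem \ref{thm:NewRealization}); so the Shafarevich input is needed already in the identification step, not merely to certify admissibility afterwards. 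Cases (1) and (2) are safe, because \(\varkappa\ne(0000)\) already excludes both the mainline vertices and the competitors with non-abelian two-step centralizer.

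Second, your termination argument --- ``a finite computation with the \(p\)-group generation algorithm carried out in \cite{BBH2}'' --- is not a proof: \cite{BBH2} contains no such cover computation, and without a theoretical bound the set of pro-\(3\) groups \(G\) with \(G/G^{\prime\prime}\simeq\mathfrak{M}\), \(d_1{G}=2\), \(d_2{G}\le 3\) and a GI-action is not a priori finite, so ``verifying that no proper cover exists'' cannot simply be delegated to a machine. What actually justifies the word \emph{metabelian} in the caption of Table \ref{tbl:RealTwoStage8} is the known fact that the Shafarevich cover of these coclass-one groups is trivial, i.e.\ a \(\sigma\)-group satisfying the real-quadratic relation-rank bound whose metabelianization has maximal class is itself metabelian; this is established in \cite{Ma1,Ma7,Ma10}, not in \cite{BBH2}. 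Your instinct that the termination is the delicate point is sound, but the resolution you propose is neither finite as stated nor correctly sourced.
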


%\newpage

%--------------------------------------------------------------------------------

\subsection{Non-metabelian three-stage towers with \(\ell_p{F}=3\)}
\label{ss:ThreeStage}
\noindent
According to the Successive Approximation Theorem,
the third stage \(F_p^{(3)}\) of the \(p\)-class tower of a number field \(F\)
is usually determined by the third order Artin pattern
\begin{equation}
\label{eqn:AP3}
\mathrm{AP}^{(3)}{F}=(\tau^{(3)}{F},\varkappa^{(3)}{F})=
(\lbrack\tau^{(1)}{F};(\tau^{(2)}{E})_{E\in\mathrm{Lyr}_1{F}}\rbrack,\lbrack\varkappa^{(1)}{F};(\varkappa^{(2)}{E})_{E\in\mathrm{Lyr}_1{F}}\rbrack).
\end{equation}
It is interesting, however, that there are extensive collections of quadratic fields \(F\)
with \(3\)-class towers of exact length \(\ell_3{F}=3\),
which can be characterized by the second order Artin pattern already.
We begin with imaginary quadratic fields \(F=\mathbb{Q}(\sqrt{d})\) with discriminants \(d<0\).

\begin{theorem}
\label{thm:ImaginaryThreeStage}
Among the finite \(3\)-groups \(G\)
with elementary bicyclic abelianization \(G/G^\prime\simeq C_3\times C_3\) of rank two,
there exist infinitely many non-metabelian groups with GI-action and relation rank \(d_2{G}=2\)
(so-called Schur \(\sigma\)-groups
\cite{KoVe,BBH}),
but only seven of minimal order \(3^8\),
namely \(\langle 6561,i\rangle\) with \(i\in\lbrace 606,616,617,618,620,622,624\rbrace\).
\begin{enumerate}
\item
These are the groups of smallest order
which are admissible as non-metabelian \(3\)-class tower groups \(G\simeq\mathrm{G}_3^\infty{F}\)
of imaginary quadratic fields \(F\) with \(3\)-class group \(\mathrm{Cl}_3{F}\simeq C_3\times C_3\).
\item
Exceptionally, for an imaginary quadratic field \(F\),
the trailing six of these groups are determined by the second order Artin pattern already.
\begin{enumerate}
\item
If \(\mathrm{AP}^{(2)}{F}=(\lbrack 1^2;(32,21,1^3,21)\rbrack,\lbrack 1;(1313)\rbrack)\)
then \(\mathrm{G}_3^\infty{F}\simeq\langle 6561,616\rangle\).
\item
If \(\mathrm{AP}^{(2)}{F}=(\lbrack 1^2;(32,21,1^3,21)\rbrack,\lbrack 1;(2313)\rbrack)\)
then \(\mathrm{G}_3^\infty{F}\simeq\langle 6561,i\rangle\) with \(i\in\lbrace 617,618\rbrace\).
\item
If \(\mathrm{AP}^{(2)}{F}=(\lbrack 1^2;(32,21,21,21)\rbrack,\lbrack 1;(1231)\rbrack)\)
then \(\mathrm{G}_3^\infty{F}\simeq\langle 6561,622\rangle\).
\item
If \(\mathrm{AP}^{(2)}{F}=(\lbrack 1^2;(32,21,21,21)\rbrack,\lbrack 1;(2231)\rbrack)\)
then \(\mathrm{G}_3^\infty{F}\simeq\langle 6561,i\rangle\) with \(i\in\lbrace 620,624\rbrace\).
\end{enumerate}
\item
The actual distribution of these \(3\)-class tower groups \(G\)
among the \(24\,476\) imaginary quadratic fields \(F=\mathbb{Q}(\sqrt{d})\)
with \(3\)-class group \(\mathrm{Cl}_3{F}\simeq C_3\times C_3\) and
discriminants \(-10^7<d<0\) is presented in
Table
\ref{tbl:ImaginaryThreeStage}.
\end{enumerate}
\end{theorem}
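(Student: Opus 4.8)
The plan is to combine the $p$-group generation algorithm for the descendant tree $\mathcal{T}(\langle 9,2\rangle)$ with abelian root $\langle 9,2\rangle\simeq C_3\times C_3$ with the arithmetic constraint of the Shafarevich theorem, exactly in the spirit of the proofs of Theorems \ref{thm:ImaginaryTwoStage} and \ref{thm:RealTwoStage}. For part (1) I would first invoke the Shafarevich theorem \cite[Thm. 5.1, p. 28]{Ma10}: an imaginary quadratic field $F$ has signature $(r_1,r_2)=(0,1)$ and torsionfree Dirichlet unit rank $r=0$, so a $3$-tower group $G\simeq\mathrm{G}_3^\infty{F}$ with $G/G^\prime\simeq\mathrm{Cl}_3{F}\simeq C_3\times C_3$ satisfies $d_1{G}=2$ and $2=d_1{G}\le d_2{G}\le d_1{G}+r=2$; hence $G$ is a Schur $\sigma$-group. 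It therefore suffices to search $\mathcal{T}(\langle 9,2\rangle)$ for its non-metabelian vertices that have a GI-action and relation rank $d_2{G}=2$. The existence of infinitely many such vertices follows from the known periodicity of the coclass trees contained in $\mathcal{T}(\langle 9,2\rangle)$, since a periodic branch supplies an infinite sequence of Schur $\sigma$-groups of unbounded order. That the minimal order at which such a vertex occurs is $3^8$, with exactly the seven vertices $\langle 6561,i\rangle$, $i\in\{606,616,617,618,620,622,624\}$, appearing there, is then a finite MAGMA computation: one enumerates the vertices of $\mathcal{T}(\langle 9,2\rangle)$ of order up to $3^7$, verifies that none of the non-metabelian ones is a Schur $\sigma$-group, and inspects the vertices of order $3^8$.

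For part (2) I would compute, for each of the six groups $\langle 6561,i\rangle$ with $i\in\{616,617,618,620,622,624\}$, the second-order Artin pattern $\mathrm{AP}^{(2)}=(\tau^{(2)},\varkappa^{(2)})$ via the stepwise Artin transfers of Definition \ref{dfn:Collection}; by Theorem \ref{thm:SuccessiveApproximation} this pattern depends only on the metabelianization $\mathfrak{M}=G/G^{\prime\prime}$, a group of order $3^6$ or $3^7$. The substance of the ``exceptional'' claim is that, although $\mathrm{AP}^{(2)}$ is a priori merely a metabelian invariant, among \emph{all} Schur $\sigma$-groups the four patterns in the statement are realized only by the indicated single group, respectively the indicated pair. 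To establish this I would fix, for each pattern, the finitely many metabelian parents $\mathfrak{M}$ carrying it, and trace the descendant subtree of Schur $\sigma$-groups lying above $\mathfrak{M}$: pruning by the relation-rank bound $d_2{G}=2$ together with the periodicity of $\mathcal{T}(\langle 9,2\rangle)$ makes this subtree finite, and a MAGMA enumeration shows it consists precisely of $\langle 6561,616\rangle$; of $\langle 6561,617\rangle$ and $\langle 6561,618\rangle$; of $\langle 6561,622\rangle$; and of $\langle 6561,620\rangle$ and $\langle 6561,624\rangle$, respectively. The two unavoidable two-element ambiguities are exactly the cases in which $\mathrm{AP}^{(2)}$ cannot separate the two siblings and one has to pass to $\mathrm{AP}^{(3)}$ (cf. Remark \ref{rmk:Collection}).

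The main obstacle is the finiteness assertion inside part (2): proving that the Shafarevich constraint $d_2{G}=2$ leaves only finitely many --- and in fact very few --- non-metabelian descendants above the metabelian parents that carry these particular Artin patterns. This needs the periodicity and the associated pruning results for the coclass trees inside $\mathcal{T}(\langle 9,2\rangle)$; once finiteness is secured, the remaining identification is a routine, if extensive, computation in MAGMA \cite{MAGMA}. Finally, part (3) is purely numerical: it is obtained by reading off from the data of \cite{Ma17,Ma17b} which of these second-order Artin patterns occur among the $24\,476$ imaginary quadratic fields with $-10^7<d<0$ and $\mathrm{Cl}_3{F}\simeq C_3\times C_3$, and collecting the frequencies in Table \ref{tbl:ImaginaryThreeStage}.
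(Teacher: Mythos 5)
Your proposal is correct and follows essentially the same route as the paper: the paper's own proof is a one-line reference to ``a similar but more extensive search'' in the descendant tree \(\mathcal{T}(\langle 9,2\rangle)\) as in the proof of Theorem \ref{thm:ImaginaryTwoStage} (Shafarevich bound \(d_2{G}=2\) for imaginary quadratic fields, GI-action filter, MAGMA enumeration), plus the statement that the table data were computed with MAGMA. Your write-up merely makes explicit the ingredients the paper leaves implicit (periodicity of the coclass trees for the infinitude and the finiteness of the pruned subtrees in part (2)), which is a faithful elaboration rather than a different approach.
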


%--------------------------------------------------------------------------------

\renewcommand{\arraystretch}{1.2}

\begin{table}[ht]
\caption{Frequencies of non-metabelian \(3\)-class tower groups \(G\) for \(-10^7<d<0\)}
\label{tbl:ImaginaryThreeStage}
\begin{center}
\begin{tabular}{|l|r||r|c||l|c||r|}
\hline
 \(G\simeq\)                          & abs. fr. & rel. fr.   & w. r. t.    & type              & \(\varkappa\) & \(\lvert d\rvert_{\text{min}}\) \\
\hline
 \(\langle 6561,616\rangle\)          &  \(760\) & \(3.11\%\) & \(24\,476\) & \(\mathrm{E}.6\)  & \((1313)\)    & \(15\,544\) \\
\hline
 \(\langle 6561,617\rangle\) or       & \(1572\) & \(6.42\%\) & \(24\,476\) & \(\mathrm{E}.14\) & \((2313)\)    & \(16\,627\) \\
 \(\langle 6561,618\rangle\)          &          &            &             &                   &               &             \\
\hline
 \(\langle 6561,622\rangle\)          &  \(798\) & \(3.26\%\) & \(24\,476\) & \(\mathrm{E}.8\)  & \((1231)\)    & \(34\,867\) \\
\hline
 \(\langle 6561,620\rangle\) or       & \(1583\) & \(6.47\%\) & \(24\,476\) & \(\mathrm{E}.9\)  & \((2231)\)    &  \(9\,748\) \\
 \(\langle 6561,624\rangle\)          &          &            &             &                   &               &             \\
\hline
\end{tabular}
\end{center}
\end{table}

%--------------------------------------------------------------------------------

\begin{proof}
By a similar but more extensive search than in the proof of Theorem
\ref{thm:ImaginaryTwoStage}.
Data for Table
\ref{tbl:ImaginaryThreeStage}
has been computed by ourselves in June \(2016\)
using MAGMA
\cite{MAGMA}.
\end{proof}

%--------------------------------------------------------------------------------

\begin{remark}
\label{rmk:ImaginaryThreeStage}
It should be pointed out that items (1) and (2) of Theorem
\ref{thm:ImaginaryThreeStage}
are \textit{not valid for real quadratic fields},
as documented in
\cite[Thm. 7.8, p. 162, and Thm. 7.12, p. 165]{Ma11b}.

The group \(\langle 6561,606\rangle\)
belongs to the infinite Shafarevich cover of the metabelian group \(\langle 729,45\rangle\)
with respect to imaginary quadratic fields
\cite[Cor. 6.2, p. 301]{Ma7},
\cite{Ma7b}.
It shares a common second order Artin pattern
with all other elements of the Shafarevich cover.
Third order Artin patterns must be used for its identification, as shown in
\cite[Thm. 7.14, p. 168]{Ma11b}.
\end{remark}

%--------------------------------------------------------------------------------

\noindent
Now we turn to real quadratic fields \(F=\mathbb{Q}(\sqrt{d})\) with discriminants \(d>0\).

\begin{theorem}
\label{thm:RealThreeStage}
Among the finite \(3\)-groups \(G\)
with elementary bicyclic abelianization \(G/G^\prime\simeq C_3\times C_3\) of rank two,
there exist infinitely many non-metabelian groups with GI-action and relation rank \(d_2{G}=3\)
(so-called Schur\(+1\) \(\sigma\)-groups
\cite{BBH2}),
but only nine of minimal order \(3^7\),
namely \(\langle 2187,i\rangle\) with \(i\in\lbrace 270,271,272,273,284,291,307,308,311\rbrace\).
\begin{enumerate}
\item
These are the groups of smallest order
which are admissible as non-metabelian \(3\)-class tower groups \(G\simeq\mathrm{G}_3^\infty{F}\)
of real quadratic fields \(F\) with \(3\)-class group \(\mathrm{Cl}_3{F}\simeq C_3\times C_3\).
\item
Exceptionally, for a real quadratic field \(F\),
four of these groups are determined by the second order Artin pattern already.
\begin{enumerate}
\item
If \(\mathrm{AP}^{(2)}{F}=(\lbrack 1^2;(2^2,21,1^3,21)\rbrack,\lbrack 1;(0313)\rbrack)\)
then \(\mathrm{G}_3^\infty{F}\simeq\langle 2187,i\rangle\) with \(i\in\lbrace 284,291\rbrace\).
\item
If \(\mathrm{AP}^{(2)}{F}=(\lbrack 1^2;(2^2,21,21,21)\rbrack,\lbrack 1;(0231)\rbrack)\)
then \(\mathrm{G}_3^\infty{F}\simeq\langle 2187,i\rangle\) with \(i\in\lbrace 307,308\rbrace\).
\end{enumerate}
\item
The actual distribution of these \(3\)-class tower groups \(G\)
among the \(415\,698\) real quadratic fields \(F=\mathbb{Q}(\sqrt{d})\)
with \(3\)-class group \(\mathrm{Cl}_3{F}\simeq C_3\times C_3\) and
discriminants \(1<d<10^9\) is presented in
Table
\ref{tbl:RealThreeStage}.
\end{enumerate}
\end{theorem}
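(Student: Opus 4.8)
The plan is to follow the pattern of the proof of Theorem~\ref{thm:ImaginaryThreeStage}, adjusting the Shafarevich bound to the real case. For item~(1), I would carry out a descendant-tree search in $\mathcal{T}(R)$ with abelian root $R = \langle 9, 2\rangle \simeq C_3 \times C_3$, using the $p$-group generation algorithm in MAGMA \cite{MAGMA}: descending through finite $3$-groups $G$ with $G/G^\prime \simeq C_3 \times C_3$ and retaining those that are non-metabelian, admit a GI-action, and satisfy $d_1{G} = 2 \le d_2{G} \le d_1{G} + r = 3$, where $r = r_1 + r_2 - 1 = 1$ is the torsionfree Dirichlet unit rank of a real quadratic field. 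Since the classification in Theorem~\ref{thm:ImaginaryThreeStage} shows that every non-metabelian Schur $\sigma$-group ($d_2 = 2$) with this abelianization and a GI-action has order at least $3^8$, the minimal non-metabelian admissible groups over real quadratic fields must satisfy $d_2{G} = 3$, i.e. be Schur$+1$ $\sigma$-groups; the search then terminates at order $3^7 = 2187$ with exactly the nine identifiers $\langle 2187, i\rangle$, $i \in \{270,271,272,273,284,291,307,308,311\}$, and with no hit of smaller order.

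For item~(2), I would compute $\mathrm{AP}^{(2)}{G} = (\tau^{(2)}{G}, \varkappa^{(2)}{G})$ for each of these nine groups and compare it with the second-order Artin patterns of all further $3$-groups in the relevant branches of $\mathcal{T}(R)$ sharing the first-order component $\tau^{(1)} = \lbrack 1^2\rbrack$ --- in particular the (possibly infinite) Shafarevich covers of the pertinent metabelian mainline vertices, in the sense of Remark~\ref{rmk:ImaginaryThreeStage}. The claim to verify is that the two patterns $(\lbrack 1^2;(2^2,21,1^3,21)\rbrack, \lbrack 1;(0313)\rbrack)$ and $(\lbrack 1^2;(2^2,21,21,21)\rbrack, \lbrack 1;(0231)\rbrack)$ single out, among all admissible groups, precisely the pairs $\{\langle 2187,284\rangle, \langle 2187,291\rangle\}$ and $\{\langle 2187,307\rangle, \langle 2187,308\rangle\}$, whereas for the remaining five identifiers the second-order Artin pattern coincides with that of other descendants of the same metabelian vertex, so that only the third-order pattern $\mathrm{AP}^{(3)}$ can distinguish them --- exactly the phenomenon recorded for $\langle 6561,606\rangle$ in Remark~\ref{rmk:ImaginaryThreeStage}. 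The two pairs themselves stay genuinely $\mathrm{AP}^{(2)}$-indistinguishable, which is why the conclusion is stated only up to $i \in \{284,291\}$, resp. $i \in \{307,308\}$.

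Item~(3) is then an arithmetic rather than a group-theoretic computation: for each of the $415\,698$ real quadratic fields $F = \mathbb{Q}(\sqrt d)$ with $\mathrm{Cl}_3{F} \simeq C_3 \times C_3$ and $0 < d < 10^9$, determine the four $3$-class groups $\mathrm{Cl}_3{E_i}$ of the unramified cyclic cubic extensions $E_i/F$ and the four transfer kernels $\ker(T_{F,E_i})$, assemble $\mathrm{AP}^{(2)}{F}$, and match it against the finitely many admissible patterns from step~(2); the fields whose pattern is one of the two distinguished ones contribute the frequencies tabulated in Table~\ref{tbl:RealThreeStage}. I expect the principal difficulty to lie on the group-theoretic side of steps~(1)--(2): proving --- not merely observing numerically --- that no non-metabelian admissible $3$-group of order below $3^7$ exists, and that precisely four of the nine groups are recognizable from $\mathrm{AP}^{(2)}$, which requires controlling the (in general infinite) Shafarevich covers far enough to be certain that no further admissible group shares either of the two distinguished second-order patterns.
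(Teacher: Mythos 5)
Your proposal is correct and follows essentially the route the paper takes: the paper's own proof simply cites the propositions, theorems and remarks of \cite{Ma10} for the types \(\mathrm{c}.18\), \(\varkappa\sim(0313)\), and \(\mathrm{c}.21\), \(\varkappa\sim(0231)\), and those references carry out exactly the descendant-tree search with the Shafarevich bound \(d_2\le d_1+r=3\), the comparison of second-order Artin patterns against the Shafarevich covers of \(\langle 729,45\rangle\) and \(\langle 729,57\rangle\), and the arithmetic tabulation that you describe. Your identification of the five identifiers \(i\in\lbrace 270,271,272,273,311\rbrace\) as the ones requiring third-order patterns matches Remark~\ref{rmk:RealThreeStage} precisely.
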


%--------------------------------------------------------------------------------

\renewcommand{\arraystretch}{1.2}

\begin{table}[ht]
\caption{Frequencies of non-metabelian \(3\)-class tower groups \(G\) for \(0<d<10^9\)}
\label{tbl:RealThreeStage}
\begin{center}
\begin{tabular}{|l|r||r|c||l|c||r|}
\hline
 \(G\simeq\)                          & abs. fr. & rel. fr.   & w. r. t.     & type              & \(\varkappa\) & \(d_{\text{min}}\) \\
\hline
 \(\langle 2187,284\rangle\) or       & \(4318\) & \(1.04\%\) & \(415\,698\) & \(\mathrm{c}.18\) & \((0313)\)    &  \(534\,824\) \\
 \(\langle 2187,291\rangle\)          &          &            &              &                   &               &               \\
\hline
 \(\langle 2187,307\rangle\) or       & \(4377\) & \(1.05\%\) & \(415\,698\) & \(\mathrm{c}.21\) & \((0231)\)    &  \(540\,365\) \\
 \(\langle 2187,308\rangle\)          &          &            &              &                   &               &               \\
\hline
\end{tabular}
\end{center}
\end{table}

%--------------------------------------------------------------------------------

\begin{proof}
The claims for transfer kernel type \(\mathrm{c}.18\), \(\varkappa(F)\sim (0313)\),
are a consequence of
\cite[Prp. 7.1, p. 32, Thm. 7.1, p. 33, and Rmk. 7.1, p. 35]{Ma10},
those for type \(\mathrm{c}.21\), \(\varkappa(F)\sim (0231)\), have been proved in
\cite[Prp. 8.1, p. 42, Thm. 8.1, p. 44, and Rmk. 8.2, p. 45]{Ma10}.
A slightly stronger result is the Main Theorem
\cite[Thm. 2.1, p. 22]{Ma10}.
\end{proof}

\begin{remark}
\label{rmk:RealThreeStage}
The groups \(\langle 2187,i\rangle\) with \(i\in\lbrace 270,271,272,273\rbrace\)
are elements of the infinite Shafarevich cover of the metabelian group \(\langle 729,45\rangle\)
with respect to real quadratic fields.

The group \(\langle 2187,311\rangle\)
belongs to the infinite Shafarevich cover of the metabelian group \(\langle 729,57\rangle\)
with respect to real quadratic fields.

These five groups share a common second order Artin pattern
with all other elements of the relevant Shafarevich cover.
Third order Artin patterns must be employed for their identification, as shown in
\cite[Thm. 7.13, p. 167, and Thm. 7.15, p. 169]{Ma11b}.
\end{remark}

%--------------------------------------------------------------------------------

\section{Maximal subgroups of \(3\)-groups of coclass one}
\label{s:MaximalSubgroups}
\noindent
Let \((\gamma_i(G))_{i\ge 1}\) be the descending lower central series of the group \(G\),
defined recursively by \(\gamma_1(G):=G\) and
\(\gamma_i(G):=\lbrack\gamma_{i-1}(G),G\rbrack\) for \(i\ge 2\),
in particular, \(\gamma_2(G)=G^\prime\) is the commutator subgroup of \(G\).
A finite \(p\)-group \(G\) is nilpotent with
\(\gamma_1(G)>\gamma_2(G)>\ldots>\gamma_c(G)>\gamma_{c+1}(G)=1\)
for some integer \(c\ge 1\),
which is called the \textit{nilpotency class} \(\mathrm{cl}(G)=c\) of \(G\).
When \(G\) is of order \(p^n\), for some integer \(n\ge 1\),
the \textit{coclass} of \(G\) is defined by \(\mathrm{cc}(G):=n-c\)
and \(\mathrm{lo}(G):=n\) is called the \textit{logarithmic order} of \(G\).

Finite \(3\)-groups \(G\) with coclass \(\mathrm{cc}(G)=1\) were investigated by N. Blackburn
\cite{Bl2}
in \(1958\).
All of these CF-groups,
which exclusively have \textit{cyclic factors} \(\gamma_i(G)/\gamma_{i+1}(G)\)
of their descending central series for \(i\ge 2\),
are necessarily metabelian with second derived subgroup \(G^{\prime\prime}=1\)
and abelian commutator subgroup \(G^\prime\)
and possess abelianization \(G/G^\prime\simeq C_3\times C_3\),
according to Blackburn
\cite{Bl1}.

For the statement of Theorem
\ref{thm:MaxSbgCc1},
we need a precise ordering of the four maximal subgroups \(H_1,\ldots,H_4\) of the group \(G=\langle x,y\rangle\),
which can be generated by two elements \(x,y\),
according to the Burnside basis theorem.
For this purpose, we select the generators \(x,y\) such that
\begin{equation}
\label{eqn:MaximalSubgroups}
H_1=\langle y,G^\prime\rangle,\quad
H_2=\langle x,G^\prime\rangle,\quad
H_3=\langle xy,G^\prime\rangle,\quad
H_4=\langle xy^2,G^\prime\rangle,
\end{equation}
and \(H_1=\chi_2(G)\),
provided that \(G\) is of nilpotency class \(\mathrm{cl}(G)\ge 3\).
Here we denote by
\begin{equation}
\label{eqn:TwoStepCentralizer}
\chi_2(G):=\lbrace g\in G\mid (\forall\ h\in\gamma_2(G))\ \lbrack g,h\rbrack\in\gamma_4(G)\rbrace
\end{equation}
the \textit{two-step centralizer} of \(G^\prime\) in \(G\).

%--------------------------------------------------------------------------------

\subsection{Parametrized presentations of metabelian \(3\)-groups}
\label{ss:Presentations}
\noindent
The identification of the groups will be achieved with the aid of
parametrized polycyclic power-commutator presentations, as given by
Blackburn
\cite{Bl2},
Miech
\cite{Mi},
and Nebelung
\cite{Ne}:
\begin{equation}
\label{eqn:Presentation}
\begin{aligned}
G_a^n(z,w) := \langle x,y,s_2,\ldots,s_{n-1}\mid s_2=\lbrack y,x\rbrack,\ (\forall_{i=3}^n)\ s_i=\lbrack s_{i-1},x\rbrack,\ s_n=1,\ \lbrack y,s_2\rbrack=s_{n-1}^a, \\
(\forall_{i=3}^{n-1})\ \lbrack y,s_i\rbrack=1,\ x^3=s_{n-1}^w,\ y^3s_2^3s_3=s_{n-1}^z,\ (\forall_{i=2}^{n-3})\ s_i^3s_{i+1}^3s_{i+2}=1,\ s_{n-2}^3=s_{n-1}^3=1\ \rangle,
\end{aligned}
\end{equation}
where \(a\in\lbrace 0,1\rbrace\) and \(w,z\in\lbrace -1,0,1\rbrace\) are bounded parameters,
and the \textit{index of nilpotency} \(n=\mathrm{cl}(G)+1=\mathrm{cl}(G)+\mathrm{cc}(G)=\log_3(\mathrm{ord}(G))=:\mathrm{lo}(G)\) is an unbounded parameter.

%--------------------------------------------------------------------------------

%\noindent
The following lemma generalizes relations for second and third powers of generators in
\cite[Lem. 3.1]{Ma17},
\cite{Ma17b}.

\begin{lemma}
\label{lem:PowerRelations}
Let \(G=\langle x,y\rangle\) be a finite \(3\)-group with two generators \(x,y\in G\).
Denote by \(s_2:=\lbrack y,x\rbrack\) the main commutator, and by
\(s_3:=\lbrack s_2,x\rbrack\) and \(t_3:=\lbrack s_2,y\rbrack\) the two iterated commutators.
Then the second and third power of the element \(xy\), respectively \(xy^2\), are given by
\begin{equation}
\label{eqn:PowerRelations}
\begin{aligned}
(xy)^2   &= x^2y^2s_2t_3     & \text{ and } (xy)^3   &= x^3y^3s_2^3s_3t_3^2, \text{ respectively} \\
(xy^2)^2 &= x^2y^4s_2^2t_3^2 & \text{ and } (xy^2)^3 &= x^3y^6s_2^6s_3^2t_3^2,
\end{aligned}
\end{equation}
provided that \(t_3\in\zeta(G)\) is central, \(t_3^3=1\), and \(\lbrack s_3,y\rbrack=1\).
\end{lemma}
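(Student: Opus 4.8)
The plan is a direct computation in the commutator calculus of $G$. Each of the four products is to be reduced to the collected normal form $x^{a}y^{b}s_2^{c}s_3^{d}t_3^{e}$ by first moving every $x$ to the far left, then every $y$, and finally gathering the commutator terms, all exponents of $t_3$ being read modulo $3$. The tools are the elementary straightening rules that follow at once from the definitions of $s_2,s_3,t_3$ and from the hypotheses, namely
\[
yx = xy\,s_2,\qquad s_2x = x\,s_2s_3,\qquad s_2y = y\,s_2t_3,\qquad s_3y = y\,s_3,
\]
together with the centrality of $t_3$ (so that $t_3$ commutes past every factor) and $t_3^3=1$. I would also use that $s_2$ and $s_3$ commute, which holds because $s_2,s_3\in G^\prime$ and in the setting relevant here --- metabelian $3$-groups, in particular all $3$-groups of coclass one --- the commutator subgroup $G^\prime$ is abelian. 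It is worth noting that $[s_3,x]$, which would introduce a spurious fourth-level term, never appears: in the collected form every $s_3$ ends up to the right of every $x$, so $s_3$ is never conjugated by $x$.

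From the straightening rules I would first derive, by an easy double induction, the two power-commutation formulas
\[
s_2^{\,a}y^{\,b} = y^{\,b}\,s_2^{\,a}\,t_3^{\,ab},\qquad
y^{\,b}x = x\,y^{\,b}\,s_2^{\,b}\,t_3^{\,\binom{b}{2}},
\]
with $t_3$-exponents reduced modulo $3$. After that the four claimed identities become bookkeeping. For $(xy)^2$ one pass suffices: $(xy)^2 = x\,(yx)\,y = x^2y\,(s_2y) = x^2y^2s_2t_3$. For $(xy)^3=(xy)^2\cdot(xy)$, moving the $x$ of the trailing factor to the front via $s_2x=xs_2s_3$ and $y^2x=xy^2s_2^2t_3$ produces $x^3y^2s_2^3s_3t_3^2$ (one $t_3$ inherited from $(xy)^2$, one from $y^2x$), and then the remaining $y$ slides leftward via $s_3y=ys_3$ and $s_2^3y=ys_2^3$ (the latter relying on $t_3^3=1$), giving $x^3y^3s_2^3s_3t_3^2$.

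The computations of $(xy^2)^2$ and $(xy^2)^3$ follow the same scheme with $b=2$ and $b=4$ in the power-commutation formulas. The intermediate $t_3$-exponents that occur (such as $t_3^4,t_3^5,t_3^6,t_3^8$) collapse modulo $3$ to $t_3,t_3^2,1,t_3^2$; this is exactly what produces the clean value $x^2y^4s_2^2t_3^2$ for $(xy^2)^2$, and in the cube the commutativity of $s_2$ and $s_3$ is invoked precisely once, to rewrite $s_2^2x = xs_2^2s_3^2$. That single step is what converts the accumulated $s_2^4\cdot s_2^2$ into $s_2^6$ and the two stray factors of $s_3$ into $s_3^2$, so that $(xy^2)^3 = x^3y^6s_2^6s_3^2t_3^2$.

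The only genuine difficulty I anticipate is keeping the bookkeeping honest. Every transposition of two adjacent generators spawns either a $t_3$ (from $s_2y=ys_2t_3$) or an $s_3$ (from $s_2x=xs_2s_3$), so one must carefully tally, over the roughly dozen elementary moves in each cube, how many of each are produced, and reduce the total $t_3$-exponent modulo $3$ only at the end. The precise role of the three hypotheses --- $t_3$ central, $t_3^3=1$, $[s_3,y]=1$ --- together with the ambient relation $[s_2,s_3]=1$, is to ensure that all of these ``extra'' factors can be shuffled to the tail of the word and then collapse to the powers asserted in \eqref{eqn:PowerRelations}, with no higher commutator surviving.
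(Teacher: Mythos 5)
Your proposal is correct and follows essentially the same route as the paper: both proofs collect the words into the normal form \(x^a y^b s_2^c s_3^d t_3^e\) by iterated substitution of \(yx=xys_2\), \(s_2x=xs_2s_3\), \(s_2y=ys_2t_3\), together with the centrality of \(t_3\), \(t_3^3=1\) and \(\lbrack s_3,y\rbrack=1\); your power-commutation formulas \(s_2^a y^b=y^b s_2^a t_3^{ab}\) and \(y^b x=x y^b s_2^b t_3^{\binom{b}{2}}\) merely package several of the paper's elementary moves at once. One point you raise deserves emphasis: the computation of \((xy^2)^3\) really does use \(s_2s_3=s_3s_2\) exactly once (to turn \(s_2^2x=xs_2s_3s_2s_3\) into \(xs_2^2s_3^2\)), and this relation does not follow from the three hypotheses listed in the lemma --- the paper's own proof invokes it tacitly, whereas you state it explicitly. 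Your justification via metabelianness is valid in every application of the lemma (all groups to which it is applied have abelian commutator subgroup), but strictly speaking \(\lbrack s_2,s_3\rbrack=1\) should be added to the hypotheses of the lemma itself; without it only the formulas for \((xy)^2\), \((xy)^3\) and \((xy^2)^2\) hold as stated, while \((xy^2)^3\) would come out as \(x^3y^6s_2^5s_3s_2s_3t_3^2\).
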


\begin{proof}
We begin by preparing three commutator relations:
\begin{equation}
\label{eqn:CommutatorRelations}
yx=xy\lbrack y,x\rbrack=xys_2,\quad
s_2x=xs_2\lbrack s_2,x\rbrack=xs_2s_3,\quad
\text{ and } \quad
s_2y=ys_2\lbrack s_2,y\rbrack=ys_2t_3.
\end{equation}
Now we prove the power relations
by expanding the power expressions by iterated substitution of the commutator relations in Formula
\eqref{eqn:CommutatorRelations},
always observing that \(t_3\) belongs to the centre, \(t_3^3=1\), and \(s_3y=ys_3\) commute:

\begin{equation*}
(xy)^2=xyxy=xxys_2y=x^2yys_2t_3=x^2y^2s_2t_3, \text{ and thus}
\end{equation*}

\begin{equation*}
\begin{aligned}
(xy)^3 &= (xy)^2xy=x^2y^2s_2t_3xy=x^2y^2s_2xyt_3=x^2yyxs_2s_3yt_3=x^2yxys_2s_2ys_3t_3= \\
&= x^2xys_2ys_2ys_2t_3s_3t_3=x^3yys_2t_3ys_2t_3s_2s_3t_3^2=x^3y^2s_2ys_2s_2s_3t_3^4= \\
&= x^3y^2ys_2t_3s_2^2s_3t_3=x^3y^3s_2^3s_3t_3^2, \text{ respectively}
\end{aligned}
\end{equation*}

\begin{equation*}
\begin{aligned}
(xy^2)^2 &= xyyxyy=xyxys_2yy=xxys_2yys_2t_3y=x^2yys_2t_3ys_2yt_3=x^2y^2s_2yys_2t_3t_3^2= \\
&= x^2y^2ys_2t_3ys_2t_3^3=x^2y^3s_2ys_2t_3=x^2y^3ys_2t_3s_2t_3=x^2y^4s_2^2t_3^2, \text{ and thus}
\end{aligned}
\end{equation*}

\begin{equation*}
\begin{aligned}
(xy^2)^3 &= (xy^2)^2xy^2=x^2y^4s_2^2t_3^2xy^2=x^2y^4s_2s_2xyyt_3^2=x^2y^4s_2xs_2s_3yyt_3^2= \\
&= x^2yyy yx s_2s_3 s_2y ys_3t_3^2=x^2yy yx ys_2 s_2y s_2t_3ys_3^2t_3^2=x^2yyxys_2ys_2ys_2t_3s_2ys_3^2t_3^3= \\
&= x^2yxys_2yys_2t_3ys_2t_3s_2ys_2t_3s_3^2t_3^4=x^2xys_2yys_2t_3ys_2ys_2t_3^2ys_2t_3s_2s_3^2t_3^2= \\
&= x^3yys_2t_3ys_2yys_2t_3s_2t_3^3ys_2^2s_3^2t_3^3=x^3y^2s_2ys_2yys_2t_3^2s_2ys_2^2s_3^2= \\
&= x^3y^2ys_2t_3ys_2t_3ys_2t_3^2ys_2t_3s_2^2s_3^2=x^3y^3s_2ys_2t_3^2ys_2yt_3^3s_2^3s_3^2=x^3y^3ys_2t_3s_2t_3^2yys_2t_3s_2^3s_3^2= \\
&= x^3y^4s_2s_2yys_2t_3^4s_2^3s_3^2=x^3y^4s_2s_2yys_2^4s_3^2t_3=x^3y^4s_2ys_2t_3ys_2^4s_3^2t_3=x^3y^4s_2ys_2ys_2^4s_3^2t_3^2= \\
&= x^3y^4ys_2t_3ys_2t_3s_2^4s_3^2t_3^2=x^3y^5s_2ys_2t_3^2s_2^4s_3^2t_3^2=x^3y^5y s_2t_3s_2^5s_3^2t_3^4=x^3y^6s_2^6s_3^2t_3^2.
\end{aligned}
\end{equation*}
\end{proof}

%--------------------------------------------------------------------------------

\begin{theorem}
\label{thm:MaxSbgCc1}
\noindent
Let \(G=\langle x,y\rangle\simeq G_a^n(z,w)\) be a finite \(3\)-group of coclass \(\mathrm{cc}(G)=1\)
and order \(\lvert G\rvert=3^n\) with generators \(x,y\)
such that \(y\in\chi_2(G)\) is contained in the two-step centralizer of \(G\),
whereas \(x\in G\setminus\chi_2(G)\),
given by a polycyclic power commutator presentation with parameters
\(a\in\lbrace 0,1\rbrace\), \(w,z\in\lbrace -1,0,1\rbrace\), and index of nilpotency \(n\ge 4\).

Then three of the four maximal subgroups, \(H_i=\langle xy^{i-2},G^\prime\rangle<G\), \(2\le i\le 4\),
are non-abelian \(3\)-groups of coclass \(\mathrm{cc}(H_i)=1\), as listed in Table
\ref{tbl:MaxSbgCc1}
in dependence on the parameters \(n,a,z,w\).

The supplementary Table
\ref{tbl:MaxSbgExtraSpecial}
shows the abelian maximal subgroups of the
remaining two extra special \(3\)-group of coclass \(\mathrm{cc}(G)=1\)
and order \(\lvert G\rvert=3^3\).
\end{theorem}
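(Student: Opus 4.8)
The plan is to work inside the metabelian group $G=G_a^n(z,w)$, using that $G^\prime=\gamma_2(G)=\langle s_2,\ldots,s_{n-1}\rangle$ is abelian, that $\gamma_j(G)=\langle s_j,\ldots,s_{n-1}\rangle$ for every $j\ge 2$, and that $s_{n-1}$ is central of order $3$. The hypotheses $y\in\chi_2(G)$ and $x\in G\setminus\chi_2(G)$ pin down the standard generators and force the exceptional maximal subgroup to be $H_1=\langle y,G^\prime\rangle=\chi_2(G)$ (which is abelian when $a=0$ and non-abelian of coclass $n-3$ when $a=1$); it remains to analyse $H_i=\langle g_i,G^\prime\rangle$ with $g_i:=xy^{i-2}$ for $i\in\{2,3,4\}$.

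\textbf{Step 1 (structure of the $H_i$).} From the defining relations, $t_3:=[s_2,y]=[y,s_2]^{-1}=s_{n-1}^{-a}$ is central with $t_3^3=1$, and $[s_3,y]=1$, so Lemma~\ref{lem:PowerRelations} applies to $g=xy$ and to $g=xy^2$. Applying the commutator identities $[uv,w]=[u,w]^{v}[v,w]$ and $[u,vw]=[u,w][u,v]^{w}$ together with $[x,s_2]=s_3^{-1}$, $[x,s_j]=s_{j+1}^{-1}$ for $3\le j\le n-2$, $[y,s_2]=s_{n-1}^{a}$ and $[y,s_j]=1$ for $j\ge 3$, one obtains
\begin{equation*}
[g_i,s_2]=s_3^{-1}s_{n-1}^{(i-2)a},\qquad [g_i,s_j]=s_{j+1}^{-1}\ (3\le j\le n-2),\qquad [g_i,s_{n-1}]=1 .
\end{equation*}
Hence $H_i^\prime$, the normal closure in $H_i$ of $[g_i,G^\prime]$, equals $\langle s_3,\ldots,s_{n-1}\rangle=\gamma_3(G)$; and since $[s_j,g_i]=s_{j+1}$ for $j\ge 3$ carries no central correction (because $[s_j,y]=1$), an induction gives $\gamma_j(H_i)=\gamma_{j+1}(G)$ for all $j\ge 2$, so the lower-central factors of $H_i$ from the second on are cyclic of order $3$. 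Thus $\mathrm{cl}(H_i)=n-2$ and $\mathrm{cc}(H_i)=(n-1)-(n-2)=1$; since $n\ge 4$ we have $H_i^\prime\ne 1$, so $H_i$ is non-abelian, and because $g_i^3$ and $s_2^3$ both lie in $\gamma_3(G)=H_i^\prime$ we also get $H_i/H_i^\prime\simeq C_3\times C_3$. Being a $3$-group of coclass $1$, $H_i$ is therefore of the form $G_{a_i}^{n-1}(z_i,w_i)$ for suitable parameters, by the presentation~\eqref{eqn:Presentation}.

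\textbf{Step 2 (the parameters).} I would take as generators of $H_i$ the elements $\bar x:=g_i$ and $\bar y:=s_2$; for $n\ge 5$ these are standard ($\bar y\in\chi_2(H_i)$, since $[s_2,\gamma_2(H_i)]\subseteq[G^\prime,G^\prime]=1$, whereas $\bar x\notin\chi_2(H_i)$), and for $n=4$, $H_i$ is extra special of order $3^3$ and no such condition is required. Then $\bar s_2:=[\bar y,\bar x]=s_3s_{n-1}^{-(i-2)a}$ and, inductively, $\bar s_j:=[\bar s_{j-1},\bar x]=s_{j+1}$ for $j\ge 3$, so the remaining lower-central relations of $H_i$ follow from the corresponding ones of $G$. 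As $\bar y=s_2$ and $\bar s_2\in G^\prime$ commute, $[\bar y,\bar s_2]=1$, which forces $a_i=0$. As $\bar y^3\bar s_2^3\bar s_3=s_2^3s_3^3s_4$, which equals $1$ by the relation $s_2^3s_3^3s_4=1$ (trivially so when $n=4$), we obtain $z_i=0$. Finally, evaluating $\bar x^3=g_i^3$ by means of $x^3=s_{n-1}^{w}$, $y^3s_2^3s_3=s_{n-1}^{z}$, $t_3^2=s_{n-1}^{a}$ and Lemma~\ref{lem:PowerRelations} yields
\begin{equation*}
g_2^3=s_{n-1}^{w},\qquad g_3^3=s_{n-1}^{w+z+a},\qquad g_4^3=s_{n-1}^{w+2z+a},
\end{equation*}
so that $w_i\equiv w+(i-2)z+\varepsilon_i a\pmod 3$ with $\varepsilon_2=0$ and $\varepsilon_3=\varepsilon_4=1$. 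Reducing these exponents into $\{-1,0,1\}$ and applying the residual isomorphisms among the presentations $G_0^{n-1}(z^\prime,w^\prime)$ to select the normalized representative produces the entries of Table~\ref{tbl:MaxSbgCc1}.

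\textbf{Step 3 (the extra special supplement).} When the index of nilpotency is $n=3$, the group $G$ has order $3^3$ (it is one of the two extra special $3$-groups), and each of its four maximal subgroups has order $9$, hence is abelian; computing the cube of a coset representative modulo $G^\prime$ from~\eqref{eqn:Presentation} decides between $C_3\times C_3$ and $C_9$ and produces Table~\ref{tbl:MaxSbgExtraSpecial}.

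The main obstacle is Step~2: one must carry the commutator and power bookkeeping through carefully, keeping track of the central correction factors $s_{n-1}^{\pm a}$ occurring in $\bar s_2$ and in the expansions of $g_i^3$, and one must treat the degenerate low-class case $n=4$, where $\gamma_4(G)=1$ and the parameter $a$ is redundant --- it is absorbed into a change of generators, which is precisely why the hypothesis $y\in\chi_2(G)$ has to be imposed in order that the three non-abelian maximal subgroups be the ones indexed by $i=2,3,4$. All remaining steps are routine, if somewhat lengthy, applications of the commutator calculus prepared in Lemma~\ref{lem:PowerRelations}.
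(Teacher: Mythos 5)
Your proof is correct and takes essentially the same route as the paper's own argument: both transform the polycyclic relations of \(G_a^n(z,w)\) into relations for \(H_i=\langle xy^{i-2},G'\rangle\) in the form \(G_0^{n-1}(0,\omega)\), obtaining \(\alpha=\zeta=0\) from the metabelian structure and the relation \(s_2^3s_3^3s_4=1\), and reading off \(\omega\) from the cubes \((xy)^3=s_{n-1}^{w+z+a}\), \((xy^2)^3=s_{n-1}^{w+2z+a}\) via Lemma~\ref{lem:PowerRelations} together with the identification \(G_0^{n-1}(0,-1)\simeq G_0^{n-1}(0,1)\). Your Step~1, which makes the coclass claim explicit by showing \(\gamma_j(H_i)=\gamma_{j+1}(G)\), is a welcome clarification of what the paper leaves implicit, but it is not a different method.
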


%--------------------------------------------------------------------------------

\renewcommand{\arraystretch}{1.2}

\begin{table}[ht]
\caption{Non-abelian maximal subgroups \(H_i<G\) of \(3\)-groups \(G\) of coclass \(1\)}
\label{tbl:MaxSbgCc1}
\begin{center}
\begin{tabular}{|c|c|c|c|c||c|c|c|}
\hline
 \(G\simeq\)     & \(n\)     & \(a\) & \(z\)  & \(w\)  & \(H_2=\langle x,G^\prime\rangle\) & \(H_3=\langle xy,G^\prime\rangle\) & \(H_4=\langle xy^2,G^\prime\rangle\) \\
\hline
 \(G_0^n(0,0)\)  & \(\ge 4\) & \(0\) & \(0\)  & \(0\)  & \(\simeq G_0^{n-1}(0,0)\)         & \(\simeq G_0^{n-1}(0,0)\)          & \(\simeq G_0^{n-1}(0,0)\)            \\
 \(G_0^n(0,1)\)  & \(\ge 4\) & \(0\) & \(0\)  & \(1\)  & \(\simeq G_0^{n-1}(0,1)\)         & \(\simeq G_0^{n-1}(0,1)\)          & \(\simeq G_0^{n-1}(0,1)\)            \\
 \(G_0^n(1,0)\)  & \(\ge 4\) & \(0\) & \(1\)  & \(0\)  & \(\simeq G_0^{n-1}(0,0)\)         & \(\simeq G_0^{n-1}(0,1)\)          & \(\simeq G_0^{n-1}(0,1)\)            \\
 \(G_0^n(-1,0)\) & \(\ge 4\) & \(0\) & \(-1\) & \(0\)  & \(\simeq G_0^{n-1}(0,0)\)         & \(\simeq G_0^{n-1}(0,1)\)          & \(\simeq G_0^{n-1}(0,1)\)            \\
\hline
 \(G_1^n(0,-1)\) & \(\ge 5\) & \(1\) & \(0\)  & \(-1\) & \(\simeq G_0^{n-1}(0,1)\)         & \(\simeq G_0^{n-1}(0,0)\)          & \(\simeq G_0^{n-1}(0,0)\)            \\
 \(G_1^n(0,0)\)  & \(\ge 5\) & \(1\) & \(0\)  & \(0\)  & \(\simeq G_0^{n-1}(0,0)\)         & \(\simeq G_0^{n-1}(0,1)\)          & \(\simeq G_0^{n-1}(0,1)\)            \\
 \(G_1^n(0,1)\)  & \(\ge 5\) & \(1\) & \(0\)  & \(1\)  & \(\simeq G_0^{n-1}(0,1)\)         & \(\simeq G_0^{n-1}(0,1)\)          & \(\simeq G_0^{n-1}(0,1)\)            \\
\hline
\end{tabular}
\end{center}
\end{table}

%--------------------------------------------------------------------------------

\renewcommand{\arraystretch}{1.2}

\begin{table}[ht]
\caption{Abelian maximal subgroups \(H_i<G\) of extra special \(3\)-groups \(G\)}
\label{tbl:MaxSbgExtraSpecial}
\begin{center}
\begin{tabular}{|c|c|c|c|c||c|c|c|c|}
\hline
 \(G\simeq\)     & \(n\) & \(a\) & \(z\)  & \(w\)  & \(H_1=\langle y,G^\prime\rangle\) & \(H_2=\langle x,G^\prime\rangle\) & \(H_3=\langle xy,G^\prime\rangle\) & \(H_4=\langle xy^2,G^\prime\rangle\) \\
\hline
 \(G_0^3(0,0)\)  & \(3\) & \(0\) & \(0\)  & \(0\)  & \(\simeq C_3\times C_3\)          & \(\simeq C_3\times C_3\)          & \(\simeq C_3\times C_3\)            & \(\simeq C_3\times C_3\)            \\
 \(G_0^3(0,1)\)  & \(3\) & \(0\) & \(0\)  & \(1\)  & \(\simeq C_3\times C_3\)          & \(\simeq C_9\)                    & \(\simeq C_9\)                      & \(\simeq C_9\)                      \\
\hline
\end{tabular}
\end{center}
\end{table}

%\newpage

%--------------------------------------------------------------------------------

\begin{proof}
For an index of nilpotency \(n\ge 4\),
the first maximal subgroup \(H_1=\langle y,G^\prime\rangle\) of \(G\)
coincides with the two-step centralizer \(\chi_2(G)\) of \(G\),
which is a \textit{nearly homocyclic abelian} \(3\)-group \(A(3,n-1)\) of order \(3^{n-1}\), when \(a=0\).
For \(a=1\), we have \(H_1/H_1^\prime\simeq A(3,n-1)\).

We transform all relations of the group \(G\simeq G_a^n(z,w)\)
into relations of the remaining three maximal subgroups \(H\simeq G_\alpha^{n-1}(\zeta,\omega)\) of \(G\).

The \textit{polycyclic commutator relations}
\(s_2=\lbrack y,x\rbrack\), \(s_i=\lbrack s_{i-1},x\rbrack\) for \(3\le i\le n\),
and the \textit{nilpotency relation} \(s_n=1\) for the group \(G=\langle x,y\rangle\),
with lower central series \(\gamma_i{G}=\langle s_i,\gamma_{i+1}{G}\rangle\) for \(i\ge 2\),
can be used immediately for the subgroup \(H_2=\langle x,G^\prime\rangle=\langle x,s_2\rangle\)
with lower central series \(\gamma_i{H_2}=\langle t_i,\gamma_{i+1}{H_2}\rangle\),
where \(t_i:=s_{i+1}\) for \(i\ge 2\), and \(t_{n-1}=1\).

For the lower central series of \(H_3=\langle xy,G^\prime\rangle\) and \(H_4=\langle xy^2,G^\prime\rangle\),
we must employ the \textit{main commutator relation} \(\lbrack y,s_2\rbrack=s_{n-1}^a\),
and \(\lbrack y,s_i\rbrack=1\) for \(i\ge 3\).
According to the \textit{right product rule} for commutators, we have
\(\lbrack s_{i-1},xy\rbrack=\lbrack s_{i-1},y\rbrack\cdot\lbrack s_{i-1},x\rbrack^y=1\cdot s_i^y=s_i\lbrack s_i,y\rbrack=s_i\cdot 1=s_i\),
for \(i\ge 4\), but
\(\lbrack s_2,xy\rbrack=\lbrack s_2,y\rbrack\cdot\lbrack s_2,x\rbrack^y=s_{n-1}^{-a}s_3^y=s_{n-1}^{-a}s_3\lbrack s_3,y\rbrack=s_{n-1}^{-a}s_3\),
and in a similar fashion
\(\lbrack s_{i-1},xy^2\rbrack=\lbrack s_{i-1},y\rbrack\cdot\lbrack s_{i-1},xy\rbrack^y=1\cdot s_i^y=s_i\lbrack s_i,y\rbrack=s_i\cdot 1=s_i\),
for \(i\ge 4\), but again exceptionally
\(\lbrack s_2,xy^2\rbrack=\lbrack s_2,y\rbrack\cdot\lbrack s_2,xy\rbrack^y=s_{n-1}^{-a}y^{-1}s_{n-1}^{-a}s_3y=s_{n-1}^{-2a}s_3=s_{n-1}^as_3\).
For \(a=1\), the \textit{left product rule} for commutators shows
\(\lbrack s_{n-1}^{\mp 1}s_3,xy^{\pm 1}\rbrack=\lbrack s_{n-1}^{\mp 1},xy^{\pm 1}\rbrack^{s_3}\cdot\lbrack s_3,xy^{\pm 1}\rbrack=s_4\),
that is, the slight anomaly for the main commutator disappears in the next step.
Thus, the lower central series is \(\gamma_i{H_j}=\langle t_i,\gamma_{i+1}{H_j}\rangle\) for \(i\ge 2\), \(3\le j\le 4\),
where generally \(t_i:=s_{i+1}\) for \(i\ge 3\), and \(t_2:=s_3\) for \(a=0\), \(t_2:=s_{n-1}^{2-j}s_3\) for \(a=1\).
In particular, \(H_3=\langle xy,s_2\rangle\) and \(H_4=\langle xy^2,s_2\rangle\).

The main commutator relation for all three subgroups \(H_2,H_3,H_4\) of any group \(G\simeq G_a^n(z,w)\) with \(n\ge 4\) is
\(\lbrack s_2,t_2\rbrack=1=t_{n-2}^\alpha\), that is \(\alpha=0\), generally,
and it remains to determine \(\zeta,\omega\).

For this purpose, we come to the \textit{power relations} of \(G\),
\(x^3=s_{n-1}^w\), \(y^3s_2^3s_3=s_{n-1}^z\), and \(s_i^3s_{i+1}^3s_{i+2}=1\) for \(i\ge 2\),
supplemented by 
\((xy)^3=x^3y^3s_2^3s_3s_{n-1}^{-2a}=s_{n-1}^ws_{n-1}^zs_{n-1}^{-2a}\) and \((xy^2)^3=x^3(y^3s_2^3s_3)^2s_{n-1}^{-2a}=s_{n-1}^ws_{n-1}^{2z}s_{n-1}^{-2a}\),
and we use these relations to determine \(\zeta,\omega\) in dependence on \(w,z,a\).
Generally, we have \(s_2^3t_2^3t_3=s_2^3s_3^3s_4=1\) for \(a=0\),
\(s_2^3t_2^3t_3=s_2^3s_{n-1}^{3(2-j)}s_3^3s_4=s_2^3s_3^3s_4=1\) for \(a=1\),
and thus uniformly \(\zeta=0\).

For \(G_0^n(0,0)\), we uniformly have \(x^3=(xy)^3=(xy^2)^3=1\), and thus \(\omega=0\) for all three subgroups.
For \(G_0^n(0,1)\), we uniformly have \(x^3=(xy)^3=(xy^2)^3=s_{n-1}\), and thus \(\omega=1\) for all three subgroups.
For \(G_0^n(\pm 1,0)\), we have \(x^3=1\), but \((xy)^3=s_{n-1}^{\pm 1}\), \((xy^2)^3=s_{n-1}^{\pm 2}=s_{n-1}^{\mp 1}\),
and thus \(\omega=0\) for \(H_2\) but \(\omega=1\) for \(H_3,H_4\), since \(G_0^n(0,-1)\simeq G_0^n(0,1)\).

For \(G_1^n(0,-1)\), we have \(x^3=s_{n-1}^{-1}\), but \((xy)^3=(xy^2)^3=s_{n-1}^{-3}=1\),
and thus \(\omega=1\) for \(H_2\) but \(\omega=0\) for \(H_3,H_4\).
For \(G_1^n(0,0)\), we have \(x^3=1\), but \((xy)^3=(xy^2)^3=s_{n-1}^{-2}=s_{n-1}\),
and thus \(\omega=0\) for \(H_2\) but \(\omega=1\) for \(H_3,H_4\).
For \(G_1^n(0,1)\), we have \(x^3=s_{n-1}\), \((xy)^3=(xy^2)^3=s_{n-1}^{-1}\),
and thus \(\omega=1\) for all three subgroups, again observing that \(G_0^n(0,-1)\simeq G_0^n(0,1)\).

The only \(3\)-groups \(G\) of coclass \(\mathrm{cc}(G)=1\) and order \(\lvert G\rvert=3^3\)
are the two extra special groups \(G_0^3(0,0)\) and \(G_0^3(0,1)\).
Since \(t_2=s_3=1\), all their four maximal subgroups,
\(H_1=\langle y,s_2\rangle\), \(H_2=\langle x,s_2\rangle\), \(H_3=\langle xy,s_2\rangle\), \(H_4=\langle xy^2,s_2\rangle\), 
are abelian.
For \(w=z=0\), \(s_2\) is independent of the other generator, and \(H_i\simeq C_3\times C_3\) for \(1\le i\le 4\).
However, for \(w=1\), \(z=0\), we have \(x^3=(xy)^3=(xy^2)^3=s_2\), \(s_2^3=1\), and thus \(H_2\simeq H_3\simeq H_4\simeq C_9\),
whereas \(H_1\simeq C_3\times C_3\).
\end{proof}

%\newpage

%--------------------------------------------------------------------------------

\section{A general theorem for arbitrary base fields}
\label{s:General}
\noindent
Suppose that \(p\) is a prime,
\(F\) is an algebraic number field
with non-trivial \(p\)-class group \(\mathrm{Cl}_p{F}>1\),
and \(E\) is one of the unramified abelian \(p\)-extensions of \(F\).
We show that, even in this general situation,
a finite \(p\)-class tower of \(F\)
exerts a very severe restriction on the \(p\)-class tower of \(E\).

\begin{theorem}
\label{thm:General}
Assume that \(F\) possesses a \(p\)-class tower \(F_p^{(\infty)}=F_p^{(n)}\)
of exact length \(\ell_p{F}=n\) for some integer \(n\ge 1\).
Then the Galois group \(\mathrm{Gal}(E_p^{(\infty)}/E)\) of the \(p\)-class tower of \(E\)
is a subgroup of index \(\lbrack E:F\rbrack\)
of the \(p\)-class tower group \(\mathrm{Gal}(F_p^{(\infty)}/F)\) of \(F\)
and the length of the \(p\)-class tower of \(E\) is bounded by \(\ell_p{E}\le n\).
\end{theorem}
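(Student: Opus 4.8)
The plan is to reduce everything to one structural fact: the number fields $E$ and $F$ share the \emph{same} maximal unramified pro-$p$ extension, that is, $E_p^{(\infty)}=F_p^{(\infty)}$. Granting this, the group $\mathrm{Gal}(E_p^{(\infty)}/E)$ becomes literally the subgroup of $\mathfrak{G}:=\mathrm{Gal}(F_p^{(\infty)}/F)$ fixing $E$, and both assertions drop out of the Galois correspondence together with the elementary behaviour of the derived series under passage to subgroups.

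First I would establish the two inclusions that yield $E_p^{(\infty)}=F_p^{(\infty)}$. Since $E/F$ is an unramified abelian $p$-extension, $E$ lies in the Hilbert $p$-class field $F_p^{(1)}$, hence in $F_p^{(\infty)}$; moreover $F_p^{(\infty)}/E$ is unramified, because over the intermediate field $E$ every ramification index divides the corresponding (trivial) ramification index over $F$, and it is a pro-$p$ extension, since $\mathrm{Gal}(F_p^{(\infty)}/E)$ is a closed subgroup of the pro-$p$ group $\mathrm{Gal}(F_p^{(\infty)}/F)$. By the maximality property defining $E_p^{(\infty)}$ this gives $F_p^{(\infty)}\subseteq E_p^{(\infty)}$. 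For the reverse inclusion, $E_p^{(\infty)}/F$ is Galois (the maximal unramified pro-$p$ extension of $E$ is stable under $\mathrm{Gal}(\overline{F}/F)$ because $E/F$ is Galois), it is unramified over $F$ as a tower of two unramified extensions, and $\mathrm{Gal}(E_p^{(\infty)}/F)$ is pro-$p$, being an extension of the finite $p$-group $\mathrm{Gal}(E/F)$ by the pro-$p$ group $\mathrm{Gal}(E_p^{(\infty)}/E)$; hence $E_p^{(\infty)}\subseteq F_p^{(\infty)}$, and the two fields coincide.

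With $E_p^{(\infty)}=F_p^{(\infty)}$ in hand, the first claim is immediate: $\mathrm{Gal}(E_p^{(\infty)}/E)=\mathrm{Gal}(F_p^{(\infty)}/E)$ is the fixed subgroup of $E$ in $\mathfrak{G}$, of index $[E:F]$ by the fundamental theorem of Galois theory. Writing $\mathfrak{H}:=\mathrm{Gal}(E_p^{(\infty)}/E)\le\mathfrak{G}$, one has $\mathfrak{H}^{(k)}\le\mathfrak{G}^{(k)}$ for every $k\ge 0$; since $\ell_p{F}=n$ means $\mathrm{dl}(\mathfrak{G})=n$, i.e.\ $\mathfrak{G}^{(n)}=1$, it follows that $\mathfrak{H}^{(n)}=1$, and therefore $\ell_p{E}=\mathrm{dl}(\mathfrak{H})\le n$. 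The one genuinely delicate point is the equality $E_p^{(\infty)}=F_p^{(\infty)}$ — more precisely the two maximality arguments inside it: the ramification bookkeeping showing $F_p^{(\infty)}/E$ is still unramified, and the verification that $E_p^{(\infty)}/F$ is Galois and pro-$p$, so that it genuinely competes in the property defining $F_p^{(\infty)}$. Everything after that is formal; one may also observe that only ``$E/F$ Galois, unramified, and pro-$p$'' is really used, the abelian hypothesis serving merely to place $E$ in $F_p^{(1)}$ and to make $\mathrm{Gal}(E/F)$ a $p$-group.
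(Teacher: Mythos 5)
Your proposal is correct, and it reaches the conclusion by a somewhat different route than the paper. The paper interleaves the two finite towers stage by stage, $F<E\le F_p^{(1)}\le E_p^{(1)}\le F_p^{(2)}\le E_p^{(2)}\le\ldots\le F_p^{(n)}\le E_p^{(n)}\le F_p^{(n+1)}$, and then uses the hypothesis $\ell_p{F}=n$ to collapse the last three fields, so that both the identification $E_p^{(\infty)}=E_p^{(n)}=F_p^{(n)}=F_p^{(\infty)}$ and the bound $\ell_p{E}\le n$ (via $E_p^{(n)}=E_p^{(n+1)}$) come out of the same stabilization at the top of the chain. You instead prove the single equality $E_p^{(\infty)}=F_p^{(\infty)}$ \emph{unconditionally}, by the two maximality inclusions, and only afterwards bring in the finiteness hypothesis through the purely group-theoretic fact that $\mathfrak{H}^{(k)}\le\mathfrak{G}^{(k)}$ for a subgroup $\mathfrak{H}\le\mathfrak{G}$. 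This buys two things: it makes explicit that the index statement $\bigl(\mathrm{Gal}(F_p^{(\infty)}/F):\mathrm{Gal}(E_p^{(\infty)}/E)\bigr)=\lbrack E:F\rbrack$ needs no finiteness assumption at all, and it supplies the ramification and pro-$p$ bookkeeping that the paper's chain of inclusions leaves implicit (each inclusion $F_p^{(k)}\le E_p^{(k)}\le F_p^{(k+1)}$ requires exactly the kind of argument you spell out for the limit fields). Conversely, the paper's stage-by-stage version records the finer interleaving information $F_p^{(k)}\le E_p^{(k)}\le F_p^{(k+1)}$ for every $k$, which is of independent interest for comparing the individual stages of the two towers, whereas your argument only compares the limits. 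All the delicate points you flag --- multiplicativity of ramification indices in towers, closedness of $\mathrm{Gal}(F_p^{(\infty)}/E)$ in a pro-$p$ group, stability of $E_p^{(\infty)}$ under $\mathrm{Gal}(\overline{F}/F)$, and the fact that an extension of a pro-$p$ group by a pro-$p$ group is pro-$p$ --- are handled correctly, and the identification $\ell_p{E}=\mathrm{dl}(\mathrm{Gal}(E_p^{(\infty)}/E))$ you rely on is exactly the paper's definition of tower length.
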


\begin{proof}
According to the assumptions,
there exists a tower of field extensions,
\[F<E\le F_p^{(1)}\le E_p^{(1)}\le F_p^{(2)}\le E_p^{(2)}\le\ldots\le F_p^{(n)}\le E_p^{(n)}\le F_p^{(n+1)},\]
where \(\ell_p{F}=n\) enforces the coincidence \(F_p^{(n)}=E_p^{(n)}=F_p^{(n+1)}\) of the trailing three fields.
Since \(\mathrm{Gal}(F_p^{(n)}/F)/\mathrm{Gal}(F_p^{(n)}/E)\simeq\mathrm{Gal}(E/F)\),
the group index of \(\mathrm{Gal}(E_p^{(n)}/E)=\mathrm{Gal}(F_p^{(n)}/E)\) in \(\mathrm{Gal}(F_p^{(n)}/F)\)
is equal to the field degree \(\lbrack E:F\rbrack\)
and \(\mathrm{Gal}(E_p^{(\infty)}/E)=\mathrm{Gal}(E_p^{(n)}/E)\)
is a subgroup of index \(\lbrack E:F\rbrack\) of \(\mathrm{Gal}(F_p^{(n)}/F)=\mathrm{Gal}(F_p^{(\infty)}/F)\).
The equality \(E_p^{(n)}=E_p^{(n+1)}\) implies the bound \(\ell_p{E}\le n\).
\end{proof}

\noindent
We shall apply Theorem
\ref{thm:General}
to the situation where \(p=3\), \(n=2\),
and \(E\) is an unramified cyclic cubic extension of \(F\),
whence \(\mathrm{Gal}(E_3^{(\infty)}/E)\) is a maximal subgroup of \(\mathrm{Gal}(F_3^{(\infty)}/F)\).

%\newpage

%--------------------------------------------------------------------------------

\subsection{Application to quadratic base fields}
\label{ss:Quadratic}

%--------------------------------------------------------------------------------

\begin{proposition}
\label{prp:KernelTypeD10}
\noindent
Let \(G\) be a finite \(3\)-group
with elementary bicyclic abelianization \(G/G^\prime\simeq C_3\times C_3\).
Then the following conditions are equivalent:
\begin{enumerate}
\item
The transfer kernel type of \(G\) is \(\mathrm{D}.10\), \(\varkappa(G)\sim (2241)\).
\item
The abelian quotient invariants of the four maximal subgroups \(H_1,\ldots,H_4\) of \(G\)
are \(\tau(G)\sim(21,21,1^3,21)\).
\item
The isomorphism types of the four maximal subgroups of \(G\) are
\(H_1\simeq H_2\simeq H_4\simeq\langle 3^4,3\rangle\) and \(H_3\simeq\langle 3^4,13\rangle\).
\item
The group \(G\) is isomorphic to the Schur \(\sigma\)-group \(\langle 3^5,5\rangle\) with relation rank \(d_2=2\).
\end{enumerate}
\end{proposition}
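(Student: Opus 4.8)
The plan is to establish the chain of implications $(1)\Rightarrow(4)\Rightarrow(3)\Rightarrow(2)\Rightarrow(1)$, relying heavily on the structural results about metabelian $3$-groups of coclass one obtained in Theorem~\ref{thm:MaxSbgCc1} and on the Successive Approximation Theorem~\ref{thm:SuccessiveApproximation}. The pivot of the argument is the group $\langle 3^5,5\rangle=\langle 243,5\rangle$, which by Theorem~\ref{thm:ImaginaryTwoStage} is one of the two metabelian Schur $\sigma$-groups with abelianization $C_3\times C_3$, already known to be characterized by the second order Artin pattern $\mathrm{AP}^{(2)}=(\lbrack 1^2;(21,21,1^3,21)\rbrack,\lbrack 1;(2241)\rbrack)$.

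First I would show $(1)\Rightarrow(4)$. Assume $\varkappa(G)\sim(2241)$, which is type $\mathrm{D}.10$. The transfer kernel type is a phenomenon of second order by Theorem~\ref{thm:SuccessiveApproximation}, so it is already detected by the metabelian quotient $\mathfrak{M}=G/G''$; hence $\mathfrak{M}$ is a metabelian $3$-group with $\mathfrak{M}/\mathfrak{M}'\simeq C_3\times C_3$ and transfer kernel type $\mathrm{D}.10$. Invoking the classification of metabelian $3$-groups with bicyclic abelianization (the descendant tree of $\langle 9,2\rangle$ used in Theorems~\ref{thm:ImaginaryTwoStage} and~\ref{thm:RealTwoStage}), type $\mathrm{D}.10$ occurs only on a single coclass-$2$ branch whose minimal vertex is $\langle 243,5\rangle$, and all proper descendants on that branch have at least one maximal subgroup of larger order, which forces $\tau$-values incompatible with item (2); one then checks that the only candidate with the full second order Artin pattern matching type $\mathrm{D}.10$ and being a Schur $\sigma$-group (relation rank $d_2=2$) is $\langle 243,5\rangle$ itself. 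The implication $(4)\Rightarrow(3)$ is a direct computation inside the concrete group $\langle 243,5\rangle$: one identifies its four maximal subgroups, each of order $3^4$ and of coclass one, using the parametrized presentations of~\S\ref{ss:Presentations} together with Theorem~\ref{thm:MaxSbgCc1} and Lemma~\ref{lem:PowerRelations} to read off their isomorphism types as $\langle 81,3\rangle$ (three times) and $\langle 81,13\rangle$ (once). The implication $(3)\Rightarrow(2)$ is immediate, since $\langle 81,3\rangle$ has abelian quotient invariants $21$ (it is the extraspecial-type group of exponent... — more precisely $G_0^4(0,1)$ with $\mathrm{AQI}=(21)$ — wait: $\langle 81,3\rangle$ has abelianization $C_9\times C_3$, i.e.\ type $21$) while $\langle 81,13\rangle$ has abelianization $C_3^3$, i.e.\ type $1^3$, giving exactly $\tau(G)\sim(21,21,1^3,21)$.

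Finally, for $(2)\Rightarrow(1)$, I would argue that the first component of the second order Artin pattern already pins down $G/G''$: among all metabelian $3$-groups with $G/G'\simeq C_3\times C_3$, the signature $\tau\sim(21,21,1^3,21)$ (one maximal subgroup of $3$-rank three, the other three of type $21$) is realized only by the $\langle 243,5\rangle$-branch, and by the Shafarevich-type rigidity of a single metabelian $\sigma$-group in that family we recover $G\simeq\langle 243,5\rangle$, which then forces $\varkappa(G)\sim(2241)$. I expect the main obstacle to be the $(1)\Rightarrow(4)$ step, specifically verifying that \emph{no} larger descendant in the relevant branch of the descendant tree can share the type $\mathrm{D}.10$ while still being a Schur $\sigma$-group: this requires either a careful finiteness/capitulation argument ruling out infinite descent (as in the cover discussions of Remarks~\ref{rmk:ImaginaryThreeStage} and~\ref{rmk:RealThreeStage}) or an explicit appeal to the tree data of~\cite{Ma1,Ma15}. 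Everything downstream of identifying the group is routine commutator calculus of the kind already carried out in the proof of Theorem~\ref{thm:MaxSbgCc1}.
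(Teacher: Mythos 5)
Your proof has the same computational core as the paper's: the paper's entire written proof consists of the step you call $(4)\Rightarrow(3)$, namely taking the explicit pc-presentation of $\langle 243,5\rangle$, writing down $H_1=\langle y,s_2\rangle$, $H_2=\langle x,s_2,t_3\rangle$, $H_3=\langle xy,s_2\rangle$, $H_4=\langle xy^2,s_2\rangle$, and identifying them with $\langle 81,3\rangle$ (three times) and $\langle 81,13\rangle$ (once) by comparing the cube of each generator with the commutator $\lbrack s_2,\cdot\rbrack$ via Lemma~\ref{lem:PowerRelations}; your $(3)\Rightarrow(2)$ is then the same trivial reading-off of abelianizations. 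The equivalences with (1) and with uniqueness of the group in (4) are not argued in the paper's proof at all -- they are tacitly imported from Theorem~\ref{thm:ImaginaryTwoStage} and the Nebelung/descendant-tree classification -- so your extra scaffolding for $(1)\Rightarrow(4)$ and $(2)\Rightarrow(1)$ goes beyond what the author writes, and you correctly isolate the one genuinely non-routine point: that no group other than $\langle 243,5\rangle$ itself (in particular no non-metabelian $G$ with $G/G''\simeq\langle 243,5\rangle$) can carry type $\mathrm{D}.10$, i.e.\ that this vertex is terminal with trivial cover. Two small cautions: inside your $(1)\Rightarrow(4)$ you rule out larger descendants by saying their $\tau$-values are \textquotedblleft incompatible with item (2)\textquotedblright, which you are not entitled to use at that stage of the cycle; and $\langle 81,3\rangle$ is \emph{not} $G_0^4(0,1)$ -- it has nilpotency class $2$ and coclass $2$, so it lies outside the maximal-class family of \S\ref{ss:Presentations} -- although your self-corrected statement that its abelianization is $C_9\times C_3$ (type $21$) is the only fact the argument actually needs.
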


\begin{proof}
We put \(G:=\langle 243,5\rangle\) and use the presentation
\cite{MAGMA}
\[G=\langle x,y,s_2,s_3,t_3\mid s_2=\lbrack y,x\rbrack, s_3=\lbrack s_2,x\rbrack, t_3=\lbrack s_2,y\rbrack, x^3=s_3, y^3=s_3\rangle.\]
Then we obtain the maximal subgroups \\
\(H_1=\langle y,G^\prime\rangle=\langle y,s_2,s_3\rangle\), since \(t_3=\lbrack s_2,y\rbrack\), \\
\(H_2=\langle x,G^\prime\rangle=\langle x,s_2,t_3\rangle\), since \(s_3=\lbrack s_2,x\rbrack\), \\
\(H_3=\langle xy,G^\prime\rangle=\langle xy,s_2,s_3\rangle\), since \(\lbrack s_2,xy\rbrack=s_3t_3\), \\
\(H_4=\langle xy^2,G^\prime\rangle=\langle xy^2,s_2,s_3\rangle\), since \(\lbrack s_2,xy^2\rbrack=s_3t_3^2\). \\
Using Lemma
\ref{lem:PowerRelations},
and comparing to the abstract presentations
\cite{MAGMA} \\
\(\langle 81,3\rangle=\langle \xi,\upsilon,\sigma_2,\tau\mid\sigma_2=\lbrack \upsilon,\xi\rbrack, \tau=\xi^3\rangle\) and \\
\(\langle 81,13\rangle=\langle \xi,\upsilon,\zeta,\sigma_2\mid\sigma_2=\lbrack \upsilon,\xi\rbrack,\xi^3=\sigma_2,\upsilon^3=\zeta^3=1\rangle\), \\
we conclude \\
\(H_1=\langle y,s_2,s_3\rangle=\langle y,s_2\rangle\simeq\langle 81,3\rangle\), since \(y^3=s_3\ne\lbrack s_2,y\rbrack=t_3\), \\
\(H_2=\langle x,s_2,t_3\rangle\simeq\langle 81,13\rangle\), since \(x^3=s_3=\lbrack s_2,x\rbrack\), \\
\(H_3=\langle xy,s_2,s_3\rangle=\langle xy,s_2\rangle\simeq\langle 81,3\rangle\), since \((xy)^3=t_3^2\ne\lbrack s_2,xy\rbrack=s_3t_3\), \\
\(H_4=\langle xy^2,s_2,s_3\rangle=\langle xy^2,s_2\rangle\simeq\langle 81,3\rangle\), since \((xy^2)^3=s_3^2t_3^2\ne\lbrack s_2,xy^2\rbrack=s_3t_3^2\).
\end{proof}

%--------------------------------------------------------------------------------

\begin{theorem}
\label{thm:KernelTypeD10}
\noindent
Let \(F=\mathbb{Q}(\sqrt{d})\) be a quadratic field
with elementary bicyclic \(3\)-class group \(\mathrm{Cl}_3{F}\simeq C_3\times C_3\).
Then the following conditions are equivalent:
\begin{enumerate}
\item
The transfer kernel type of \(F\) is \(\mathrm{D}.10\), \(\varkappa(F)\sim (2241)\).
\item
The abelian type invariants of the \(3\)-class groups \(\mathrm{Cl}_3{E_i}\) of the four unramified cyclic cubic extensions \(E_i/F\)
are \(\tau(F)\sim(21,21,1^3,21)\).
\item
The second \(3\)-class group \(\mathrm{G}_3^2{F}\) of \(F\) has the maximal subgroups
\(H_1\simeq H_2\simeq H_4\simeq\langle 3^4,3\rangle\) and \(H_3\simeq\langle 3^4,13\rangle\).
\item
The \(3\)-class tower group \(\mathrm{G}_3^\infty{F}\) of \(F\) is the Schur \(\sigma\)-group \(\langle 3^5,5\rangle\) with relation rank \(d_2=2\).
\end{enumerate}
\end{theorem}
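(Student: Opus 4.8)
The plan is to reduce the statement to its group-theoretic model, Proposition~\ref{prp:KernelTypeD10}, by means of the standard dictionary between a quadratic field \(F\) and its second \(3\)-class group \(\mathfrak{M}:=\mathrm{G}_3^2{F}\). First I would record that, by Artin reciprocity, \(\mathfrak{M}\) is a finite \(3\)-group with \(\mathfrak{M}/\mathfrak{M}^\prime\simeq\mathrm{Cl}_3{F}\simeq C_3\times C_3\), so that Proposition~\ref{prp:KernelTypeD10} is applicable to \(G:=\mathfrak{M}\); moreover the four maximal subgroups \(S_i\in\mathrm{Lyr}_1{\mathfrak{M}}\) are given by \(S_i=\mathrm{Gal}(F_3^{(2)}/E_i)\) for the four fields \(E_i\in\mathrm{Lyr}_1{F}\), and \(S_i/S_i^\prime\simeq\mathrm{Cl}_3{E_i}\) since the first Hilbert \(3\)-class field of each \(E_i\le F_3^{(1)}\) lies inside \(F_3^{(2)}\).

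Next I would transport the first three conditions of the theorem to \(\mathfrak{M}\). Condition (1) for \(F\) is equivalent to condition (1) for \(\mathfrak{M}\) by the transfer-kernel part of the Successive Approximation Theorem~\ref{thm:SuccessiveApproximation}, which yields \(\varkappa(\mathfrak{M})\sim\varkappa(F)\); condition (2) for \(F\) unwinds, through the definition of \(\tau^{(2)}{F}\), to the assertion that \((\mathrm{ATI}(\mathrm{Cl}_3{E_i}))_{1\le i\le 4}\sim(21,21,1^3,21)\), and the identity \(\tau^{(2)}(\mathfrak{M})=\tau^{(2)}{F}\) from Theorem~\ref{thm:SuccessiveApproximation} (together with Theorem~\ref{thm:TauGrpFld}) turns this into condition (2) for \(\mathfrak{M}\); and condition (3) for \(F\) is literally condition (3) for \(\mathfrak{M}\) under the above identification of maximal subgroups. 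Proposition~\ref{prp:KernelTypeD10} then shows that conditions (1), (2) and (3) of the theorem are all equivalent to the single isomorphism \(\mathfrak{M}\simeq\langle 3^5,5\rangle\).

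It remains to pass from \(\mathfrak{M}\simeq\langle 3^5,5\rangle\) to \(\mathrm{G}_3^\infty{F}\simeq\langle 3^5,5\rangle\), that is, to condition (4). One implication is immediate, since \(\langle 3^5,5\rangle\) is metabelian and hence coincides with its own metabelianization: if \(\mathrm{G}_3^\infty{F}\simeq\langle 3^5,5\rangle\) then \(\mathfrak{M}=\mathrm{G}_3^\infty{F}/(\mathrm{G}_3^\infty{F})^{\prime\prime}\simeq\langle 3^5,5\rangle\). For the converse I would note that \(\mathfrak{M}\simeq\langle 3^5,5\rangle\) determines the second order Artin pattern, \(\mathrm{AP}^{(2)}{F}=(\lbrack 1^2;(21,21,1^3,21)\rbrack,\lbrack 1;(2241)\rbrack)\) (it depends on \(\mathfrak{M}\) alone, and its value for \(\langle 3^5,5\rangle\) is exactly the one computed in Proposition~\ref{prp:KernelTypeD10}), and then invoke part (2)(a) of Theorem~\ref{thm:ImaginaryTwoStage}, stated there for an arbitrary number field, which gives \(\mathrm{G}_3^\infty{F}\simeq\langle 243,5\rangle=\langle 3^5,5\rangle\). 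This closes the cycle of equivalences.

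The main obstacle is precisely this last step: a priori the metabelian group \(\mathfrak{M}\simeq\langle 3^5,5\rangle\) controls only the metabelianization of the tower group, and one must exclude that \(\mathrm{G}_3^\infty{F}\) is a genuine non-metabelian descendant of \(\langle 3^5,5\rangle\) realizable over a quadratic base field. This is exactly the content packaged in Theorem~\ref{thm:ImaginaryTwoStage}: combining the Shafarevich bound on the relation rank (\(d_2\le 2\) when \(F\) is imaginary quadratic, \(d_2\le 3\) when \(F\) is real quadratic) with a search in the descendant tree of \(\langle 9,2\rangle\simeq C_3\times C_3\) shows that the Schur \(\sigma\)-group \(\langle 243,5\rangle\), whose relation rank is \(d_2=2\), admits no admissible proper descendant. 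Once Theorem~\ref{thm:ImaginaryTwoStage} is granted, everything else is routine bookkeeping with the field/group dictionary and the explicit presentation used in the proof of Proposition~\ref{prp:KernelTypeD10}.
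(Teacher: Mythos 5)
Your proposal is correct and takes essentially the same route as the paper, whose entire proof reads ``the claims follow from Proposition~\ref{prp:KernelTypeD10} by applying the Successive Approximation Theorem~\ref{thm:SuccessiveApproximation}.'' You in fact supply more detail than the paper does, notably the field--group dictionary for the maximal subgroups \(S_i=\mathrm{Gal}(F_3^{(2)}/E_i)\) and the explicit closing of the gap between \(\mathrm{G}_3^2{F}\simeq\langle 243,5\rangle\) and the full tower group \(\mathrm{G}_3^\infty{F}\) via Theorem~\ref{thm:ImaginaryTwoStage}(2)(a), a step the paper leaves implicit.
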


\begin{proof}
The claims follow from Proposition
\ref{prp:KernelTypeD10}
by applying the Successive Approximation Theorem
\ref{thm:SuccessiveApproximation}
of first order.
\end{proof}

%--------------------------------------------------------------------------------

\begin{corollary}
\label{cor:KernelTypeD10}
\noindent
Let \(F\) be a quadratic field which satisfies one of the equivalent conditions in Theorem
\ref{thm:KernelTypeD10}.
Then the length of the \(3\)-class tower of \(F\) is \(\ell_3{F}=2\).
The four unramified cyclic cubic extensions \(E_i/F\)
are absolutely dihedral of degree \(6\),
with torsionfree Dirichlet unit rank \(r\ge 2\),
and possess \(3\)-class towers of length \(\ell_3{E_i}=2\).
More precisely,
\(\mathrm{Cl}_3{E_3}\simeq C_3\times C_3\times C_3\)
and \(\mathrm{G}_3^\infty{E_3}\simeq\langle 3^4,13\rangle\) with relation rank \(d_2=5\),
but \(\mathrm{Cl}_3{E_i}\simeq C_9\times C_3\)
and \(\mathrm{G}_3^\infty{E_i}\simeq\langle 3^4,3\rangle\) with relation rank  \(d_2=4\)
for \(i\in\lbrace 1,2,4\rbrace\).
\end{corollary}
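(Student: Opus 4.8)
The plan is to combine the three structural inputs already at hand --- the identification \(\mathrm{G}_3^\infty{F}\simeq\langle 3^5,5\rangle\) from Theorem~\ref{thm:KernelTypeD10}, the list of maximal subgroups of \(\langle 3^5,5\rangle\) from Proposition~\ref{prp:KernelTypeD10}, and the descent restriction of Theorem~\ref{thm:General} --- with two classical facts about a quadratic base field: that its Hilbert \(3\)-class field is normal over \(\mathbb{Q}\), and that the non-trivial automorphism of \(F/\mathbb{Q}\) acts on \(\mathrm{Cl}_3{F}\) by inversion.

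First I would pin down \(\ell_3{F}\) and set up the descent. By Theorem~\ref{thm:KernelTypeD10}(4) the \(3\)-tower group \(\mathrm{G}_3^\infty{F}\) is the non-abelian, metabelian Schur \(\sigma\)-group \(\langle 3^5,5\rangle\), hence \(\ell_3{F}=\mathrm{dl}(\langle 3^5,5\rangle)=2\). Now I apply Theorem~\ref{thm:General} with \(p=3\) and \(n=2\) to each of the four unramified cyclic cubic extensions \(E_i/F\): this yields \(\ell_3{E_i}\le 2\) and realizes \(\mathrm{Gal}((E_i)_3^{(\infty)}/E_i)\) as a subgroup of index \([E_i:F]=3\) of \(\langle 3^5,5\rangle\). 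Since \(\mathrm{Cl}_3{F}\simeq C_3\times C_3\) has exactly four subgroups of index \(3\), these four index-\(3\) subgroups exhaust the maximal subgroups \(H_1,\dots,H_4\) of \(\langle 3^5,5\rangle\). Inspecting the chain of fields in the proof of Theorem~\ref{thm:General} for \(n=2\), one sees that it collapses at the top, \((E_i)_3^{(\infty)}=F_3^{(\infty)}\), so \(\mathrm{G}_3^\infty{E_i}=\mathrm{Gal}(F_3^{(\infty)}/E_i)\) is a genuine maximal subgroup of \(\langle 3^5,5\rangle\), not a proper quotient of one.

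Next I would match subgroups to fields. Proposition~\ref{prp:KernelTypeD10} records that three of the maximal subgroups of \(\langle 3^5,5\rangle\) are isomorphic to \(\langle 3^4,3\rangle\), with abelianization \(C_9\times C_3\), and one is isomorphic to \(\langle 3^4,13\rangle\), with abelianization \(C_3\times C_3\times C_3\). By Theorem~\ref{thm:TauGrpFld}, \(\mathrm{Cl}_3{E_i}\simeq\mathrm{Gal}(F_3^{(\infty)}/E_i)^{ab}\); hence the unique \(E_i\) with elementary tricyclic \(3\)-class group --- labelled \(E_3\) in accordance with the position of the entry \(1^3\) in \(\tau(F)\sim(21,21,1^3,21)\) --- satisfies \(\mathrm{Cl}_3{E_3}\simeq C_3^3\) and \(\mathrm{G}_3^\infty{E_3}\simeq\langle 3^4,13\rangle\), while \(\mathrm{Cl}_3{E_i}\simeq C_9\times C_3\) and \(\mathrm{G}_3^\infty{E_i}\simeq\langle 3^4,3\rangle\) for \(i\in\{1,2,4\}\). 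Since these four groups are non-abelian and metabelian, \(\ell_3{E_i}=2\) exactly; their relation ranks \(d_2(\langle 3^4,13\rangle)=5\) and \(d_2(\langle 3^4,3\rangle)=4\) are the \(3\)-multiplicator ranks of the corresponding SmallGroups, read from the database or computed with \cite{MAGMA}, and they are consistent with the Shafarevich bounds for the sextic fields \(E_i\) (cf.\ \cite{Ma10}).

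Finally I would supply the arithmetic of the \(E_i\). The Hilbert \(3\)-class field \(F_3^{(1)}\) is characteristic over \(\mathbb{Q}\), and the non-trivial automorphism of \(F/\mathbb{Q}\) acts on \(\mathrm{Cl}_3{F}\simeq C_3\times C_3\) by inversion, hence stabilizes every index-\(3\) subgroup; therefore each \(E_i\) is normal over \(\mathbb{Q}\) of degree \(6\) with non-abelian Galois group, i.e.\ \(\mathrm{Gal}(E_i/\mathbb{Q})\simeq S_3\), an absolutely dihedral field of degree \(6\). If \(F\) is imaginary quadratic then \(E_i\) is totally complex with signature \((0,3)\), and if \(F\) is real quadratic then \(E_i\) is totally real with signature \((6,0)\); in either case the torsionfree Dirichlet unit rank is \(r=r_1+r_2-1\ge 2\). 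I expect the only point requiring genuine care to be the collapse \((E_i)_3^{(\infty)}=F_3^{(\infty)}\) in the second step --- that is, checking that the \(3\)-tower of each \(E_i\) really climbs all the way to \(F_3^{(\infty)}\), so that \(\mathrm{G}_3^\infty{E_i}\) is the full maximal subgroup rather than one of its quotients; the remaining assertions are immediate from the cited results or are routine facts about dihedral sextic fields.
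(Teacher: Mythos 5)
Your proposal is correct and follows essentially the same route as the paper, whose proof is just the one-line citation of Theorem~\ref{thm:General}, Theorem~\ref{thm:KernelTypeD10} and the Shafarevich theorem: identify \(\mathrm{G}_3^\infty{F}\simeq\langle 3^5,5\rangle\), use Theorem~\ref{thm:General} to realize each \(\mathrm{G}_3^\infty{E_i}\) as the full maximal subgroup \(\mathrm{Gal}(F_3^{(\infty)}/E_i)\), and read off the isomorphism types, abelianizations and relation ranks from Proposition~\ref{prp:KernelTypeD10} and the SmallGroups data. The only difference is that you spell out the details (the collapse \((E_i)_3^{(\infty)}=F_3^{(\infty)}\), the dihedral structure of \(E_i/\mathbb{Q}\), the signatures and unit ranks) that the paper leaves implicit, and all of these are handled correctly.
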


\begin{proof}
This is a consequence of Theorems
\ref{thm:General}
and
\ref{thm:KernelTypeD10},
satisfying the Shafarevich theorem.
\end{proof}

%--------------------------------------------------------------------------------

\begin{proposition}
\label{prp:KernelTypeD5}
\noindent
Let \(G\) be a finite \(3\)-group
with elementary bicyclic abelianization \(G/G^\prime\simeq C_3\times C_3\).
Then the following conditions are equivalent:
\begin{enumerate}
\item
The transfer kernel type of \(G\) is \(\mathrm{D}.5\), \(\varkappa(G)\sim (4224)\).
\item
The abelian quotient invariants of the four maximal subgroups \(H_1,\ldots,H_4\) of \(G\)
are \(\tau(G)\sim(1^3,21,1^3,21)\).
\item
The isomorphism types of the four maximal subgroups of \(G\) are
\(H_1\simeq H_3\simeq\langle 3^4,13\rangle\) and \(H_2\simeq H_4\simeq\langle 3^4,3\rangle\).
\item
The group \(G\) is isomorphic to the Schur \(\sigma\)-group \(\langle 3^5,7\rangle\) with relation rank \(d_2=2\).
\end{enumerate}
\end{proposition}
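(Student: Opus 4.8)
The plan is to argue in exact analogy with Proposition \ref{prp:KernelTypeD10}, with the Schur \(\sigma\)-group \(\langle 3^5,5\rangle\) replaced by \(\langle 3^5,7\rangle\) throughout. First I would set \(G:=\langle 243,7\rangle\) and fix a polycyclic power-commutator presentation of \(G\), normalizing the generators \(x,y\) and the defining commutators
\[
s_2=\lbrack y,x\rbrack,\qquad s_3=\lbrack s_2,x\rbrack,\qquad t_3=\lbrack s_2,y\rbrack
\]
exactly as for \(\langle 243,5\rangle\) in Proposition \ref{prp:KernelTypeD10}, so that \(G^\prime=\langle s_2,s_3,t_3\rangle\), \(\gamma_3(G)=\langle s_3,t_3\rangle\), and \(t_3\in\zeta(G)\) with \(t_3^3=1\) and \(\lbrack s_3,y\rbrack=1\); the sole structural difference from the presentation of \(\langle 243,5\rangle\) lies in the power relations for \(x^3\) and \(y^3\) modulo \(\gamma_3(G)\) --- for definiteness one may take \(x^3=t_3\), \(y^3=t_3\) in place of \(x^3=s_3\), \(y^3=s_3\) --- and it is precisely this difference which turns the transfer kernel type from \(\mathrm{D}.10\) into \(\mathrm{D}.5\). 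Under this normalization the power formulae of Lemma \ref{lem:PowerRelations} are available.

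The core of the proof is the explicit determination of the isomorphism types of the four maximal subgroups \(H_1=\langle y,G^\prime\rangle\), \(H_2=\langle x,G^\prime\rangle\), \(H_3=\langle xy,G^\prime\rangle\), \(H_4=\langle xy^2,G^\prime\rangle\). Using \(\lbrack s_2,xy\rbrack=s_3t_3\) and \(\lbrack s_2,xy^2\rbrack=s_3t_3^2\) I would write down generators for the \(H_i\) together with their commutator subgroups --- in particular \(H_1^\prime=\langle t_3\rangle\), \(H_2^\prime=\langle s_3\rangle\), \(H_3^\prime=\langle s_3t_3\rangle\), and \(H_4^\prime=\langle s_3t_3^2\rangle\) --- and then reduce the cubes \((xy)^3=x^3y^3s_2^3s_3t_3^2\) and \((xy^2)^3=x^3y^6s_2^6s_3^2t_3^2\) furnished by Lemma \ref{lem:PowerRelations} modulo the relations of \(G\). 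The decisive alternative, as in Proposition \ref{prp:KernelTypeD10}, is whether the cube of the distinguished generator of \(H_i\) lies in \(H_i^\prime\), in which case \(H_i\simeq\langle 3^4,13\rangle\) with \(H_i^{ab}\simeq C_3\times C_3\times C_3\), or lies outside it, in which case \(H_i\simeq\langle 3^4,3\rangle\) with \(H_i^{ab}\simeq C_9\times C_3\); comparison with the abstract presentations of \(\langle 81,3\rangle\) and \(\langle 81,13\rangle\) recalled in the proof of Proposition \ref{prp:KernelTypeD10} should then yield
\[
H_1\simeq H_3\simeq\langle 3^4,13\rangle,\qquad H_2\simeq H_4\simeq\langle 3^4,3\rangle,
\]
which is statement (3). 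Reading off abelian quotient invariants gives \(\mathrm{AQI}(\langle 3^4,13\rangle)=\lbrack 1^3\rbrack\) and \(\mathrm{AQI}(\langle 3^4,3\rangle)=\lbrack 21\rbrack\), hence \(\tau(G)\sim(1^3,21,1^3,21)\), which is statement (2); this settles \((4)\Rightarrow(3)\Rightarrow(2)\).

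For the remaining implications \((2)\Rightarrow(1)\) and \((1)\Rightarrow(4)\) I would argue as in the corresponding directions of Proposition \ref{prp:KernelTypeD10}: by the Successive Approximation Theorem \ref{thm:SuccessiveApproximation} the transfer kernel type is already carried by the metabelianization \(G/G^{\prime\prime}\), so it suffices to treat metabelian groups, and then the classification of metabelian \(3\)-groups with \(G/G^\prime\simeq C_3\times C_3\) due to Nebelung \cite{Ne} --- equivalently, the search of the descendant tree \(\mathcal{T}(\langle 9,2\rangle)\) underlying the proof of Theorem \ref{thm:ImaginaryTwoStage} --- exhibits \(\langle 243,7\rangle\) as the unique vertex whose first-order Artin pattern has \(\tau\)-component \((1^3,21,1^3,21)\), respectively \(\varkappa\)-component \((4224)\). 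The main obstacle I anticipate is not the commutator calculus, which Lemma \ref{lem:PowerRelations} reduces to routine substitution, but rather (i) pinning down the precise power relations of \(\langle 243,7\rangle\) under the chosen normalization so that the four subgroup identifications in (3) come out exactly as stated, and (ii) confirming the uniqueness assertion that the Artin pattern \((1^3,21,1^3,21)\,/\,(4224)\) is realized by no finite \(3\)-group with elementary bicyclic abelianization other than \(\langle 243,7\rangle\).
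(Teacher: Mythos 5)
Your sketch follows the paper's own proof essentially step for step: the paper likewise fixes a pc-presentation of \(\langle 243,7\rangle\), reads off the derived subgroups from \(\lbrack s_2,xy\rbrack=s_3t_3\) and \(\lbrack s_2,xy^2\rbrack=s_3t_3^2\), applies Lemma \ref{lem:PowerRelations} to the cubes, and decides between \(\langle 81,3\rangle\) and \(\langle 81,13\rangle\) by exactly your criterion of whether the cube of the distinguished generator lies in \(H_i^\prime\); the implications beyond \((4)\Rightarrow(3)\Rightarrow(2)\) are, as in your sketch, left to the uniqueness of the Artin pattern asserted in Theorem \ref{thm:ImaginaryTwoStage}(2). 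The one divergence is your flagged obstacle (i): the paper's normalization is \(x^3=s_3\), \(y^3=s_3^2\) rather than \(x^3=y^3=t_3\), which makes the two isomorphism types land as \(H_1\simeq H_3\simeq\langle 81,3\rangle\) and \(H_2\simeq H_4\simeq\langle 81,13\rangle\) --- the opposite labelling from item (3) of the statement, immaterial under the \(\sim\)-equivalence. Your guessed power relations in fact also define a group with \(\tau\sim(1^3,21,1^3,21)\) and \(\varkappa\sim(4224)\), so your computation would go through as well; in both cases the identification of the chosen presentation with the SmallGroups entry \(\langle 243,7\rangle\) is outsourced to MAGMA, which is precisely the verification you anticipated as the residual gap.
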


\begin{proof}
We put \(G:=\langle 243,7\rangle\) and use the presentation
\cite{MAGMA}
\[G=\langle x,y,s_2,s_3,t_3\mid s_2=\lbrack y,x\rbrack, s_3=\lbrack s_2,x\rbrack, t_3=\lbrack s_2,y\rbrack, x^3=s_3, y^3=s_3^2\rangle.\]
Similarly as in Proposition
\ref{prp:KernelTypeD10},
we obtain the maximal subgroups \\
\(H_1=\langle y,G^\prime\rangle=\langle y,s_2,s_3\rangle\),
\(H_2=\langle x,G^\prime\rangle=\langle x,s_2,t_3\rangle\), \\
\(H_3=\langle xy,G^\prime\rangle=\langle xy,s_2,s_3\rangle\), and
\(H_4=\langle xy^2,G^\prime\rangle=\langle xy^2,s_2,s_3\rangle\). \\
Using Lemma
\ref{lem:PowerRelations},
and comparing to the abstract presentations \\
\(\langle 81,3\rangle=\langle \xi,\upsilon,\sigma_2,\tau\mid\sigma_2=\lbrack \upsilon,\xi\rbrack, \tau=\xi^3\rangle\) and \\
\(\langle 81,13\rangle=\langle \xi,\upsilon,\zeta,\sigma_2\mid\sigma_2=\lbrack \upsilon,\xi\rbrack,\xi^3=\sigma_2,\upsilon^3=\zeta^3=1\rangle\), \\
we conclude \\
\(H_1=\langle y,s_2,s_3\rangle=\langle y,s_2\rangle\simeq\langle 81,3\rangle\), since \(y^3=s_3^2\ne\lbrack s_2,y\rbrack=t_3\), \\
\(H_2=\langle x,s_2,t_3\rangle\simeq\langle 81,13\rangle\), since \(x^3=s_3=\lbrack s_2,x\rbrack\), \\
\(H_3=\langle xy,s_2,s_3\rangle=\langle xy,s_2\rangle\simeq\langle 81,3\rangle\), since \((xy)^3=s_3t_3^2\ne\lbrack s_2,xy\rbrack=s_3t_3\), \\
\(H_4=\langle xy^2,s_2,s_3\rangle\simeq\langle 81,13\rangle\), since \((xy^2)^3=s_3t_3^2=\lbrack s_2,xy^2\rbrack\).
\end{proof}

%--------------------------------------------------------------------------------

\begin{theorem}
\label{thm:KernelTypeD5}
\noindent
Let \(F=\mathbb{Q}(\sqrt{d})\) be a quadratic field
with elementary bicyclic \(3\)-class group \(\mathrm{Cl}_3{F}\simeq C_3\times C_3\).
Then the following conditions are equivalent:
\begin{enumerate}
\item
The transfer kernel type of \(F\) is \(\mathrm{D}.5\), \(\varkappa(F)\sim (4224)\).
\item
The abelian type invariants of the \(3\)-class groups \(\mathrm{Cl}_3{E_i}\) of the four unramified cyclic cubic extensions \(E_i/F\)
are \(\tau(F)\sim(1^3,21,1^3,21)\).
\item
The second \(3\)-class group \(\mathrm{G}_3^2{F}\) of \(F\) has the maximal subgroups
\(H_1\simeq H_3\simeq\langle 3^4,13\rangle\) and \(H_2\simeq H_4\simeq\langle 3^4,3\rangle\).
\item
The \(3\)-class tower group \(\mathrm{G}_3^\infty{F}\) of \(F\) is the Schur \(\sigma\)-group \(\langle 3^5,7\rangle\) with relation rank \(d_2=2\).
\end{enumerate}
\end{theorem}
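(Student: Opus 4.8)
The plan is to mirror the proof of Theorem~\ref{thm:KernelTypeD10}: transport the arithmetic conditions on \(F\) to purely group-theoretic conditions on its second \(3\)-class group, and then invoke the group-theoretic Proposition~\ref{prp:KernelTypeD5}.

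First I would set \(\mathfrak{M}:=\mathrm{G}_3^2{F}=\mathrm{Gal}(F_3^{(2)}/F)\), a finite metabelian \(3\)-group with \(\mathfrak{M}/\mathfrak{M}^\prime\simeq\mathrm{Cl}_3{F}\simeq C_3\times C_3\), whose four maximal subgroups \(H_1,\dots,H_4\) correspond under the Artin map to the four unramified cyclic cubic extensions \(E_1,\dots,E_4\) of \(F\). By Theorem~\ref{thm:TauGrpFld} together with the Successive Approximation Theorem~\ref{thm:SuccessiveApproximation} — which asserts that the metabelian group \(\mathfrak{M}\) already fixes the transfer kernel type and that \(\tau^{(2)}\mathfrak{M}=\tau^{(2)}{F}\) at the level \(n=m=2\) — the orbit \(\varkappa(F)^{\mathrm{S}_4}\) coincides with \(\varkappa(\mathfrak{M})^{\mathrm{S}_4}\), and the family \((\mathrm{ATI}(\mathrm{Cl}_3{E_i}))_i\) coincides with \((\mathrm{AQI}(H_i))_i\), both up to the simultaneous relabelling recorded by \(\sim\). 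Hence conditions (1), (2), (3) of the theorem are exactly conditions (1), (2), (3) of Proposition~\ref{prp:KernelTypeD5} applied to the group \(G:=\mathfrak{M}\), and their mutual equivalence is immediate from that proposition; in particular any of them forces \(\mathfrak{M}\simeq\langle 3^5,7\rangle\).

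It then remains to pass from \(\mathfrak{M}\simeq\langle 3^5,7\rangle\) to condition (4). Since \(\langle 3^5,7\rangle=\langle 243,7\rangle\) is metabelian and is a Schur \(\sigma\)-group of relation rank \(d_2=2\), the second order Artin pattern \(\mathrm{AP}^{(2)}{F}=(\lbrack 1^2;(1^3,21,1^3,21)\rbrack,\lbrack 1;(4224)\rbrack)\) determines the \(3\)-tower group uniquely, \(\mathrm{G}_3^\infty{F}\simeq\langle 243,7\rangle\), by Theorem~\ref{thm:ImaginaryTwoStage}, part (2)(b), which is valid for an arbitrary number field. The converse implication (4)\(\Rightarrow\)(1) is trivial: then \(\mathrm{G}_3^2{F}\) is the metabelianization of \(\langle 243,7\rangle\), hence \(\langle 243,7\rangle\) again, and Proposition~\ref{prp:KernelTypeD5} returns the transfer kernel type \(\mathrm{D}.5\).

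The only point that is not pure bookkeeping is the termination of the tower in the last step: one must know that \(\langle 243,7\rangle\) admits no non-metabelian cover that can occur as a \(3\)-tower group of a number field. This is precisely the feature that makes \(\langle 243,7\rangle\) a terminal Schur \(\sigma\)-group among admissible \(3\)-tower groups — it follows from the Shafarevich inequality once the relation-rank bound \(d_2=d_1=2\) is saturated — and it has already been recorded in Theorem~\ref{thm:ImaginaryTwoStage}. Everything else reduces, as above, to the translation between \(F\), its second \(3\)-class group \(\mathfrak{M}\), and the group-theoretic Proposition~\ref{prp:KernelTypeD5}.
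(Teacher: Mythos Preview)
Your proof is correct and takes essentially the same approach as the paper's (very terse) proof, which simply invokes Proposition~\ref{prp:KernelTypeD5} together with the Successive Approximation Theorem~\ref{thm:SuccessiveApproximation}; you have merely filled in the bookkeeping and made explicit the passage to condition~(4) via Theorem~\ref{thm:ImaginaryTwoStage}(2)(b). One small caveat on your last paragraph: the fact that \(\langle 243,7\rangle\) admits no proper cover as a \(3\)-tower group of an \emph{arbitrary} number field is not a consequence of the Shafarevich inequality (that bound depends on the unit rank of \(F\), and for a general base field it does not force \(d_2=2\)); rather it is the purely group-theoretic content of Proposition~\ref{prp:KernelTypeD5}(4) and Theorem~\ref{thm:ImaginaryTwoStage}(2)(b), so your citation is right but the Shafarevich gloss is misleading.
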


\begin{proof}
The claims follow from Proposition
\ref{prp:KernelTypeD5}
by applying the Successive Approximation Theorem
\ref{thm:SuccessiveApproximation}
of first order.
\end{proof}

%--------------------------------------------------------------------------------

\begin{corollary}
\label{cor:KernelTypeD5}
\noindent
Let \(F\) be a quadratic field which satisfies one of the equivalent conditions in Theorem
\ref{thm:KernelTypeD5}.
Then the length of the \(3\)-class tower of \(F\) is \(\ell_3{F}=2\).
The four unramified cyclic cubic extensions \(E_i/F\)
are absolutely dihedral of degree \(6\),
with torsionfree Dirichlet unit rank \(r\ge 2\),
and possess \(3\)-class towers of length \(\ell_3{E_i}=2\).
More precisely,
\(\mathrm{Cl}_3{E_i}\simeq C_3\times C_3\times C_3\)
and \(\mathrm{G}_3^\infty{E_i}\simeq\langle 3^4,13\rangle\) with relation rank \(d_2=5\)
for \(i\in\lbrace 1,3\rbrace\),
but \(\mathrm{Cl}_3{E_i}\simeq C_9\times C_3\)
and \(\mathrm{G}_3^\infty{E_i}\simeq\langle 3^4,3\rangle\) with relation rank  \(d_2=4\)
for \(i\in\lbrace 2,4\rbrace\).
\end{corollary}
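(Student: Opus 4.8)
The plan is to read off every assertion from the metabelian group $\langle 3^5,7\rangle$, combining the classification of its maximal subgroups in Proposition \ref{prp:KernelTypeD5} with the arithmetic constraint of Theorem \ref{thm:General}. First I would invoke Theorem \ref{thm:KernelTypeD5} to conclude that the hypothesis forces $\mathrm{G}_3^\infty{F}\simeq\langle 3^5,7\rangle$; since this group is non-abelian and metabelian, its derived length is $2$, so $\ell_3{F}=\mathrm{dl}(\mathrm{G}_3^\infty{F})=2$ exactly, and $F_3^{(\infty)}=F_3^{(2)}$.

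Next I would apply Theorem \ref{thm:General} with $p=3$, $n=2$, and $E=E_i$ for each of the four unramified cyclic cubic extensions $E_i/F$. Because $\ell_3{F}=2$ and $[E_i:F]=3$, the group $\mathrm{Gal}(E_{i,3}^{(\infty)}/E_i)=\mathrm{Gal}(F_3^{(\infty)}/E_i)$ is a subgroup of index $3$ of $\mathrm{G}_3^\infty{F}\simeq\langle 3^5,7\rangle$, hence one of its four maximal subgroups, and $\ell_3{E_i}\le 2$. To pin down which one, I would use that the abelianization of $\mathrm{Gal}(F_3^{(\infty)}/E_i)$ equals $\mathrm{Cl}_3{E_i}$ (Theorem \ref{thm:TauGrpFld}, i.e.\ Artin reciprocity), while Theorem \ref{thm:KernelTypeD5}(2) tells us that the four class groups $\mathrm{Cl}_3{E_i}$ form the multiset consisting of two copies of $C_3\times C_3\times C_3$ and two copies of $C_9\times C_3$, arranged in an order matching the representative $(1^3,21,1^3,21)$ of $\tau(F)$. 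On the group side, Proposition \ref{prp:KernelTypeD5}(3) gives $H_1\simeq H_3\simeq\langle 3^4,13\rangle$ with abelianization $C_3\times C_3\times C_3$, and $H_2\simeq H_4\simeq\langle 3^4,3\rangle$ with abelianization $C_9\times C_3$. Since $C_3\times C_3\times C_3$ and $C_9\times C_3$ are not isomorphic, matching abelianizations then forces $\mathrm{G}_3^\infty{E_i}\simeq\langle 3^4,13\rangle$ for $i\in\{1,3\}$ and $\mathrm{G}_3^\infty{E_i}\simeq\langle 3^4,3\rangle$ for $i\in\{2,4\}$; both groups are non-abelian metabelian, so in fact $\ell_3{E_i}=2$, not merely $\le 2$. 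The relation ranks $d_2=5$ for $\langle 3^4,13\rangle$ and $d_2=4$ for $\langle 3^4,3\rangle$ are invariants of the abstract groups, computed directly, and I would check that they are consistent with the Shafarevich inequality $d_2-d_1\le r$, where $d_1\in\{3,2\}$ is the $3$-rank of $\mathrm{Cl}_3{E_i}$ and $r$ is as determined next.

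It then remains to justify the claims about the fields $E_i$ themselves. Each $E_i$ lies in $F_3^{(1)}$, which is Galois over $\mathbb{Q}$ because it is canonically attached to the quadratic field $F$. The non-trivial automorphism of $F/\mathbb{Q}$ acts on $\mathrm{Cl}_3{F}\simeq C_3\times C_3$ by inversion, and (by Schur--Zassenhaus) $\mathrm{Gal}(F_3^{(1)}/\mathbb{Q})\simeq(C_3\times C_3)\rtimes C_2$ with that action; hence every subgroup of order $3$ is normal in $\mathrm{Gal}(F_3^{(1)}/\mathbb{Q})$, so $E_i/\mathbb{Q}$ is Galois of degree $6$ with $\mathrm{Gal}(E_i/\mathbb{Q})\simeq((C_3\times C_3)\rtimes C_2)/C_3\simeq S_3$, i.e.\ $E_i$ is absolutely dihedral of degree $6$; one may instead simply cite \cite[Prp.~4.1, p.~482]{Ma1}. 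If $F$ is real quadratic, then $F_3^{(1)}$, hence $E_i$, is totally real, so $r=r_1+r_2-1=5$; if $F$ is imaginary quadratic, then $E_i$ is totally complex of degree $6$, so $r=3-1=2$. In either case $r\ge 2$, which is what the statement asserts, and the Shafarevich bounds used above hold.

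I expect the only delicate point to be the bookkeeping in the identification step: one must make sure that the $\mathrm{S}_4$-ambiguity in the labelling of the $E_i$ (equivalently of the maximal subgroups $H_i$) respects the partition $\{1,3\}\cup\{2,4\}$, so that the pair $(\mathrm{Cl}_3{E_i},\mathrm{G}_3^\infty{E_i})$ attached to each index is well defined. This is automatic here, since the two candidate tower groups have non-isomorphic abelianizations and therefore can be distinguished using only the abelian type invariants $\tau(F)$, with no recourse to transfer-kernel data.
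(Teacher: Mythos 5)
Your argument is correct and follows exactly the route the paper intends: the paper's proof is the one-liner "consequence of Theorems \ref{thm:General} and \ref{thm:KernelTypeD5}, satisfying the Shafarevich theorem," and you have simply filled in the details (identifying $\mathrm{G}_3^\infty{F}\simeq\langle 3^5,7\rangle$, realizing the $\mathrm{G}_3^\infty{E_i}$ as the maximal subgroups classified in Proposition \ref{prp:KernelTypeD5}, matching abelianizations via Artin reciprocity, and checking the relation ranks against the Shafarevich bound). No gaps; your closing remark on the $\{1,3\}\cup\{2,4\}$ bookkeeping is the right observation and is indeed resolved by the non-isomorphic abelianizations.
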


\begin{proof}
This is a consequence of Theorems
\ref{thm:General}
and
\ref{thm:KernelTypeD5},
satisfying the Shafarevich theorem.
\end{proof}
%--------------------------------------------------------------------------------

\subsection{Application to dihedral fields}
\label{ss:Dihedral}
\noindent
We recall that a dihedral field \(E\) of degree \(6\)
is an absolute Galois extension \(E/\mathbb{Q}\)
with group \(\mathrm{Gal}(E/\mathbb{Q})=\langle\sigma,\tau\mid\sigma^3=\tau^2=1,\sigma\tau=\tau\sigma^{-1}\rangle\).
It is a cyclic cubic relative extension \(E/F\) of its unique quadratic subfield \(F=E^\sigma\),
and it contains three isomorphic, conjugate non-Galois cubic subfields \(L=E^\tau\), \(L^\sigma\), \(L^{\sigma^2}\).
The conductor \(c\) of \(E/F\) is a nearly squarefree positive integer with special prime factors,
and the discriminants satisfy the relations \(d_E=c^4d_F^3\) and \(d_L=c^2d_F\).
Here, we shall always be concerned with unramified extensions, characterized by the conductor \(c=1\),
and thus \(d_E=d_F^3\), a perfect cube, and equal \(d_L=d_F\).

%--------------------------------------------------------------------------------

\subsubsection{Totally complex dihedral fields}
\label{sss:TotallyComplexDihedral}
\noindent
The computational information on \(3\)-tower groups \(G:=\mathrm{G}_3^\infty{F}\)
of imaginary quadratic fields \(F\) in Table
\ref{tbl:ImaginaryTwoStage}
admits the purely theoretical deduction
of impressive statistics for \(3\)-tower groups \(S:=\mathrm{G}_3^\infty{E}\)
of totally complex dihedral fields \(E\) in Table
\ref{tbl:TotallyComplexDihedral}
by means of the Corollaries
\ref{cor:KernelTypeD10}
and
\ref{cor:KernelTypeD5}.
We use the crucial new insight that the groups \(S\triangleleft G\) are maximal subgroups of \(G\),
because the extensions \(E/F\) are unramified cyclic of degree \(3\).

%--------------------------------------------------------------------------------

\renewcommand{\arraystretch}{1.2}

\begin{table}[ht]
\caption{Frequencies of dihedral \(3\)-class tower groups \(S\) for \(-10^{24}<d_E<0\)}
\label{tbl:TotallyComplexDihedral}
\begin{center}
\begin{tabular}{|c|c|r||c|c|r||r|c||c|r|}
\hline
 \(G\simeq\)              & \(\tau^{(1)}{G}\) & abs. fr.    & \(S\simeq\)              & \(\tau^{(1)}{S}\) & abs. fr.     & \(\lvert d_E\rvert_{\text{min}}\) \\
\hline
 \(\langle 243,5\rangle\) & \(1^2\)           & \(83\,353\) & \(\langle 81,3\rangle\)  & \(21\)            & \(250\,059\) &  \(4\,027^3\)                     \\
 \(\langle 243,5\rangle\) & \(1^2\)           & \(83\,353\) & \(\langle 81,13\rangle\) & \(1^3\)           &  \(83\,353\) &  \(4\,027^3\)                     \\
 \(\langle 243,7\rangle\) & \(1^2\)           & \(41\,398\) & \(\langle 81,3\rangle\)  & \(21\)            &  \(82\,796\) & \(12\,131^3\)                     \\
 \(\langle 243,7\rangle\) & \(1^2\)           & \(41\,398\) & \(\langle 81,13\rangle\) & \(1^3\)           &  \(82\,796\) & \(12\,131^3\)                     \\
\hline
\end{tabular}
\end{center}
\end{table}

%--------------------------------------------------------------------------------

\subsubsection{Totally real dihedral fields}
\label{sss:TotallyRealDihedral}
\noindent
The computational information on \(3\)-tower groups \(G:=\mathrm{G}_3^\infty{F}\)
of real quadratic fields \(F\) in Table
\ref{tbl:RealTwoStage9}
admits the purely theoretical deduction
of impressive statistics for \(3\)-tower groups \(S:=\mathrm{G}_3^\infty{E}\)
of totally real dihedral fields \(E\) in Table
\ref{tbl:TotallyRealDihedral}
by means of Theorem
\ref{thm:MaxSbgCc1}
and Theorem
\ref{thm:General}.
Again, we use the innovative result that the groups \(S\triangleleft G\) are maximal subgroups of \(G\),
since the extensions \(E/F\) are unramified cyclic cubic.

%--------------------------------------------------------------------------------

\renewcommand{\arraystretch}{1.2}

\begin{table}[ht]
\caption{Frequencies of dihedral \(3\)-class tower groups \(S\) for \(0<d_E<10^{27}\)}
\label{tbl:TotallyRealDihedral}
\begin{center}
\begin{tabular}{|c|c|r||c|c|r||r|c||c|r|}
\hline
 \(G\simeq\)             & \(\tau^{(1)}{G}\) & abs. fr.     & \(S\simeq\)             & \(\tau^{(1)}{S}\) & abs. fr.     & \((d_E)_{\text{min}}\) \\
\hline
 \(\langle 81,7\rangle\) & \(1^2\)           & \(122\,955\) & \(\langle 27,3\rangle\) & \(1^2\)           & \(122\,955\) & \(142\,097^3\)         \\
 \(\langle 81,7\rangle\) & \(1^2\)           & \(122\,955\) & \(\langle 27,4\rangle\) & \(1^2\)           & \(245\,910\) & \(142\,097^3\)         \\
 \(\langle 81,7\rangle\) & \(1^2\)           & \(122\,955\) & \(\langle 27,5\rangle\) & \(1^3\)           & \(122\,955\) & \(142\,097^3\)         \\
\hline
\end{tabular}
\end{center}
\end{table}

%--------------------------------------------------------------------------------

\noindent
The first row of Table
\ref{tbl:TotallyRealDihedral}
reveals extensive realizations of the extraspecial group \(S=\langle 27,3\rangle\)
as \(3\)-tower group of dihedral fields.
This is the first time that \(S=\langle 27,3\rangle\) occurs as a \(3\)-tower group.
It is forbidden for quadratic fields,
and it did not occur for cyclic cubic fields and bicyclic biquadratic fields, up to now.

\begin{theorem}
\label{thm:NewRealization}
\textbf{(A new realization as \(3\)-tower group.)}
The extraspecial \(3\)-group \(S=\langle 27,3\rangle\) of coclass \(1\) and exponent \(3\)
occurs as \(3\)-class tower group \(\mathrm{G}_3^\infty{E}\) of totally real dihedral fields \(E\) of degree \(6\).
\end{theorem}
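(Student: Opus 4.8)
The plan is to obtain $\langle 27,3\rangle$ as a maximal subgroup of the $3$-class tower group of an explicit real quadratic field, by chaining together Theorem~\ref{thm:RealTwoStage}, Theorem~\ref{thm:General}, and Theorem~\ref{thm:MaxSbgCc1}.

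First I would invoke Theorem~\ref{thm:RealTwoStage} to pick a real quadratic field $F=\mathbb{Q}(\sqrt d)$ with $\mathrm{Cl}_3{F}\simeq C_3\times C_3$ and second order Artin pattern $\mathrm{AP}^{(2)}{F}=(\lbrack 1^2;(1^3,1^2,1^2,1^2)\rbrack,\lbrack 1;(2000)\rbrack)$, which forces $\mathrm{G}_3^\infty{F}\simeq\langle 81,7\rangle$; by Table~\ref{tbl:RealTwoStage9} the smallest admissible discriminant is $d=142\,097$. Since $\langle 81,7\rangle$ is non-abelian and metabelian, the tower has exact length $\ell_3{F}=\mathrm{dl}(\langle 81,7\rangle)=2$, so the hypothesis of Theorem~\ref{thm:General} is met with $p=3$ and $n=2$.

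Next I would apply Theorem~\ref{thm:General} to each of the four unramified cyclic cubic extensions $E/F$ inside $F_3^{(1)}$, i.e.\ to the four totally real dihedral fields of degree $6$ over $F$ (cf.\ \S\ \ref{ss:Dihedral}): it gives $\mathrm{Gal}(E_3^{(\infty)}/E)=\mathrm{Gal}(F_3^{(\infty)}/E)$, a subgroup of exact index $[E:F]=3$ of $\mathrm{G}_3^\infty{F}\simeq\langle 81,7\rangle$, hence a maximal subgroup. By Galois theory the map $E\mapsto\mathrm{Gal}(F_3^{(\infty)}/E)$ is a bijection of $\mathrm{Lyr}_1{F}$ onto the four maximal subgroups of $\langle 81,7\rangle$ (each of which contains the commutator, since the abelianization of $\langle 81,7\rangle$ is $C_3\times C_3$), so \emph{every} maximal subgroup of $\langle 81,7\rangle$ occurs as $\mathrm{G}_3^\infty{E}$ for one of these dihedral fields.

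Finally it remains to locate $\langle 27,3\rangle$ among those four maximal subgroups, and this is the step I expect to demand the most care. I would identify $\langle 81,7\rangle$ with the appropriate member of the parametrized family $G_a^n(z,w)$ of \S\ \ref{ss:Presentations} — concretely with $G_0^4(1,0)$, to be confirmed in MAGMA or by the power and commutator calculus of Lemma~\ref{lem:PowerRelations}, comparing $x^3$, $(xy)^3$ and $(xy^2)^3$ against abstract presentations exactly as in the proofs of Propositions~\ref{prp:KernelTypeD10} and~\ref{prp:KernelTypeD5} — and then read off the matching row of Table~\ref{tbl:MaxSbgCc1}, recalling from Table~\ref{tbl:MaxSbgExtraSpecial} that $G_0^3(0,0)$ is precisely the extraspecial $3$-group of coclass $1$ and exponent $3$, namely $\langle 27,3\rangle$. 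This pins down the maximal subgroup $H_2=\langle x,G^\prime\rangle\simeq\langle 27,3\rangle$; the bookkeeping to be carried out is matching the parameters $(a,z,w)$ to $\langle 81,7\rangle$ and tracking how its three non-abelian maximal subgroups split between $\langle 27,3\rangle$ and $\langle 27,4\rangle$. Combining the three steps, $\langle 27,3\rangle\simeq\mathrm{G}_3^\infty{E}$ for the totally real dihedral field $E$ of degree $6$ over $F=\mathbb{Q}(\sqrt{142\,097})$ corresponding to the maximal subgroup $H_2$; as $\langle 27,3\rangle$ is excluded over quadratic, cyclic cubic and bicyclic biquadratic base fields, this is its first realization as a $3$-class tower group.
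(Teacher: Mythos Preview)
Your proposal is correct and follows essentially the same route as the paper: combine Theorem~\ref{thm:General} with Theorem~\ref{thm:MaxSbgCc1}, applied to a real quadratic field $F$ with $\mathrm{G}_3^\infty{F}\simeq\langle 81,7\rangle$ (the paper phrases this via Table~\ref{tbl:TotallyRealDihedral}, which in turn rests on Theorem~\ref{thm:RealTwoStage}). Your version is simply more explicit, singling out the discriminant $d=142\,097$ and spelling out the identification $\langle 81,7\rangle\simeq G_0^4(1,0)$ that makes the row of Table~\ref{tbl:MaxSbgCc1} applicable.
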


\begin{proof}
The group \(S=\langle 27,3\rangle\) possesses the relation rank \(d_2{S}=4\).
According to the Shafarevich Theorem,
it is therefore excluded as \(3\)-tower group \(\mathrm{G}_3^\infty{F}\)
of both, imaginary and real quadratic fields \(F\).
However, the combination of Theorem
\ref{thm:MaxSbgCc1}
and Theorem
\ref{thm:General}
proves its occurrence as \(3\)-class tower group \(\mathrm{G}_3^\infty{E}\)
of totally real dihedral fields \(E\) of degree \(6\),
as visualized in Table
\ref{tbl:TotallyRealDihedral}.
\end{proof}

%--------------------------------------------------------------------------------

\begin{theorem}
\label{thm:Dihedral}
\textbf{(\(3\)-class tower groups of totally real dihedral fields.)}
Let \(F=\mathbb{Q}(\sqrt{d})\) be a real quadratic field
with \(3\)-class group \(\mathrm{Cl}_3{F}\simeq C_3\times C_3\)
and fundamental discriminant \(d>1\).
Suppose the second order Artin pattern
\(\mathrm{AP}^{(2)}{F}=(\tau^{(2)}(F),\varkappa^{(2)}(F))\)
is given by
the abelian type invariants \(\tau^{(2)}(F)=\lbrack 1^2;(2^2,1^2,1^2,1^2)\rbrack\)
and the transfer kernel type \(\varkappa^{(2)}(F)=\lbrack 1;(0000)\rbrack\).
Let \(E_2,E_3,E_4\) be the three unramified cyclic cubic relative extensions of \(F\)
with \(3\)-class group \(\mathrm{Cl}_3{E_i}\simeq C_3\times C_3\).

Then \(E_i/\mathbb{Q}\) is a totally real dihedral extension of degree \(6\),
for each \(2\le i\le 4\),
and the connection between the component \(\#\varkappa^{(3)}(F)_i=\#\ker(T_{E_i,F_3^{(1)}})\)
of the third order transfer kernel type \(\varkappa^{(3)}(F)\)
and the \(3\)-class tower group \(S_i=G_3^\infty{E_i}=\mathrm{Gal}((E_i)_3^{(\infty)}/E_i)\) of \(E_i\)
is given in the following way:
\begin{equation}
\label{eqn:Dihedral}
\begin{aligned}
 & \#\varkappa^{(3)}(F)_i=3 & \Longleftrightarrow & \quad S_i\simeq\langle 243,27\rangle & \text{ with } \varkappa(S_i)=(1000), \\
 & \#\varkappa^{(3)}(F)_i=9 & \Longleftrightarrow & \quad S_i\simeq\langle 243,26\rangle & \text{ with } \varkappa(S_i)=(0000).
\end{aligned}
\end{equation}
\end{theorem}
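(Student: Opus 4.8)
The plan is to reduce the assertion to a finite computation with three explicit finite $3$-groups of order $3^6$, using Corollary \ref{cor:RealTwoStage}, Theorem \ref{thm:General}, the Successive Approximation Theorem \ref{thm:SuccessiveApproximation}, and the Artin reciprocity law. First I would determine $G:=\mathrm{G}_3^\infty{F}$: the hypothesis $\mathrm{AP}^{(2)}{F}=(\lbrack 1^2;(2^2,1^2,1^2,1^2)\rbrack,\lbrack 1;(0000)\rbrack)$ is exactly condition~(3) of Corollary \ref{cor:RealTwoStage}, so $G\simeq\langle 729,i\rangle$ with $i\in\lbrace 99,100,101\rbrace$; each of these groups is metabelian, hence $\ell_3{F}=\mathrm{dl}(G)=2$. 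Applying Theorem \ref{thm:General} with $p=3$, $n=2$ to each unramified cyclic cubic extension $E_j/F$ ($1\le j\le 4$) yields $\ell_3{E_j}\le 2$ and $(E_j)_3^{(\infty)}=F_3^{(\infty)}$, so that $S_j:=\mathrm{G}_3^\infty{E_j}=\mathrm{Gal}(F_3^{(\infty)}/E_j)$ is the maximal subgroup of $G$ fixing $E_j$. Moreover each $E_j/\mathbb{Q}$ is a totally real dihedral sextic field: $F$ is real quadratic, $E_j/F$ is unramified cyclic cubic of conductor $1$ (hence unramified also at the infinite places), and the inversion action of $\mathrm{Gal}(F/\mathbb{Q})$ on $\mathrm{Cl}_3{F}$ normalizes every subgroup of order $3$, so $E_j/\mathbb{Q}$ is Galois with group $S_3$.

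Next I would locate the relevant maximal subgroups. By the Successive Approximation Theorem, $\tau^{(2)}{G}=\tau^{(2)}{F}$, so the multiset of abelianization types of the four maximal subgroups of $G$ is $\lbrace C_9\times C_9,\,C_3\times C_3,\,C_3\times C_3,\,C_3\times C_3\rbrace$; relabel so that $S_2,S_3,S_4$ are the three with $S_i^{ab}\simeq C_3\times C_3\simeq\mathrm{Cl}_3{E_i}$. Working through the polycyclic power-commutator presentations of $\langle 729,99\rangle$, $\langle 729,100\rangle$, $\langle 729,101\rangle$ (aided by the commutator calculus of Lemma \ref{lem:PowerRelations}), I would verify that in each of these three ambient groups every such $S_i$ is isomorphic either to $\langle 243,26\rangle$ or to $\langle 243,27\rangle$; since $S_i^{ab}$ is non-cyclic, $S_i$ is non-abelian and in fact $\ell_3{E_i}=2$.

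To connect the arithmetic quantity $\#\varkappa^{(3)}{F}_i=\#\ker(T_{E_i,F_3^{(1)}})$ with group theory, note that $F_3^{(1)}$, viewed over $E_i$, is an unramified cyclic cubic extension lying in $(E_i)_3^{(1)}$, hence an element of $\mathrm{Lyr}_1{E_i}$; the maximal subgroup of $S_i$ it corresponds to is $\mathrm{Gal}(F_3^{(\infty)}/F_3^{(1)})=G^\prime$, the commutator subgroup of the ambient group $G$ (indeed $S_i^\prime\le G^\prime\le S_i$ with $\lbrack S_i:G^\prime\rbrack=3$). By Theorem \ref{thm:TauGrpFld} applied with base field $E_i$, the $p$-class extension $T_{E_i,F_3^{(1)}}$ is identified with the Artin transfer $T_{S_i,G^\prime}\colon S_i/S_i^\prime\to G^\prime/G^{\prime\prime}$, so $\#\varkappa^{(3)}{F}_i=\#\ker(T_{S_i,G^\prime})$. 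If $S_i\simeq\langle 243,26\rangle$, whose transfer kernel type is $(0000)$, then every transfer from $S_i$ to a maximal subgroup, in particular to $G^\prime$, is the zero map, so $\#\ker(T_{S_i,G^\prime})=9$ and $\varkappa(S_i)=(0000)$; if $S_i\simeq\langle 243,27\rangle$, whose transfer kernel type is $(1000)$, then exactly one transfer from $S_i$ to a maximal subgroup has kernel of order $3$, and evaluating the mapping law \eqref{eqn:Transfer} inside the explicit presentation of $G$ shows that the distinguished normal subgroup $G^\prime$ is precisely that one, so $\#\ker(T_{S_i,G^\prime})=3$ and $\varkappa(S_i)=(1000)$. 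Since $3\ne 9$, the value $\#\varkappa^{(3)}{F}_i$ conversely determines $S_i$, which is the content of \eqref{eqn:Dihedral}.

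The main obstacle is this last matching: for each of $\langle 729,99\rangle$, $\langle 729,100\rangle$, $\langle 729,101\rangle$, one must identify which of the four pairwise non-conjugate maximal subgroups of $S_i\simeq\langle 243,27\rangle$ the commutator subgroup $G^\prime$ of the ambient group actually is, and confirm that it coincides with the single short-kernel position of the type $(1000)$; getting this bookkeeping right inside the concrete presentations is the delicate step. Everything else is already available — Corollary \ref{cor:RealTwoStage}, Theorem \ref{thm:General}, Theorem \ref{thm:TauGrpFld}, and power-commutator computations of the kind performed in Lemma \ref{lem:PowerRelations} — so the remainder is routine verification, easily cross-checked with MAGMA.
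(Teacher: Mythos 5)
Your reduction is sound and follows essentially the same route as the paper: pin down \(G=\mathrm{G}_3^\infty{F}\) via Corollary \ref{cor:RealTwoStage}(3), realize \(S_i=\mathrm{Gal}(F_3^{(\infty)}/E_i)\) as a maximal subgroup of \(G\) via Theorem \ref{thm:General}, translate \(\ker(T_{E_i,F_3^{(1)}})\) into \(\ker(T_{S_i,G^\prime})\) by Artin reciprocity, and finish by a finite computation inside the three candidate groups. The one real difference is that you propose to carry out that finite computation ad hoc from presentations of \(\langle 729,99\rangle\), \(\langle 729,100\rangle\), \(\langle 729,101\rangle\), whereas the paper's one-line proof invokes Theorem \ref{thm:MaxSbgCc1}, which already packages exactly this: the three candidates are the coclass-one groups \(G_1^6(0,w)\), \(w\in\lbrace -1,0,1\rbrace\), and the last three rows of Table \ref{tbl:MaxSbgCc1} (with \(n=6\)) show that \(H_2,H_3,H_4\) are each isomorphic to \(G_0^5(0,0)\) or \(G_0^5(0,1)\), i.e.\ to \(\langle 243,26\rangle\) or \(\langle 243,27\rangle\). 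The step you single out as the main obstacle --- deciding which maximal subgroup of \(S_i\simeq\langle 243,27\rangle\) the ambient commutator subgroup \(G^\prime\) is, and matching it to the unique short-kernel position of \((1000)\) --- is likewise settled by the normalization \eqref{eqn:MaximalSubgroups}: since \(\gamma_2(S_i)=\gamma_3(G)\) and \(\lbrack\gamma_2(G),\gamma_3(G)\rbrack\le\gamma_5(G)=\gamma_4(S_i)\), the subgroup \(G^\prime\) equals the two-step centralizer \(\chi_2(S_i)=H_1\), which is precisely the distinguished first position, where the transfer kernel type of \(\langle 243,27\rangle\) has its unique nonzero entry; no further bookkeeping inside the presentation of \(G\) is needed. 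Two minor blemishes: the inference \lq\lq \(S_i^{ab}\) non-cyclic, hence \(S_i\) non-abelian\rq\rq\ is not valid as stated (what you need is \(\lvert S_i\rvert=243>9=\lvert S_i^{ab}\rvert\), or simply that the two identified groups are non-abelian), and the transfer kernel types \((0000)\) and \((1000)\) of \(\langle 243,26\rangle\) and \(\langle 243,27\rangle\), which you quote as known, are themselves part of the finite verification rather than a given.
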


\begin{proof}
This theorem was expressed as a conjecture in
\cite{Ma17,Ma17b},
and is now an immediate consequence of Theorem
\ref{thm:MaxSbgCc1}.
\end{proof}

\begin{remark}
\label{rmk:Dihedral}
Recall that each unramified cyclic cubic relative extension \(E_i/F\), \(1\le i\le 4\),
gives rise to a dihedral absolute extension \(E_i/\mathbb{Q}\) of degree \(6\),
that is an \(S_3\)-extension
\cite[Prp. 4.1, p. 482]{Ma1}.
For the trailing three fields \(E_i\), \(2\le i\le 4\),
in the stable part of \(\tau^{(2)}(F)=\lbrack 1^2;(2^2,1^2,1^2,1^2)\rbrack\),
i.e. with \(\mathrm{Cl}_3{E_i}\simeq C_3\times C_3\),
we have constructed the unramified cyclic cubic extensions \(\tilde{E}_{i,j}/E_i\), \(1\le j\le 4\),
and determined the Artin pattern \(\mathrm{AP}^{(2)}{E_i}\) of \(E_i\),
in particular, the transfer kernel type of \(E_i\) in the fields \(\tilde{E}_{i,j}\)
of absolute degree \(18\).
The dihedral fields \(E_i\) of degree \(6\) share a common polarization
\(\tilde{E}_{i,1}=F_3^{(1)}\), the Hilbert \(3\)-class field of \(F\),
which is contained in the relative \(3\)-genus field \((E_i/F)^\ast\),
whereas the other extensions \(\tilde{E}_{i,j}\) with \(2\le j\le 4\)
are non-abelian over \(F\), for each \(2\le i\le 4\).
Our computational results underpin Theorem
\ref{thm:Dihedral}
concerning the infinite family of totally real dihedral fields
\(E_i\) for varying real quadratic fields \(F\).
\end{remark}

%\newpage

%--------------------------------------------------------------------------------

\section{Acknowledgements}
\label{s:Acknowledgements}

\noindent
The author gratefully acknowledges that his research was supported by the
Austrian Science Fund (FWF): P 26008-N25.

%\newpage

%--------------------------------------------------------------------------------

%--------------------------------------------------------------------------------

\end{document}